\begin{document}

\numberwithin{equation}{section}

{\theoremstyle{definition}\newtheorem{definition}{Definition}[section]
\newtheorem{notation}[definition]{Notation}
\newtheorem{remnot}[definition]{Remarks and notation}
\newtheorem{terminology}[definition]{Terminology}
\newtheorem{remark}[definition]{Remark}
\newtheorem{remarks}[definition]{Remarks}
\newtheorem{example}[definition]{Example}
\newtheorem{examples}[definition]{Examples}
\newtheorem{deflem}[definition]{Definition-Lemma}
\newtheorem{proposition}[definition]{Proposition}
\newtheorem{lemma}[definition]{Lemma}
\newtheorem{theorem}[definition]{Theorem}
\newtheorem{corollary}[definition]{Corollary}

\newcommand{\ci}{C^{\infty}}
\newcommand{\A}{\mathscr{A}}
\newcommand{\Cat}{\mathscr{C}}
\newcommand{\Dnc}{\mathscr{D}}
\newcommand{\E}{\mathscr{E}}
\newcommand{\F}{\mathscr{F}}
\newcommand{\T}{\mathscr{T}}
\newcommand{\gr}{\mathscr{G}}
\newcommand{\go}{\mathscr{G} ^{(0)}}
\newcommand{\hr}{\mathscr{H}}
\newcommand{\ho}{\mathscr{H} ^{(0)}}
\newcommand{\rgr}{\mathscr{R}}
\newcommand{\rgo}{\mathscr{R} ^{(0)}}
\newcommand{\lr}{\mathscr{L}}
\newcommand{\lo}{\mathscr{} ^{(0)}}
\newcommand{\gd}{\mathscr{G}^{\mathbb{R}^2}}
\newcommand{\gt}{\mathscr{G} ^{T}}
\newcommand{\I}{\mathscr{I}}
\newcommand{\Nb}{\mathscr{N}}
\newcommand{\Kom}{\mathscr{K}}
\newcommand{\ops}{\mathscr{O}}
\newcommand{\Pb}{\mathscr{P}}
\newcommand{\sw}{\mathscr{S}}
\newcommand{\Uo}{\mathscr{U}}
\newcommand{\Vo}{\mathscr{V}}
\newcommand{\Wo}{\mathscr{W}}
\newcommand{\Rr}{\mathbb{R}}
\newcommand{\Nat}{\mathbb{N}}
\newcommand{\C}{\mathbb{C}}
\newcommand{\src}{\mathscr{S}_{c}}
\newcommand{\cc}{C_{c}^{\infty}}
\newcommand{\cg}{C_{c}^{\infty}(\gr)}
\newcommand{\cgo}{C_{c}^{\infty}(\go)}
\newcommand{\ct}{C_{c}^{\infty}(\gr^T)}
\newcommand{\ca}{C_{c}^{\infty}(A\gr)}
\newcommand{\Un}{{U}^{(n)}}
\newcommand{\Du}{D_{\mathscr{U}}}

\def\to{\longrightarrow}

\def\L{\mathop{\wedge}}

  \renewcommand{\a}{\sigma}
  \renewcommand{\b}{\beta}
   \newcommand{\io}{\iota}
  
  \newcommand{\eps}{\epsilon}
  \renewcommand{\d}{\delta}
  \newcommand{\pa}{\partial}

  \newcommand{\ind}{{\bf  Index }}

  \newcommand{\CC}{{\mathbb C}}
  \newcommand{\RR}{{\mathbb R}}
  \newcommand{\ZZ}{{\mathbb Z}}
  \newcommand{\QQ}{{\mathbb Q}}
  \newcommand{\NN}{{\mathbb  N}}
    \newcommand{\KK}{{\mathbb  K}}

  \newcommand{\PP}{{\mathbb P}}

\def\gpd{\,\lower1pt\hbox{$\longrightarrow$}\hskip-.24in\raise2pt
             \hbox{$\longrightarrow$}\,}

\title{Twisted longitudinal  index theorem for foliations and wrong way functoriality}

  \author{ Paulo Carrillo Rouse}   
  \address{Institut de Math\'ematiques de Toulouse \\
Universit\'e de Toulouse III\\  118, route de Narbonne \\
 31400 Toulouse \\ France} 
 \email{paulo.carrillo@math.univ-toulouse.fr} 

  \author{Bai-Ling Wang}
  \address{Department of Mathematics\\
  Australian National University\\
  Canberra ACT 0200 \\
  Australia}
  \email{wangb@maths.anu.edu.au}

\bigskip
\everymath={\displaystyle}

\begin{abstract}
\noindent 
For a Lie groupoid $\gr$   with a twisting $\sigma$ (a $PU(H)$-principal bundle over $\gr$), we use the (geometric) deformation quantization techniques supplied by Connes tangent groupoids to define an analytic index morphism 
\[
\xymatrix{
K^{*}(A^*\gr, \pi^*\sigma )\ar[rr]^{\ind_{a,(\gr,\sigma)}} & & K ^{*}(\gr, \a) }
\]
in twisted K-theory. In the case the twisting is trivial we recover the analytic index morphism of the groupoid.

For a smooth foliated manifold with  twistings on the holonomy groupoid we prove the twisted analog of Connes-Skandalis longitudinal index theorem. When the foliation is given by fibers of a fibration, our index coincides with the one recently introduced  by Mathai-Melrose-Singer.

We construct the pushforward map in twisted K-theory associated to any smooth (generalized) map 
$f:W\longrightarrow M/F$ and a twisting $\sigma$ on the holonomy groupoid $M/F$, next we use the longitudinal index theorem to prove the functoriality of this construction. We generalize in this way the wrong way functoriality results of Connes-Skandalis when the twisting is trivial and of Carey-Wang  for manifolds.
\end{abstract}

\maketitle

\tableofcontents

\section*{Introduction}
This paper is part of a longstanding project where we aim to study and develop an index theory (\'a l'Atiyah-Singer and Connes-Skandalis) for foliations under the presence of twistings. 

The Atiyah-Singer index theorem in  \cite{AS},  which states that 
for an elliptic pseudo-differential operator on a compact manifold, the analytical index is equal to the topological index, has been generalized  in many other situations.  In  particular,  Connes and Skandalis established  an index theorem for longitudinal elliptic operators on foliated manifolds, \cite{CS}.   

For a smooth  foliated manifold $(M, F)$, there is a canonically defined 
$C^*$-algebra $C^*(M, F) = C^*(\gr_{M })$  which is the $C^*$-algebra of the underlying 
holonomy groupoid $\gr_{M}=M/F$.  An
elliptic pseudo-differential operator $D$  along the leaves of the foliation defines an element
\[
\ind_{a, M/F} (D) \in K_0(C^*(M, F)),
\]
in the K-theory of the $C^*$-algebra. This is done by using  pseudo-differential calculus for the holonomy groupoid $\gr_{M }$.  The element $\ind_a(D)$  is called the analytical index of $D$, which depends only on the longitudinal symbol of $D$.  Hence, one has the analytical index as a
group homomorphism  associated to the foliated manifold $(M, F)$
\[
\ind_{a, M/F} : K^0(F^*) \longrightarrow K_0(C^*(M, F)).
\]
The topological index for $(M, F)$ is obtained by applying the  Thom isomorphism and Bott periodicity  in topological K-theory to an auxiliary embedding 
\[
\io: M \longrightarrow \RR^{2n}.
\]
Let $N$ be the total space of the normal bundle to the leaves.  The Thom isomorphism implies
that 
\[
K^0(F^*) \cong K^0(N).
\]
 There is an open
neighborhood of the 0-section in $N$ which is homeomorphic to an open transversal $T$
of  the induced foliation $\tilde{F}$ on  $\tilde{M} = M\times \RR^{2n}$.  For a suitable open neighborhood   $\Omega$  of $T $ in $M\times \RR^{2n}$,   the $C^*$-algebra 
$C^*(\Omega, \tilde{F}|_\Omega)$ is Morita equivalent to $C_0(T)$ (the $C^*$-algebra of continuous functions on $T$  vanishing at infinity). So one has 
\[
K^0(N)\cong K^0(T) \cong K_0(C^*(\Omega, \tilde{F}|_\Omega)) \to K_0(C^*(\tilde{M}, \tilde{F}))
\]
where the last map is induced by the inclusion $C^*(\Omega, \tilde{F}|_\Omega) \subset 
C^*(\tilde{M}, \tilde{F})$. The Bott periodicity 
gives rise to
\[
K_0(C^*(\tilde{M}, \tilde{F})) = K_0(C^*(M, F) \otimes C_0(\RR^{2n}) ) \cong  
K_0(C^*(M, F)).
\]
This leads to the topological index for a  foliated manifold $(M, F)$
\[
\ind_{t, M/F} : K^0(F^*) \longrightarrow K_0(C^*(M, F)).
\]
In \cite{CS}, Connes and Skandalis showed 
that 
\[
\ind_{a, M/F} = \ind_{t, M/F}.
\]

There is an equivalent definition (\cite{HS}, \cite{Concg}, \cite{MP}) of
the analytical index   for a foliated manifold $(M, F)$ using  the Connes tangent groupoid
\[
\gr_{M }^T:=F\times \{0\} \bigsqcup \gr_{M}\times (0,1]\rightrightarrows M\times [0,1]
\]
of the holonomy groupoid $\gr_{M }$.   The definition works for any Lie groupoid  $\gr$. Let $A\gr$ be the associated Lie algebroid which can be viewed as a Lie groupoid given by 
the vector bundle structure, then one has 
the following exact sequence (\cite{HS})
\[
0 \rightarrow C^*(\gr \times (0,1]) \longrightarrow C^*(\gr^T)
\stackrel{ev_0}{\longrightarrow}
C^*(A\gr) \rightarrow 0.
\]
Monthubert-Pierrot in \cite{MP} show that
\[
\ind_{a, M/F} =(ev_1)_*\circ
(ev_0)_*^{-1} \circ \F := \ind_{a, \gr_{M }}:  K^0(F^*) \to  K_0(C^*(M, F))
\]
where $\F: K^0(F^*) = K^0(A^*\gr) \cong K_0(C^*(A\gr)) $ is the isomorphism 
induced by the  Fourier isomorphism $C^*(A\gr) \cong C_0(A^*\gr)$ and the morphisms $ev_t$ are the respective evaluations. 

 There is also an equivalent definition of the topological index  for a  foliated manifold $(M, F)$ (see \cite{Ca4}) by realizing the Thom inverse morphism of a complex vector bundle as the deformation index of some groupoid (The Thom groupoid, \cite{DLN} Theorem 6.2).  The resulting topological index is denoted by
 \[
  \ind_{t, \gr_{M }}:  K^0(F^*) \to  K_0(C^*(M, F))
\]
One has the following equivalences between these four index morphisms:
\[
\ind_{a, \gr_{M }} = \ind_{a, M/F}  = \ind_{t, M/F}   =   \ind_{t, \gr_{M }}.
\]

The approach taken in this paper is of geometrical nature, we are going to define our indices and propose the corresponding index theorem by using deformation groupoid techniques. We will leave the discussion of the pseudodifferential calculus behind  these for later works.

As we mentioned, the analytic index morphism associated to a Lie groupoid arises naturally from a geometric construction, that of the Connes tangent groupoid, \cite{HS,Concg,MP,NWX}. This groupoid encodes the deformation of the groupoid to the Lie algebroid. The use of deformation groupoids in index theory has proven to be very useful to prove index theorems, as well as to establish index theories in many different settings, \cite{HS,Concg,DLN,ANS,Ca4,Hig08,Ca5,CMont,vErp}. For example, even in the classic case of Atiyah-Singer theorem, Connes gives a very simple conceptual proof using the tangent groupoid (\cite{Concg}, II.5).

In this paper, we use these strict deformation quantization techniques to work out an index theory for foliation groupoids
in the presence of a twisting which generalizes the index theory \cite{W08}  for
a  smooth manifold with a twisting 
$\a: M \to K(\ZZ, 3)$, and Connes-Skandalis index
theory when the twisting is trivial. 

Following the construction of $KK$-elements developped in \cite{HS} (in particular section III) by Hilsum-Skandalis, we construct an analytic index morphism of any Lie groupoid $\gr\rightrightarrows M$ with a twisting given by a Hilsum-Skandalis' generalized  morphism $\xymatrix{\a: \gr \ar@{-->}[r] & PU(H)}$  
\[
\xymatrix{
K^{*}(A\gr, \pi^*\sigma )\ar[rr]^{\ind_{a, (\gr, \sigma)}} & & K ^{*}(\gr, \a). }
\]
This index  $\ind_{a, (\gr, \sigma)}$ takes values in the twisted K-theory of $(\gr, \a)$ (see \cite{TXL} for twisted K-theory for groupoids or definition \ref{twistedkth} below). When the twisting is trivial we recover the analytic index morphism of the groupoid defined by Monthubert and Pierrot in \cite{MP}.

Equivalently a twisting is given by a $PU(H)$-principal bundle over the units of the groupoid $\gr$ together with a compatible $\gr$-action. When the groupoid is a manifold the twistings we are considering here are equivalently given by Dixmier-Douady classes and the twisted K-theory is the   twisted K-theory for manifolds, \cite{ASeg,Kar08}. In general, our setting also includes, as particular cases, equivariant twisted K-theory and twisted K-theory for orbifolds, \cite{TXL}. 

For general Lie groupoids,  we  prove that our twisted index satisfies the following three properties (Propositions \ref{Botttwist}, \ref{opentwist} and \ref{Thomtwist}):
\begin{itemize}
\item[(i)] It is compatible with the Bott morphism,
$\it{i.e.}$, the following diagram is commutative
\[
\xymatrix{
K^*(A^*\gr,\pi^*\sigma_0) \ar[rrr]^-{\ind_{a,(\gr,\sigma)}} \ar[d]_-{Bott}
& & & K^*(\gr,\sigma) \ar[d]^-{Bott} \\
K^*(A^*\gr\times\Rr^2,\pi^*(\sigma\circ p)_0) \ar[rrr]_-{\ind_{a,(\gr\times \Rr^2,\sigma\circ p)}} & & & K^*(\gr\times \Rr^2,\sigma\circ p)
}
\]
where $p:\gr\times \Rr^2\longrightarrow \gr$ is the projection,  $\a_0$ and $(\a\circ p)_0$ are the induced twistings on the unit spaces of $\gr$ and $\gr\times \Rr^2$ respectively. 
\item[(ii)] Let $\hr\stackrel{j}{\hookrightarrow}\gr$ be an inclusion as an open subgroupoid.
The following diagram is commutative:
\[
\xymatrix{
K^*(A^*\hr,\pi^*(\sigma\circ j)_0) \ar[rr]^-{\ind_{a,(\hr,\sigma\circ j)}} \ar[d]_{j_!}
& & K^*(\hr,\sigma\circ j) \ar[d]^{j_!} \\
K^*(A^*\gr,\pi^*\sigma_0) \ar[rr]_{\ind_{a,(\gr,\sigma)}} & & K^*(\gr,\sigma).
}
\]
where the vertical maps are induced from the inclusions by open subgroupoids.
\item[(iii)] Let $N\longrightarrow T$ be a real vector bundle. Consider the product groupoid
$N\times_TN\rightrightarrows N$ This groupoid has a  Lie algebroid 
$N\oplus N\stackrel{\pi_N}{\longrightarrow} N$.  The groupoid $N\times_TN\rightrightarrows N$ is Morita equivalent to the identity groupoid $T\rightrightarrows T$. That is,  there is an isomorphism in the Hilsum-Skandalis category 
$$N\times_T N\stackrel{\mathscr{M}}{\longrightarrow}T.$$

In the presence of a twisting $\beta$ on the space 
$T$ the twisted index 
\[\xymatrix{
K^*(N\oplus N,\pi^*\beta)\ar[rrr]^{\ind_{a,(N\times_TN, \beta \circ \mathscr{M})}}&&&
K^*(N\times_TN, \beta \circ \mathscr{M})   \ar[r]_{\qquad \cong }
&
K^*(T,\beta)}\]
is the inverse (modulo a Morita equivalence and a Fourier isomorphism) of the Thom isomorphism (\cite{CW} \cite{Kar08}) in twisted K-theory $K^*(T,\beta)\stackrel{T_{\beta}}{\longrightarrow}K^*(N\oplus N,\pi_T^*\beta)$.

\end{itemize}

These properties are the analogs to the axioms stated in \cite{AS} and will allow us to prove a twisted index theorem for foliations and in general we can then study   twisted index theory for these objects. Indeed, for the case of regular foliations,  we extend the topological index of Connes-Skandalis (Definition \ref{twistedtopindex} below) to the twisted case and prove the following  twisted longitudinal index theorem.

\begin{theorem}
For a regular foliation $(M,F)$ with a twisting $\xymatrix{\a:  M/F  \ar@{-->}[r] & PU(H)}$ on the space of leaves,  we have the following equality of morphisms:
\begin{equation}\nonumber
\ind_{a,(M/F,\sigma)}=\ind_{t,(M/F,\sigma)}: K^*(F, \pi^*\a) \longrightarrow 
K^*(M/F, \a)
\end{equation}
\end{theorem}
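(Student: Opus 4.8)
The plan is to mimic the strategy of Connes and Skandalis, by which the topological index $\ind_{t,(M/F,\sigma)}$ is assembled from three elementary operations in twisted $K$-theory --- a Thom isomorphism, an open inclusion, and a Bott periodicity --- each of which is governed by one of the Propositions \ref{Botttwist}, \ref{opentwist} and \ref{Thomtwist}. Since these three propositions are exactly the twisted analogues of the Atiyah--Singer axioms that characterise the index, the argument reduces to transporting the analytic index $\ind_{a,(M/F,\sigma)}$ through the geometric construction that defines $\ind_{t,(M/F,\sigma)}$ and matching the result with Definition \ref{twistedtopindex}. Throughout, the tangent groupoid $\gr_M^{T}$ and its naturality under the three operations are the underlying mechanism, exactly as in Connes' groupoid proof of the Atiyah--Singer theorem.

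Concretely, I would first fix an auxiliary embedding $\io: M \to \Rr^{2n}$, form the product foliation $\tilde F$ on $\tilde M = M \times \Rr^{2n}$ (with the same leaves as $F$), and carry $\sigma$ to the pulled-back twisting $\sigma \circ p$ along the projection $p : \tilde M/\tilde F \to M/F$. Proposition \ref{Botttwist} then identifies $\ind_{a,(M/F,\sigma)}$ with $\ind_{a,(\tilde M/\tilde F,\, \sigma\circ p)}$ modulo the Bott morphism, which accounts for the Bott-periodicity step. Next, the tubular neighbourhood theorem applied to the embedded leaves realises the normal bundle $N$ to $F$ as a transversal $T$ sitting inside an open neighbourhood $\Omega \subset \tilde M$; writing $\hr = \tilde{\gr}|_{\Omega} \stackrel{j}{\hookrightarrow} \tilde{\gr}$ for the restricted holonomy groupoid, Proposition \ref{opentwist} relates $\ind_{a,(\hr,\, \sigma\circ j)}$ to $\ind_{a,(\tilde M/\tilde F,\, \sigma\circ p)}$ through the map $j_!$, accounting for the open inclusion $C^*(\Omega,\tilde F|_\Omega)\subset C^*(\tilde M,\tilde F)$. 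Finally, $\hr$ is Morita equivalent to a Thom groupoid $N\times_T N\rightrightarrows N$ over $T$ (the groupoid refinement of the Morita equivalence $C^*(\Omega,\tilde F|_\Omega)\sim C_0(T)$), whose Lie algebroid is the complex bundle $N\oplus N$; Proposition \ref{Thomtwist} identifies the analytic index of this groupoid with the inverse of the twisted Thom isomorphism. Concatenating these three identifications yields precisely the composite defining $\ind_{t,(M/F,\sigma)}$, hence the desired equality of morphisms.

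The hard part, and the genuine departure from the untwisted case, will be the coherent bookkeeping of the twisting through each stage: one must verify that $\sigma$ pulls back correctly under $\io$, that it is preserved as a class in the Hilsum--Skandalis category under the Morita equivalence $\hr \sim N\times_T N$, and that its restriction to $\Omega$ is compatible with the extension back to $\tilde M/\tilde F$. The most delicate point is the twisted Thom isomorphism itself. In the untwisted case one needs a $K$-orientation of the normal bundle, and in the twisted setting the absence of such an orientation would shift the twisting by the class $W_3(N)$. This obstruction is circumvented by working with the doubled bundle $N\oplus N = A(N\times_T N)$, which carries a canonical complex structure (it is the complexification of $N$) and is therefore $K$-oriented regardless of $N$; consequently the Thom isomorphism of Proposition \ref{Thomtwist} holds for $\beta = \sigma|_T$ with no correction term. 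Checking that this complex structure is compatible with the deformation-groupoid realisation of the Thom map --- which is where the Hilsum--Skandalis $KK$-theoretic constructions of \cite{HS} are essential --- is the technical heart of the argument.
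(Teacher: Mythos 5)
Your proposal is correct and follows essentially the same route as the paper: the analytic index is transported through the Bott, open-inclusion, and Thom stages of the topological index using Propositions \ref{Botttwist}, \ref{opentwist} and \ref{Thomtwist} respectively, with the compatibility of the twistings along the way (handled in the paper by the lemma preceding Definition \ref{twistedtopindex}) and the canonical complex structure on $N\oplus N^*$ playing exactly the roles you identify. The paper even records, as you anticipate, that any index satisfying these three properties must equal the topological one.
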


In fact, any index morphism for foliations with twistings satisfying the three properties  (i), (ii), (iii)  is equal to the twisted topological index by exactly the same proof.  There is a second definition of the twisted analytic index using projective pseudo-differential operators along the leaves which also satisfies the three properties.  We will return to this issue in a separate paper.

When the foliation is  given by a fibration $\pi: M\to B$  and the twisting is a torsion class
in $H^3(B, \ZZ)$, our index coincides with the one recently introduced in \cite{MMSI} by Mathai-Melrose-Singer since the topological indices of both papers are the same. We remark that they also give the correspondant cohomological formula which was also recently proved by supperconnection methods by Benameur-Gorokhovsky \cite{BG}. We will leave the study of Chern-Connes type formulas in the twisted foliation context for further works.

In this paper, we also construct, for every smooth (generalized) map $f:W\longrightarrow M/F$ and a twisting $\sigma$ on $M/F$, a pushforward morphism in twisted K-theory
\[
f_!:   K^{*+i}(W,  f^*\a_0 + o_{TW \oplus  f^*\nu_F})\longrightarrow K^i(M/F,\sigma),
\]
for an induced twisting on $W$. 
We then use the twisted  longitudinal index theorem to prove the functoriality of this construction. 

\begin{theorem}
The pushforward morphism is   functorial,  that is, if we have a composition of smooth maps
\begin{equation}
Z\stackrel{g}{\longrightarrow}W\stackrel{f}{\longrightarrow}M/F,
\end{equation}
and a twisting $\sigma:M/F\longrightarrow PU(H)$ then $(f\circ g)_! =f_! \circ g_! $
\end{theorem}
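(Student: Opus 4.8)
The plan is to deduce the functoriality of the pushforward from the functoriality of the \emph{topological} index, the passage between the two being supplied by the twisted longitudinal index theorem above. Recall that $f_!$ is assembled from three elementary operations: a Thom isomorphism for the orientation data $TW\oplus f^*\nu_F$, the (analytic) longitudinal index map of an auxiliary foliation obtained after an embedding $W\hookrightarrow \Rr^{N}$, and Bott periodicity. The first step is therefore to fix, for the two maps $g$ and $f$, auxiliary embeddings $Z\hookrightarrow \Rr^{N_1}$ and $W\hookrightarrow \Rr^{N_2}$, and to arrange that the auxiliary embedding of $Z$ used to define $(f\circ g)_!$ is built compatibly from these two, so that its normal bundle is, up to a stable trivial complement, the direct sum of the normal bundle $\nu_g$ of $g$ and the pullback $g^*\nu_f$ of the normal bundle of $f$.

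The second step is the bookkeeping of twistings. The target twisting of $g_!$ is chosen to be the source twisting of $f_!$, namely $f^*\a_0+o_{TW\oplus f^*\nu_F}$, and one must verify that the resulting source twisting of $f_!\circ g_!$ coincides with that of $(f\circ g)_!$. This reduces to the identity
\[
o_{TZ\oplus g^*TW}+g^*\bigl(o_{TW\oplus f^*\nu_F}\bigr)=o_{TZ\oplus (f\circ g)^*\nu_F},
\]
which follows from the additivity of the orientation twisting under direct sums together with the fact that it is $2$-torsion, so that the two copies of $o_{g^*TW}$ cancel; the degree shifts $i$ add up in parallel. Performing this identification with compatible trivialisations, and not merely up to isomorphism of twistings, is the most delicate piece of the bookkeeping.

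With these identifications in place, I would replace $\ind_{a,(M/F,\sigma)}$ by $\ind_{t,(M/F,\sigma)}$ by means of the index theorem, after which every ingredient of $f_!$, $g_!$ and $(f\circ g)_!$ is purely topological. Functoriality then follows from the three properties of the index: the multiplicativity of the Thom isomorphism under direct sums (Proposition \ref{Thomtwist}) factors the Thom map of the composite normal bundle as the Thom map of $\nu_g$ followed by that of $g^*\nu_f$; Proposition \ref{opentwist} accounts for the open inclusions coming from the tubular neighbourhoods; and Proposition \ref{Botttwist} accounts for the Bott factors. The hard part will be the commutation step: in $f_!\circ g_!$ the Thom isomorphism for $g^*\nu_f$ is applied \emph{after} the index map of $g$, whereas in $(f\circ g)_!$ it enters before a single index map. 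Sliding the Thom isomorphism past the longitudinal index is precisely the naturality of the topological index with respect to the Gysin construction, and it is here that the index theorem, together with the deformation-groupoid description of both maps, is indispensable; a final layer of care is required because $f$ and $g$ are generalized maps, so that the commutation must be checked modulo the Morita equivalences built into Proposition \ref{Thomtwist}.
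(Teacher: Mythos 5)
Your overall strategy --- prove the longitudinal index theorem first and then derive the wrong-way functoriality from the functoriality of the \emph{topological} index --- is exactly the route the paper takes (and is announced in the introduction as the main point of departure from Connes--Skandalis, who argue in the opposite order). Your ingredients (compatible embeddings, additivity and $2$-torsion of orientation twistings, Bott, open inclusions, and the commutation of the Thom isomorphism with the longitudinal index) correspond to the five-step diagram chase in the proof of Proposition \ref{CS47}, and you correctly identify that commutation as the place where the index theorem is indispensable.

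There is, however, a genuine gap. In the paper, $f_!$ for a general (generalized) map is \emph{defined} by choosing a factorization $f=q\circ j$ with $q:Z\to M/F$ a submersion and $j$ a map of manifolds, and setting $f_!=q_!\circ j_!$ (Definition \ref{Main:def}); there is no embedding $W\hookrightarrow \RR^{N}$ in the definition of the analytic pushforward itself --- the embedding enters only through the topological index. Consequently, to compare $(f\circ g)_!$ with $f_!\circ g_!$ one must compare \emph{different} factorizations of $f\circ g$: the one used to define $(f\circ g)_!$ and the composite of those used for $f$ and $g$. Arranging ``compatible embeddings'' does not dispose of this: you still need the independence of $f_!$ from the chosen factorization (Proposition \ref{functorial}), which rests on the Connes--Skandalis fibered-product construction together with two lemmas your sketch does not isolate --- the composition law $\tilde f_!\circ\tilde g_!=(\tilde f\circ\tilde g)_!$ for the \'etale pushforwards along transversals (Proposition \ref{CS45}, a purely analytic statement about immersions of central extensions that does \emph{not} go through the index theorem), and the interchange $f_!=(p_M)_!\circ g_!$ for a submersion of manifolds (Proposition \ref{CS47}, the twisted Lemma 4.7, which is where the Thom isomorphism is actually slid past the index). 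A minor but real misattribution: Proposition \ref{Thomtwist} is not a multiplicativity statement for Thom classes under direct sums; it identifies the analytic index of $N\times_T N$ with the inverse Thom isomorphism. The multiplicativity and naturality of the Gysin/Thom maps that your argument needs for spaces are imported from the Carey--Wang construction, not from Proposition \ref{Thomtwist}.
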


When $\sigma$ is trivial and $f: W\to M/F$ is K-oriented (that is  $TW\oplus \nu_F$ is K-oriented), our push-forward map  in Definition \ref{Main:def}  agrees with the one constructed in \cite{CS}. Also, when the foliation consists on a single leaf (hence a manifold) but the twisting is not necessarily trivial, we obtain otherwise the push-forward map defined in \cite{CW} by Carey-Wang.

We remark an important difference with respect to Connes-Skandalis approach in \cite{CS}. Indeed, we prove first, in a geometric way, the longitudinal index theorem and then we use it to prove the pushward functoriality. Recall that in \cite{CS} it is done conversely with analytic methods. Indeed, their fundamental technical result (as remarked by Connes-Skandalis), Lemma 4.7 in loc.cit., is proved by means of the longitudinal pseudodifferential calculus and the KK-elements associated to it. With this result they prove the functoriality of the pushforward and then the longitudinal index theorem. In this paper, the twisted analog of the lemma mentioned above (Proposition \ref{CS47}) is proved using the longitudinal index theorem. The strict deformation quantization approach to index theory for foliations (without twistings) have already used in this sense in previous works by the first author in \cite{Ca4,Ca5}.

 Some of the results presented in this paper were announced in \cite{CaWangCRAS}. There is however an important difference in the way of proving the pushforward functoriality. The arguments stated in that note for proving such a result were independent of the longitudinal index theorem. It is indeed possible to prove it directly by deforming some KK-elements, in the spirit of Theorem 6.2 in \cite{DLN} or Theorem \ref{Thomtwist} below. 
 
\bigskip

{\bf Acknowledgments.} We would like to thank two institutions for their hospitality while part of this work was being done: Max Planck Institut for Mathematics (Carrillo Rouse and Wang) and ANU (Carrillo Rouse). We would also like to thank Georges Skandalis for very useful discussions and comments. The first author would like to thank Jean Renault for enlightening conversations about groupoids, extensions and index theory that were very stimulating for this paper.

\section{Twistings  on Lie groupoids}

In this section, we review the notion of twistings on Lie groupoids and 
discuss some examples which appear in this paper.
Let us recall what a groupoid is:

\begin{definition}
A $\it{groupoid}$ consists of the following data:
two sets $\gr$ and $\go$, and maps
\begin{itemize}
\item[(1)]  $s,r:\gr \rightarrow \go$ 
called the source map and target map respectively,
\item[(2)]  $m:\gr^{(2)}\rightarrow \gr$ called the product map 
(where $\gr^{(2)}=\{ (\gamma,\eta)\in \gr \times \gr : s(\gamma)=r(\eta)\}$),
\end{itemize}
together with  two additional  maps, $u:\go \rightarrow \gr$ (the unit map) and 
$i:\gr \rightarrow \gr$ (the inverse map),
such that, if we denote $m(\gamma,\eta)=\gamma \cdot \eta$, $u(x)=x$ and 
$i(\gamma)=\gamma^{-1}$, we have 
\begin{itemize}
\item[(i).]$r(\gamma \cdot \eta) =r(\gamma)$ and $s(\gamma \cdot \eta) =s(\eta)$.
\item[(ii).]$\gamma \cdot (\eta \cdot \delta)=(\gamma \cdot \eta )\cdot \delta$, 
$\forall \gamma,\eta,\delta \in \gr$ whenever this makes sense.
\item[(iii).]$\gamma \cdot x = \gamma$ and $x\cdot \eta =\eta$, $\forall
  \gamma,\eta \in \gr$ with $s(\gamma)=x$ and $r(\eta)=x$.
\item[(iv).]$\gamma \cdot \gamma^{-1} =u(r(\gamma))$ and 
$\gamma^{-1} \cdot \gamma =u(s(\gamma))$, $\forall \gamma \in \gr$.
\end{itemize}
For simplicity, we denote a groupoid by $\gr \rightrightarrows \go $. A strict morphism $f$ from
a  groupoid   $\hr \rightrightarrows \ho $  to a groupoid   $\gr \rightrightarrows \go $ is  given
by  maps in 
\[
\xymatrix{
\hr \ar@<.5ex>[d]\ar@<-.5ex>[d] \ar[r]^f& \gr \ar@<.5ex>[d]\ar@<-.5ex>[d]\\
\ho\ar[r]^f&\go
}
\]
which preserve the groupoid structure, i.e.,  $f$ commutes with the source, target, unit, inverse  maps, and respects the groupoid product  in the sense that $f(h_1\cdot h_2) = f (h_1) \cdot f(h_2)$ for any $(h_1, h_2) \in \hr^{(2)}$.

\end{definition}

In  this paper we will only deal with Lie groupoids, that is, 
a groupoid in which $\gr$ and $\go$ are smooth manifolds, and $s,r,m,u$ are smooth maps (with s and r submersions, see \cite{Mac,Pat}). For two subsets $U$ and $V$ of $\go$,
 we use the notation
$\gr_{U}^{V}$ for the subset 
\[
\{ \gamma \in \gr : s(\gamma) \in U,\, 
r(\gamma)\in V\} .
\]

Lie groupoids generalize the notion of Lie groups. For Lie groupoids there is also a notion of Lie algebroid  playing the role of the Lie algebra in Lie theory. 

\begin{definition}[The Lie algebroid of a Lie groupoid]
Let $\gr \rightrightarrows  \go$ be a Lie groupoid. Denote by $A\gr$ the normal bundle associated to the inclusion $\go \subset \gr$. We refer to this vector bundle 
$\pi: A\gr \rightarrow \go$ as the Lie algebroid of $\gr$.
\end{definition} 

\begin{remark}
The Lie algebroid $A\gr$  of $\gr$ has a Lie algebroid structure. For see this it is usually more convenient to identify $A\gr$ with the restriction to the unit space of the vertical tangent bundle along the fiber of the source map $s: \gr\to \go$. This identification is not canonical, it corresponds indeed to a particular choice of splitting of the short exact sequence of vector bundles over $\go$.
$$0\rightarrow T\go\rightarrow T_{\go}\gr\rightarrow A\gr \rightarrow 0.$$
The Lie algebroid structure consists of
\[
(\pi: A\gr \rightarrow \go, dr: A\gr \to T\go, [\  ,\  ]_{A\gr})
\]
where the anchor map $dr$  is the restriction of the differential of $r$ to $A\gr$, and
the Lie bracket  $[\  ,\  ]_{A\gr}$ on the space of section of $A\gr$ is given the usual Lie bracket of the right 
invariant vector fields, restricted to $\go$. We will not make use on this paper of this additional structure.
\end{remark}

\subsection{Generalized morphisms: the Hilsum-Skandalis category}
 
Lie groupoids form a category with  strict  morphisms of groupoids. It is now classical in Lie groupoid's theory that the right category to consider is the one in which Morita equivalences correspond precisely to isomorphisms.  We review some basic definitions and properties of generalized morphisms between Lie groupoids, see \cite{TXL} section 2.1, or 
\cite{HS,Mr,MM} for more detailed discussions.

We first recall   the notions of  groupoid actions and principal bundles over groupoids. Given a Lie groupoid $\gr \rightrightarrows \go$,  
a right  $\gr$-bundle over a manifold $M$ is  a manifold $P$ equipped with maps 
 as in 
\[
\xymatrix{
 P \ar[d]_\pi   \ar[rd]_-\epsilon&\gr \ar@<.5ex>[d]_{r\ } \ar@<-.5ex>[d]^{\ s}  \\
M &\go, 
}
\]
together  with  a smooth right  $\gr$-action $\mu: P \times_{(\epsilon, r )} \gr \to P$, $\mu(p, \gamma) = p \gamma$,   such that  $\pi( p \gamma)  = \pi(p)$ and 
$p( \gamma_1\cdot \gamma_2) =   (p  \gamma_1)  \gamma_2 $ for any 
$(\gamma_1, \gamma_2) \in \gr^{(2)}$.  Here  $P \times_{(\epsilon, r )} \gr$  denotes  the fiber
product of $\epsilon: P\to \go$ and $r: \gr\to \go$. A left $\gr$-bundle can be defined similarly.
 A  $\gr$-bundle $P$ is called 
principal if
\begin{enumerate}
\item[(i)] $\pi$ is a surjective submersion, and
\item[(ii)]  the map  $P\times_{(\epsilon, r)} \gr \to P\times_M P$,  $(p, \gamma) \mapsto (p,  p\gamma)$  
is a diffeomorphism. 
\end{enumerate}
A principal $\gr$-bundle $P$  over $M$ is called  locally trivial if $P$ is isomorphic to 
a  principal $\gr$-bundle
\[
\dfrac{\bigsqcup_i \Omega_i \times \gr } {\{ (x, j, \gamma )  \sim (x,i, g_{ij}(x) \cdot \gamma)\}}
\]
defined by
 a $\gr$-valued  1-cocycle on $M$
\[
\{
g_{ij}:    \Omega_i \cap \Omega_j  \longrightarrow \gr| g_{ij}(x) \cdot g_{jk}(x) =  g_{ij} (x) \}
\]
with respect to a cover $\{\Omega_i\}$ of $M$. 



  Let $\gr \rightrightarrows \go$ and  
$\hr \rightrightarrows \ho$ be two Lie groupoids.  A principal $\gr$-bundle over $\hr$ 
is a right  principal $\gr$-bundle over $\ho$
which is also a left $\hr$-bundle over $\go$ such that the   the right $\gr$-action and the left 
$\hr$-action commute,  formally denoted by
\[
\xymatrix{
\hr \ar@<.5ex>[d]\ar@<-.5ex>[d]&P_f \ar@{->>}[ld] \ar[rd]&\gr \ar@<.5ex>[d]\ar@<-.5ex>[d]\\
\ho&&\go.
}
\]
A  $\gr$-valued 1-cocycle on  $\hr$ with respect to  an indexed open covering $\{\Omega_i\}_{i\in I}$ of $\ho$ is  a collection of smooth maps 
 $$f_{ij}:\hr_{\Omega_j}^{\Omega_i} \to\gr,$$
 satisfying the following cocycle condition:
$\forall \gamma \in \hr_{ij}$ and $\forall \gamma'\in \hr_{jk}$ with $s(\gamma)=r(\gamma')$, we have
\begin{center}
$f_{ij}(\gamma)^{-1}=f_{ji}(\gamma^{-1})$ and $f_{ij}(\gamma)\cdot f_{jk}(\gamma')=f_{ik}(\gamma\cdot \gamma').$
\end{center}
We will denote this data by $f=\{(\Omega_i,f_{ij})\}_{i\in I}$.  It is equivalent to  a 
 strict morphism of groupoids 
\begin{equation}\nonumber
\xymatrix{
\hr_{\Omega}=\bigsqcup_{i,j}\hr_{\Omega_j}^{\Omega_i} \ar@<.5ex>[d]\ar@<-.5ex>[d]\ar[rr]^-f &&\gr\ar@<.5ex>[d]\ar@<-.5ex>[d]\\
 \bigsqcup_{i}\Omega_{i}\ar[rr]&&\go.
}
\end{equation}
 Associated to  a $\gr$-valued 1-cocycle on  $\hr$, there is a canonical defined  principal $\gr$-bundle over $\hr$.  In fact, any principal $\gr$-bundle over $\hr$ is locally trivial (Cf. \cite{MM}).  
 


 
  
   We can now define generalized morphisms between two Lie groupoids.

\begin{definition}[Generalized morphism]\label{HSmorphisms}
Let $\gr \rightrightarrows \go$ and 
$\hr \rightrightarrows \ho$ be two Lie groupoids. 
A   right $\gr-$principal bundle over $\hr$, also called a generalized morphism (or Hilsum-Skandalis morphism),   
$f:  \xymatrix{\hr \ar@{-->}[r] &  \gr}$, is given by one of the three equivalent data:
\begin{enumerate}
\item A  locally trivial  right  principal $\gr$-bundle $P_f$  over  $\hr$ 
\[
\xymatrix{
\hr \ar@<.5ex>[d]\ar@<-.5ex>[d]&P_f \ar@{->>}[ld] \ar[rd]&\gr \ar@<.5ex>[d]\ar@<-.5ex>[d]\\
\ho&&\go.
}
\]

\item A 1-cocycle $f=\{(\Omega_i,f_{ij})\}_{i\in I}$ on $\hr$ with values in $\gr$.

\item A 
 strict morphism of groupoids 
\begin{equation}\nonumber
\xymatrix{
\hr_{\Omega}=\bigsqcup_{i,j}\hr_{\Omega_j}^{\Omega_i} \ar@<.5ex>[d]\ar@<-.5ex>[d]\ar[rr]^-f &&\gr\ar@<.5ex>[d]\ar@<-.5ex>[d]\\
 \bigsqcup_{i}\Omega_{i}\ar[rr]&&\go,
}
\end{equation}
for an open cover  $\Omega= \{\Omega_i\}$ of $\ho$.
 \end{enumerate}
 Two generalized morphisms $f$ and $g$ are called equivalent if the corresponding
  right $\gr$-principal bundles $P_f$ and $P_g$ over $\hr$ are isomorphic.
\end{definition}

As the name suggests,  generalized morphism  generalizes the notion of strict morphisms and can be composed. Indeed, if $P$ and $P'$ are generalized morphisms from $\hr$ to $\gr$ and from $\gr$ to $\lr$ respectively, then 
$$P\times_{\gr}P':=P\times_{\go}P'/(p,p')\sim (p\cdot \gamma, \gamma^{-1}\cdot p')$$
is a generalized morphism from $\hr$ to $\lr$.  Consider the category $Grpd_{HS}$ with objects Lie groupoids and morphisms given by equivalent classes of generalized morphisms. There is a functor
\begin{equation}\label{grpdhs}
Grpd \longrightarrow Grpd_{HS}
\end{equation}
where $Grpd$ is the strict category of groupoids. Then the composition is associative   in
$Grpd_{HS}$. 
Two groupoids are Morita equivalent if they are isomorphic in $Grpd_{HS}$.
  For example, 
given a Lie groupoid $\hr\rightrightarrows\ho$ and an open covering $\{\Omega_i\}_i$ of $\ho$, the canonical strict morphism of groupoids $\hr_{\Omega}\longrightarrow \hr$ is a Morita equivalence. 

 \subsection{Twistings on  Lie groupoids}
 
 In this paper,  we are only going to consider $PU(H)$-twistings on Lie groupoids 
where $H$ is an infinite dimensional, complex and separable
Hilbert space, and $PU(H)$ is the projective unitary group $PU(H)$  with the topology induced by the
norm topology on $U(H)$. 

\begin{definition}\label{twistedgroupoid}
A  twisting $\sigma$  on a   Lie  groupoid $\gr \rightrightarrows \go$  is given by  a generalized morphism 
\[ \xymatrix{
\a: \gr \ar@{-->}[r]  & PU(H).}
\]
Here $PU(H)$ is viewed  as a Lie groupoid with the unit space $\{e\}$. Two twistings 
$\a$ and $\a'$ are called equivalent if they are  equivalent as generalized morphisms.
\end{definition}
 
 So a twisting on a Lie groupoid $\gr$ is  a locally trivial  right  principal $PU(H)$-bundle over $\gr$
\begin{equation}\label{gpalpha}
\xymatrix{
\gr \ar@<.5ex>[d]\ar@<-.5ex>[d]&P_{\sigma} \ar[ld] \ar[rd]&PU(H) \ar@<.5ex>[d]\ar@<-.5ex>[d]\\
\go&&\{e\}.
}
\end{equation}
 Equivalently,   a twisting on  $\gr$ is given by
 a $PU(H)$-valued 1-cocycle on $\gr$
\[ 
g_{ij}:   \gr_{\Omega_j}^{\Omega^i} \longrightarrow PU(H)
\]
for an open cover $\Omega= \{\Omega_i\}$ of $\go$. That is,  a twisting $\sigma$ on a Lie  groupoid $ \gr $ is given by a strict morphism of groupoids 
\begin{equation}\label{galpha}
\xymatrix{
\gr_{\Omega}=\bigsqcup_{i,j}\gr_i^j \ar@<.5ex>[d]\ar@<-.5ex>[d]\ar[rr] &&PU(H)\ar@<.5ex>[d]\ar@<-.5ex>[d]\\
 \bigsqcup_{i}\Omega_{i}\ar[rr]&&\{e\}
}
\end{equation}
for an open cover  $\Omega= \{\Omega_i\}$ of $\go$.

 \begin{remark}
 The definition of generalized morphisms given in the last subsection was for two Lie groupoids. The group $PU(H)$ it is not precisely a Lie group but it makes perfectly sense to speak of generalized morphisms from Lie groupoids to this infinite dimensional "Lie" groupoid following exactly the same definition. Indeed, it is obviuos once one looks at equivalent datas (\ref{gpalpha}) and (\ref{galpha}) above (recall $PU(H)$ is considered with the topology induced by the norm topology on $U(H)$).
 \end{remark}
 
  \begin{remark} A twisting on a Lie groupoid $\gr \rightrightarrows M$ gives rise to an
  $U(1)$-central extension over the Morita equivalent groupoid $\gr_{\Omega}$ by
  pull-back the $U(1)$-central extension of $PU(H)$
  \[
  1\to U(1) \to U(H)  \to PU(H) \to 1.
  \]
  We will not call an $U(1)$-central extension of a Morita equivalent groupoid of 
  $ \gr$ a twisting on $\gr$ as in \cite{TXL}.  This is due to the fact that the associated principal  $PU(H)$-bundle might depend on the choice of Morita equivalence bibundles,  even though
  the isomorphism class of principal  $PU(H)$-bundle does not  depend on the choice of Morita equivalence bibundles.  It is important in applications to remember the  $PU(H)$-bundle 
  arising from a twisting, not just its isomorphism class. 
  \end{remark}

\begin{example} \label{example} We  give a list of  various twistings on  standard groupoids appearing  in this  paper.
 \begin{enumerate}
\item (Twisting on Lie groups) 
Let $G$ be a Lie group. Then $$G\rightrightarrows \{ e \}$$ is a Lie groupoid. A twisting on 
$G$ is given by a projective unitary representation 
\[
G\longrightarrow PU(H).
\]
\item (Twisting on manifolds)  Let $X$ be a $\ci$-manifold. We can consider the groupoid 
$$X\rightrightarrows X$$ where every morphism is the identity over $X$.  A twisting on $X$ is
given by a locally trivial principal $PU(H)$-bundle over $X$, or equivalently,  
 a twisting on $X$ is defined by a strict homomorphism 
\[
\xymatrix{
X_{\Omega}=\bigsqcup_{i,j} \Omega_{ij}  \ar@<.5ex>[d]\ar@<-.5ex>[d]\ar[rr] &&PU(H)\ar@<.5ex>[d]\ar@<-.5ex>[d]\\
 \bigsqcup_{i}\Omega_{i}\ar[rr]&&\{e\}.
}
\]
with respect to an open cover $\{\Omega_i\}$ of $X$, where $\Omega_{ij} =\Omega_i \cap\Omega_j$. 
Therefore, the restriction of a twisting $\sigma$ on a  Lie groupoid $\gr \rightrightarrows \go$
to its unit $\go$ defines a twisting  $\a_0$ on the manifold $\go$. 

\item (Orientation twisting) Let $X$ be a smooth  oriented manifold of dimension $n$. The tangent bundle $TX\to X$ defines
a natural generalized morphism 
\[
\xymatrix{
X\ar@{-->}[r] & SO(n).}
\]
Note that the fundamental representation of the Spin$^c$ group $Spin^c(n)$ gives rise to a commutative
diagram of Lie group homomorphisms
\[
\xymatrix{
Spin^c(n) \ar[d]   \ar[r] & U(\CC^{2^n}) \ar[d] \\
SO(n) \ar[r] & PU(\CC^{2^n}).}
\]
With a choice of inclusion $\CC^{2^n}$ into a Hilbert space $H$, we have a canoncial
twisting, called the orientation twisting  of $X$ associated to its tangent bundle, denoted by
\[\xymatrix{
o_{TX}:  X\ar@{-->}[r] & PU(H).}
\]

\item (Pull-back twisting) Given a twisting $\a$ on $ \gr$ and  for any generalized 
homomorphism $\phi: \hr \to \gr$, there is a pull-back twisting 
\[\xymatrix{
\phi^*\a:  \hr  \ar@{-->}[r]  & PU(H)}
\]
defined by the composition of $\phi$ and $\sigma$.  In particular, 
for a continuous map $\phi: X\to Y$, a twisting $\sigma$ on $Y$ gives a pull-back twisting 
$\phi^*\a$ on $X$. The principal $PU(H)$-bundle over $X$ defines by $\phi^*\a$ is
the pull-back of the  principal $PU(H)$-bundle on $Y$ associated to $\sigma$.

\item (Twisting on $G$-spaces)  Let $G$ be a Lie group acting by diffeomorphisms in a manifold $M$. The transformation groupoid associated to this action is
$$M \rtimes G\rightrightarrows M.$$ As a set $M\rtimes
G=M\times G $, and the maps are given by 
$s(x,g)=x\cdot g$, $r(x,g)=x$, the product given by 
$(x,g)\circ (x\cdot g,h)=(x,gh)$, the unit is $u(x)=(x,e)$ and with inverse $(x,g)^{-1}=(x\cdot g,g^{-1})$. The projection from $M \rtimes G $ to $G$ defines a strict homomorphism between groupoids
$M \rtimes G$ and $G$, then any projective representation $G\to PU(H)$ defines 
a twisting on $M \rtimes G$, which is the pull-back twisting from a twisting on  $G$.

\item (Twisting on pair groupoid)  Let $M$ be a $\ci$-manifold. We can consider the groupoid 
$$M\times M\rightrightarrows M$$ with $s(x,y)=y$, $r(x,y)=x$ and the product given by 
$(x,y)\circ (y,z)=(x,z)$.   Any locally trivial principal
$PU(H)$-bundle $P$  is trivial as the left action of  $M\times M\rightrightarrows M$ on $P$ gives a canonical trivialization 
\[
P \cong M \times PU(H).
\]
Hence,  any twisting on the pair 
groupoid $M \times M\rightrightarrows M$ is always trivial. 

\item (Twisting on fiber product groupoid)  Let $N\stackrel{p}{\rightarrow} M$ be a submersion. We consider the fiber product $N\times_M N:=\{ (n,n')\in N\times N :p(n)=p(n') \}$,which is a manifold because $p$ is a submersion. We can then take the groupoid 
$$N\times_M N\rightrightarrows N$$ which is  a subgroupoid of the pair groupoid 
$N\times N \rightrightarrows N$.  Note that this groupoid is in fact Morita equivalent to the groupoid $ M \rightrightarrows M$.  A twisting on  
 $N\times_M N\rightrightarrows N$ is  given by
a pull-back twisting from a   twisting on  $M$.

\item (Twisting on vector bundles) Let $\pi: E \to   X$ be a  vector bundle over a manifold $X$. We consider the groupoid 
$$E\rightrightarrows X$$ with  the source map $s(\xi)=\pi(\xi)$,  the target map $r(\xi)=\pi(\xi)$, the product   given by $\xi \circ \eta =\xi +\eta$. The unit is   zero section and the inverse is the additive inverse at each fiber. With respect to a trivialization cover  $\{E|_{\Omega_i} \cong \Omega_i \times V\}$ of $E$,  a  twisting on the groupoid 
$E\rightrightarrows X$ is given by  a $PU(H)$-valued cocycle
\[
g_{ij}:  \Omega_{ij}\times V \longrightarrow PU(H)
\]
satisfying  $g_{ij}(x, \xi) \cdot g_{jk}(x, \eta )  = g_{ik}(x, \xi+\eta)$. 
A  twisting on the groupoid 
$E\rightrightarrows X$ is a pull-back twisting $\pi^*\a$  from a twisting $\sigma$ on $X$ if 
 $g_{ij}:  \Omega_{ij}\times V \to PU(H)$ is constant in $V$.    Note that the pull-back twisting  agrees  with the 
  pull-back twisting  on $E$ by $\pi$ as a topological space.

\end{enumerate}
\end{example}
 
 In this paper, we will mainly deal  with the holonomy groupoids associated to regular foliations.  Let $M$ be a manifold of dimension $n$. Let $F$ be a subvector bundle of the tangent bundle $TM$.
We say that $F$ is integrable if  
$\ci(F):=\{ X\in \ci(M,TM): \forall x\in M, X_x\in F_x\}$ is a Lie subalgebra of $\ci(M,TM)$. This induces a partition  of $M$ in embedded submanifolds (the leaves of the foliation), given by the solution of integrating $F$. 

The holonomy groupoid of $(M,F)$ is a Lie groupoid  
$$\gr_{M } \rightrightarrows M$$ with Lie algebroid $A\gr=F$ and minimal in the following sense: 
any Lie groupoid integrating the foliation, that is having $F$ as Lie algebroid,  contains an open subgroupoid which maps onto the holonomy groupoid by a smooth morphism of Lie groupoids. 
The holonomy groupoid was constructed by Ehresmann \cite{Ehr} and Winkelnkemper \cite{Win} (see also  \cite{Candel}, \cite{God}, \cite{Pat}).
  
\begin{definition}\label{twistingleaves}
(Twisting on the space of leaves of a foliation)
Let $(M,F)$ be a regular foliation with holonomy groupoid $\gr_{M }$. A twisting on the space of leaves  is by definition a twisting on the holonomy groupoid $\gr_{M }$. We will often use the notation 
\[\xymatrix{
M/F \ar@{-->}[r] & PU(H)}
\]
for the correspondant generalized morphism.
\end{definition} 
Notice that by definition a twisiting on the spaces of leaves is a twisting on the base $M$ which admits a compatible action of the holonomy groupoid. It is however not enough to have a twisting on base which is leafwise constant, see for instance remark 1.4 (c) in \cite{HS}.

    \subsection{Twistings on  tangent groupoids}

In this subsection, we review the notion of Connes' tangent groupoids from deformation to the normal cone point of view, and discuss the induced twistings on tangent groupoids.

\subsubsection{Deformation to the normal cone}

The tangent groupoid is a particular case of a geometric construction that we describe here.

Let $M$ be a $\ci$ manifold and $X\subset M$ be a $\ci$ submanifold. We denote
by $\Nb_{X}^{M}$ the normal bundle to $X$ in $M$.
We define the following set
\begin{align}
\Dnc_{X}^{M}:= \Nb_{X}^{M} \times {0} \bigsqcup M \times \Rr^* 
\end{align} 
The purpose of this section is to recall how to define a $\ci$-structure in $\Dnc_{X}^{M}$. This is more or less classical, for example
it was extensively used in \cite{HS}.

Let us first consider the case where $M=\Rr^p\times \Rr^q$ 
and $X=\Rr^p \times \{ 0\}$ ( here we
identify  $X$ canonically with $ \Rr^p$). We denote by
$q=n-p$ and by $\Dnc_{p}^{n}$ for $\Dnc_{\Rr^p}^{\Rr^n}$ as above. In this case
we   have that $\Dnc_{p}^{n}=\Rr^p \times \Rr^q \times \Rr$ (as a
set). Consider the 
bijection  $\psi: \Rr^p \times \Rr^q \times \Rr \rightarrow
\Dnc_{p}^{n}$ given by 
\begin{equation}\label{psi}
\psi(x,\xi ,t) = \left\{ 
\begin{array}{cc}
(x,\xi ,0) &\mbox{ if } t=0 \\
(x,t\xi ,t) &\mbox{ if } t\neq0
\end{array}\right.
\end{equation}
whose  inverse is given explicitly by 
$$
\psi^{-1}(x,\xi ,t) = \left\{ 
\begin{array}{cc}
(x,\xi ,0) &\mbox{ if } t=0 \\
(x,\frac{1}{t}\xi ,t) &\mbox{ if } t\neq0
\end{array}\right.
$$
We can consider the $\ci$-structure on $\Dnc_{p}^{n}$
induced by this bijection.

We pass now to the general case. A local chart 
$(\Uo,\phi)$ in $M$ is said to be a $X$-slice if 
\begin{itemize}
\item[1)]$\phi : \Uo  \rightarrow U \subset \Rr^p\times \Rr^q$ is a diffeomorphsim. 
\item[2)]  If  $V =U \cap (\Rr^p \times \{ 0\})$, then
$\phi^{-1}(V) =   \Uo \cap X$ , denoted by $\Vo$.
\end{itemize}
With this notation, $\Dnc_{V}^{U}\subset \Dnc_{p}^{n}$ as an
open subset. We may define a function 
\begin{equation}\label{phi}
\tilde{\phi}:\Dnc_{\Vo}^{\Uo} \rightarrow \Dnc_{V}^{U} 
\end{equation}
in the following way: For $x\in \Vo$ we have $\phi (x)\in \Rr^p
\times \{0\}$. If we write 
$\phi(x)=(\phi_1(x),0)$, then 
$$ \phi_1 :\Vo \rightarrow V \subset \Rr^p$$ 
is a diffeomorphism. We set 
$\tilde{\phi}(v,\xi ,0)= (\phi_1 (v),d_N\phi_v (\xi ),0)$ and 
$\tilde{\phi}(u,t)= (\phi (u),t)$ 
for $t\neq 0$. Here 
$d_N\phi_v: N_v \rightarrow \Rr^q$ is the normal component of the
 derivative $d\phi_v$ for $v\in \Vo$. It is clear that $\tilde{\phi}$ is
 also a  bijection (in particular it induces a $C^{\infty}$ structure on $\Dnc_{\Vo}^{\Uo}$). 
Now, let us consider an atlas 
$ \{ (\Uo_{\sigma},\phi_{\sigma}) \}_{\sigma \in \Delta}$ of $M$
 consisting of $X-$slices. Then the collection $ \{ (\Dnc_{\Vo_{\sigma}}^{\Uo_{\sigma}},\tilde{\phi}_{\sigma})
  \} _{\sigma \in \Delta }$ is a $\ci$-atlas of
  $\Dnc_{X}^{M}$ (Proposition 3.1 in \cite{Ca4}).

\begin{definition}[Deformation to the normal cone]
Let $X\subset M$ be as above. The set
$\Dnc_{X}^{M}$ equipped with the  $C^{\infty}$ structure
induced by the atlas of  $X$-slices is called
 the deformation to the  normal cone associated  to   the embedding
$X\subset M$. 
\end{definition}


One important feature about the deformation to the normal cone is the functoriality. More explicitly,  let
 $f:(M,X)\rightarrow (M',X')$
be a   $\ci$ map   
$f:M\rightarrow M'$  with $f(X)\subset X'$. Define 
$ \Dnc(f): \Dnc_{X}^{M} \rightarrow \Dnc_{X'}^{M'} $ by the following formulas: \begin{enumerate}
\item[1)] $\Dnc(f) (m ,t)= (f(m),t)$ for $t\neq 0$, 

\item[2)]  $\Dnc(f) (x,\xi ,0)= (f(x),d_Nf_x (\xi),0)$,
where $d_Nf_x$ is by definition the map
\[  (\Nb_{X}^{M})_x 
\stackrel{d_Nf_x}{\longrightarrow}  (\Nb_{X'}^{M'})_{f(x)} \]
induced by $ T_xM 
\stackrel{df_x}{\longrightarrow}  T_{f(x)}M'$.
\end{enumerate}
 Then $\Dnc(f):\Dnc_{X}^{M} \rightarrow \Dnc_{X'}^{M'}$ is a $\ci$-map (Proposition 3.4 in \cite{Ca4}). In the language of categories, the deformation to the normal cone  construction defines a functor
\begin{equation}\label{fundnc}
\Dnc: \mathscr{C}_2^{\infty}\longrightarrow \mathscr{C}^{\infty} ,
\end{equation}
where $\mathscr{C}^{\infty}$ is the category of $\ci$-manifolds and $\mathscr{C}^{\infty}_2$ is the category of pairs of $\ci$-manifolds.

\begin{proposition}\label{twisting:normal} Given a twisting $\a: M \to PU(H)$ as  a principal $PU(H)$-bundle
over $M$,  there is a canonical twisting   on $ \Dnc_{X}^{M} $  which restricts to the pull-back twisting  $\pi^*(\a|_X)$ on $ \Nb_{X}^{M}$ by
the map $\pi: \Nb_{X}^{M} \to X$. 
\end{proposition}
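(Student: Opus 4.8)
The plan is to produce the twisting on $\Dnc_X^M$ by pulling back $\sigma$ along a canonical smooth map $p\colon \Dnc_X^M\to M$, and then to read off the restriction to $\Nb_X^M$ from the explicit form of $p$. Since a twisting on a manifold is just a locally trivial principal $PU(H)$-bundle and such bundles pull back along smooth maps (the pull-back twisting of Example \ref{example}(4)), the whole construction reduces to building $p$ canonically and computing its restriction to the $t=0$ face.

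First I would construct $p$ using the functoriality of the deformation to the normal cone. Observe that $\Nb_M^M$ is the zero bundle over $M$, so $\Dnc_M^M=M\times\Rr$ canonically. The identity map, viewed as a morphism of pairs $\mathrm{id}_M\colon (M,X)\to (M,M)$ (legitimate since $\mathrm{id}_M(X)\subset M$), lies in $\mathscr{C}^{\infty}_2$, and applying the functor $\Dnc$ of (\ref{fundnc}) yields a $\ci$-map
\[
\Dnc(\mathrm{id}_M)\colon \Dnc_X^M\longrightarrow \Dnc_M^M=M\times\Rr.
\]
By the defining formulas of $\Dnc(f)$ this map sends $(m,t)\mapsto(m,t)$ for $t\neq0$ and $(x,\xi,0)\mapsto (x,d_N(\mathrm{id})_x(\xi),0)=(x,0)$, the normal derivative into $\Nb_M^M=0$ killing $\xi$. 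Composing with the projection $M\times\Rr\to M$ gives the desired smooth map $p\colon \Dnc_X^M\to M$, with $p(m,t)=m$ for $t\neq0$ and $p(x,\xi,0)=x$ on the normal cone. Smoothness is exactly the content of Proposition 3.4 of \cite{Ca4} already invoked for the functor $\Dnc$, so no new analytic input is needed; this step carries essentially all the weight of the proof.

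I would then define the canonical twisting on $\Dnc_X^M$ to be the pull-back twisting $p^*\sigma$, that is, the pull-back of the principal $PU(H)$-bundle $P_\sigma$ (equivalently of the defining $PU(H)$-valued cocycle) along $p$; local triviality is preserved under pull-back. For the restriction statement, note that $p$ restricted to the face $\Nb_X^M\times\{0\}\subset\Dnc_X^M$ is precisely $\io_X\circ\pi$, where $\pi\colon\Nb_X^M\to X$ is the bundle projection and $\io_X\colon X\hookrightarrow M$ the inclusion. Hence $\left(p^*\sigma\right)\big|_{\Nb_X^M}=(\io_X\circ\pi)^*\sigma=\pi^*(\sigma|_X)=\pi^*(\a|_X)$, which is exactly the claimed pull-back twisting. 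The construction is canonical because $p$ itself is built from the identity via the functor $\Dnc$, with no choice of charts or splittings entering; the only point deserving care is the global smoothness of $p$, but as noted this is subsumed by the functoriality already established.
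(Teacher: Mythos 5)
Your proof is correct, and it takes a genuinely different (and in fact tighter) route than the paper's. The paper identifies $\Nb_{X}^{M}$ with a tubular neighborhood $\iota:\Nb_{X}^{M}\to M$ and argues via homotopy invariance of principal $PU(H)$-bundles: since $0_X\circ\pi$ is homotopic to the identity of $\Nb_{X}^{M}$, the bundle $\iota^*\sigma$ is \emph{isomorphic} to the one given by $\pi^*(\sigma|_X)$; the twisting on all of $\Dnc_{X}^{M}$ is left implicit (pull-back along the projection for $t\neq 0$, tubular pull-back at $t=0$). You instead apply the functor $\Dnc$ to the pair morphism $\mathrm{id}_M:(M,X)\to(M,M)$, use $\Dnc_{M}^{M}=M\times\Rr$ to obtain a single smooth map $p:\Dnc_{X}^{M}\to M$ with $p(m,t)=m$ and $p(x,\xi,0)=x$, and set the twisting to be $p^*\sigma$; the restriction to the $t=0$ face is then $(\io_X\circ\pi)^*\sigma=\pi^*(\sigma|_X)$ \emph{on the nose}, with no tubular neighborhood choice and no homotopy argument. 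What your approach buys is a construction that is manifestly canonical and globally smooth (the tubular-neighborhood prescription at $t=0$ does not even glue continuously with the projection on $M\times\Rr^*$, so the paper's argument really only establishes the isomorphism class of the restriction), and it is exactly the mechanism the paper itself uses later for Proposition \ref{twist:tangentgoid}; what the paper's argument buys is brevity and the observation, useful elsewhere, that $\iota^*\sigma$ and $\pi^*(\sigma|_X)$ agree up to isomorphism. The only point to make explicit in your write-up is that a twisting here is the principal bundle itself rather than its isomorphism class (cf.\ the remark following Definition \ref{twistedgroupoid}), which is precisely why producing the honest map $p$ is the right thing to do.
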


\begin{proof} Under the identification of the normal bundle  $\Nb_{X}^{M}$ with a tubular neighborhood of $X$ in $M$, we have the following diagram of maps
\[
\xymatrix{ \Nb_{X}^{M} \ar[rd]^\iota \ar[d]^\pi & \\
X\ar[r]^i  \ar@/{}^{2pc}/[u]^{0_X} & M
}
\]
where $0_X$ is the zero section of the normal bundle. Note that  $\pi\circ  0_X =Id_X$, $i= \iota\circ 0_X$, and $0_X\circ \pi$ is homotopic to the identity map
on $\Nb_{X}^{M}$, which imply that  
\[
\pi^* \circ i^* \a =  \pi^* \circ 0_X^*  \circ \iota^*\a  = (0_X \circ \pi)^* \circ \iota^*\a
\]
is homotopic to $\iota^*\a$. Hence, the principal $PU(H)$-bundle over 
$\Nb_{X}^{M}$
associated to $\iota^*\a$ is isomorphic to 
the pull-back principal $PU(H)$-bundle on  $\Nb_{X}^{M}$  associated to $\pi^* (\a|_X)$.
\end{proof}

\subsubsection{The tangent groupoid of a groupoid}

\begin{definition}[Tangent groupoid]
Let $\gr \rightrightarrows \go $ be a Lie groupoid. $\it{The\, tangent\,
groupoid}$ associated to $\gr$ is the groupoid that has 
\[
\Dnc_{\go}^{\gr} = \Nb_{\go}^{\gr} \times \{0\}\bigsqcup \gr\times \Rr^*
\]
 as the set of arrows and  $\go \times \Rr$ as the units, with:
 \begin{enumerate}
\item  $s^T(x,\eta ,0) =(x,0)$ and $r^T(x,\eta ,0) =(x,0)$ at $t=0$.
\item   $s^T(\gamma,t) =(s(\gamma),t)$ and $r^T(\gamma,t)
  =(r(\gamma),t)$ at $t\neq0$.
\item  The product is given by
  $m^T((x,\eta,0),(x,\xi,0))=(x,\eta +\xi ,0)$ and  $m^T((\gamma,t), 
  (\beta ,t))= (m(\gamma,\beta) , t)$ if $t\neq 0 $ and 
if $r(\beta)=s(\gamma)$.
\item The unit map $u^T:\go \rightarrow \gr^T$ is given by
 $u^T(x,0)=(x,0)$ and $u^T(x,t)=(u(x),t)$ for $t\neq 0$.
\end{enumerate}
We denote $\gr^{T}= \Dnc_{\go}^{\gr}$ and $A\gr =\Nb_{\go}^{\gr}  $ as a vector bundle over $\gr^{(0)}$. Then we have a family of Lie groupoids parametrized by $\Rr$, which itself is a Lie groupoid
\[
\gr^T = A\gr \times \{0\}  \bigsqcup \gr\times \Rr^* \rightrightarrows \go\times \Rr.
\]
\end{definition} 
As a consequence of the functoriality of the deformation to the normal cone,
one can show that the tangent groupoid is in fact a Lie
groupoid compatible with the Lie groupoid structures of $\gr$ and $A\gr$ (considered as a Lie groupoid with its vector bundle structure). 
Indeed, it is immediate that if we identify in a
canonical way $\Dnc_{\go}^{\gr^{(2)}}$ with $(\gr^T)^{(2)}$, then 
$$ m^T=\Dnc(m),\, s^T=\Dnc(s), \,  r^T=\Dnc(r),\,  u^T=\Dnc(u)$$
where we are considering the following pair morphisms:
\[\begin{array}{l}
m:( \gr^{(2)},\go)\rightarrow (\gr,\go ), \nonumber
\\
s,r:(\gr ,\go) \rightarrow (\go,\go),\nonumber 
\\
u:(\go,\go)\rightarrow (\gr,\go ).\nonumber
\end{array}
\]

\begin{example}\label{exgt}
 \begin{enumerate}
\item The tangent groupoid of a group: Let $G$ be a Lie group. We consider it as a groupoid with one element, $G\rightrightarrows \{ e\}$, as above. By definition $AG=T_eG$, {\it i.e.},
  The Lie algebroid coincides with the Lie algebra 
$\mathfrak{g}$
 of the group $G$. Hence, the tangent groupoid is
$$G^T:=\mathfrak{g}\times \{ 0\}\bigsqcup G\times \Rr^* \rightrightarrows G\times \Rr$$ 
  
  \item The tangent groupoid of a $\ci$-manifold: Let $M$ be a $\ci$-manifold. 
  We consider the associated pair groupoid
$M\times M \rightrightarrows M$. In this case, the Lie algebroid can be  identified with $TM$ 
and the tangent groupoid take the following form
$$\gr_M^T=TM \times \{ 0\} \bigsqcup M\times M \times (0,1] \rightrightarrows M \times \Rr .$$

\item  The Thom groupoid: Let $N\stackrel{p}{\rightarrow} T$ be a vector bundle over a smooth manifold 
$T$. Consider the fiberwise  pair groupoid over $T$,
\begin{equation}\label{NXTN}
N\times_T N \rightrightarrows N.
\end{equation}
The Lie algebroid is $N\oplus N$ as a vector bundle over $N$, Then the tangent groupoid, denoted by
$\gr_{\bf{Thom}}$, takes the following form
$$\gr_{\bf{Thom}}=N\oplus N \times \{ 0\} \bigsqcup
N\times_T N \times \Rr^* \rightrightarrows N \times \Rr.$$
See \cite{DLN} for a motivation for the name of this groupoid. 

\item Tangent groupoid of a holonomy groupoid:  Let $\gr_{M/F}$ be the holonomy groupoid
of a foliated manifold $M/F$. Then the tangent groupoid is given by
\[
\gr^T_{M/F} = F\times\{0\} \bigsqcup \gr_{M/F} \rightrightarrows M \times \Rr.
\]
\end{enumerate}

\end{example}


The following proposition could be used alternatively to construct the twisted index morphism and to develop the twisted index theory. However in the sequel we will rather use the central extension approach because the proofs are simpler, and because we obtain naturally the twisted index theory as a factor of an $S^1$-equivariant index theory. It gives though a nice geometric idea of the strict deformation quantization methods.

\begin{proposition}\label{twist:tangentgoid}
Given a twisting $\sigma$ on  a Lie groupoid $\gr$, there is a canonical 
twisting $\a^T$ on its tangent groupoid $\gr^T$ such that 
\[
\a^T|_{A\gr} =  \pi^* (\a|_{\go})
\]
where $\pi: A\gr \to \go$ is the projection.
\end{proposition}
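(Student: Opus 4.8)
The plan is to realize the tangent groupoid as a deformation to the normal cone and then invoke Proposition \ref{twisting:normal} applied not to the manifold-level data but to the groupoid-level data, being careful that the twisting is compatible with the groupoid structure (i.e.\ that it is a \emph{genuine} twisting on $\gr^T$, not merely a twisting on its arrow manifold). Concretely, recall that $\gr^T = \Dnc_{\go}^{\gr}$ as a manifold, and that a twisting on $\gr$ is encoded (via Definition \ref{twistedgroupoid} and diagram \eqref{galpha}) by a strict morphism of groupoids from a Morita-equivalent covering groupoid $\gr_\Omega$ to $PU(H)$, equivalently by a $PU(H)$-valued $1$-cocycle $g_{ij}$ on $\gr$ subordinate to an open cover $\Omega=\{\Omega_i\}$ of $\go$. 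The idea is to apply the deformation-to-the-normal-cone functor of \eqref{fundnc}, \eqref{grpdhs} to this \emph{cocycle} data so as to produce a cocycle on $\gr^T$.

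First I would promote the cocycle $g_{ij}:\gr_{\Omega_j}^{\Omega_i}\to PU(H)$ to a pair-morphism, by viewing each $(\gr_{\Omega_j}^{\Omega_i},\,\Omega_i\cap\Omega_j)$ as an object of $\mathscr{C}_2^\infty$ and each $g_{ij}$ (together with its base map) as a morphism of pairs into $(PU(H),\{e\})$. Since $\Dnc$ is functorial on the category of pairs and is compatible with products, sources, targets and units exactly as in the verification $m^T=\Dnc(m)$, $s^T=\Dnc(s)$, etc.\ given above for the tangent groupoid itself, applying $\Dnc$ to the strict groupoid morphism $\gr_\Omega\to PU(H)$ yields a strict groupoid morphism $\Dnc(\gr_\Omega)\to \Dnc(PU(H))$. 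Here $\Dnc(\gr_\Omega)$ is canonically the open covering groupoid $(\gr^T)_{\Omega\times\Rr}$ of the tangent groupoid, and $\Dnc(PU(H))=\mathfrak{pu}(H)\times\{0\}\sqcup PU(H)\times\Rr^*$ retracts onto $PU(H)$. Composing with this retraction produces a $PU(H)$-valued $1$-cocycle on $\gr^T$, hence, by \eqref{galpha} read backwards, a twisting $\a^T$ on $\gr^T$. By construction $\a^T$ restricts at $t\ne 0$ to $\a$ on each copy $\gr\times\{t\}$.

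It then remains to identify the restriction $\a^T|_{A\gr}$ at $t=0$. By the definition of $\Dnc$ on morphisms (formula $\Dnc(f)(x,\xi,0)=(f(x),d_Nf_x(\xi),0)$), the $t=0$ fiber of the deformed cocycle is governed by the \emph{normal derivative} of $g_{ij}$ along $\go$. The restriction $\a|_{\go}=\a_0$ is the twisting on the unit manifold described in Example \ref{example}(2), and the same homotopy argument used in Proposition \ref{twisting:normal}---that $0_{\go}\circ\pi$ is homotopic to the identity on $A\gr=\Nb_{\go}^{\gr}$, so that $\iota^*\a$ and $\pi^*(\a|_{\go})$ give isomorphic principal $PU(H)$-bundles---shows that the induced twisting on the $t=0$ slice is exactly $\pi^*(\a|_{\go})$. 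Thus $\a^T|_{A\gr}=\pi^*(\a|_{\go})$, as claimed.

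I expect the main obstacle to be the bookkeeping in the second paragraph: verifying that applying $\Dnc$ to the cocycle data really yields a \emph{strict groupoid morphism} on the tangent groupoid, i.e.\ that the $\Dnc$-deformed maps still satisfy the cocycle identities $f_{ij}\cdot f_{jk}=f_{ik}$ after deformation, and that the resulting bundle is still locally trivial and principal in the sense of Definition \ref{HSmorphisms}. This is where one must use that $\Dnc$ respects composition (so that the cocycle identity, which is an equation among composites $m\circ(g_{ij}\times g_{jk})=g_{ik}$, is preserved by $\Dnc$), together with the identification $(\gr^T)^{(2)}\cong\Dnc_{\go}^{\gr^{(2)}}$ already recorded in the text. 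Everything else---the $t\ne0$ restriction and the homotopy computation at $t=0$---is routine once this compatibility is in place.
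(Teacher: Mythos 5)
Your overall strategy (deform the defining data of the twisting through the functor $\Dnc$) is the same as the paper's, but the paper deforms the \emph{principal bundle} picture while you deform the \emph{cocycle} picture, and in the cocycle picture your key step fails. The problem is the claim that each $g_{ij}\colon\gr_{\Omega_j}^{\Omega_i}\to PU(H)$ is a morphism of pairs $(\gr_{\Omega_j}^{\Omega_i},\Omega_i\cap\Omega_j)\to(PU(H),\{e\})$. That would require $g_{ij}(u(x))=e$ for every unit arrow $u(x)$ with $x\in\Omega_i\cap\Omega_j$; but for $i\neq j$ the values $g_{ij}(u(x))$ are precisely the \v{C}ech $1$-cocycle on $\go$ defining the restricted twisting $\a_0=\a|_{\go}$ (the cocycle identity only forces $g_{ii}\circ u=e$ and $g_{ij}(u(x))=g_{ji}(u(x))^{-1}$). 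So your pair morphisms do not exist unless $\a_0$ is given by the trivial cocycle. The same defect infects the identification ``$\Dnc(\gr_\Omega)\cong(\gr^T)_{\Omega\times\Rr}$'': the unit space of $\gr_\Omega$ meets only the diagonal components $\gr_{\Omega_i}^{\Omega_i}$, so the $t=0$ fibre of $\Dnc_{\bigsqcup_i\Omega_i}^{\gr_\Omega}$ is $\bigsqcup_i A\gr|_{\Omega_i}$, whereas the covering groupoid of $\gr^T$ has $t=0$ part $\bigsqcup_{i,j}A\gr|_{\Omega_{ij}}$; these are not even Morita equivalent at $t=0$. Finally, even granting the construction, composing with a retraction $\Dnc(PU(H))\to PU(H)$ that collapses $\mathfrak{pu}(H)\times\{0\}$ to $e$ would make the $t=0$ cocycle \emph{trivial}, and no homotopy argument in the style of Proposition \ref{twisting:normal} (which concerns isomorphism of $PU(H)$-bundles over a space, not equivalence of twistings on the groupoid $A\gr\rightrightarrows\go$) can turn the trivial cocycle into $\pi^*\a_0$.

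The repair is small but changes the shape of the argument: take the target pair to be $(PU(H),PU(H))$, whose deformation to the normal cone is simply $PU(H)\times\Rr$. Then $\Dnc(g_{ij})\colon\Dnc_{\Omega_{ij}}^{\gr_{\Omega_j}^{\Omega_i}}\to PU(H)\times\Rr$ is defined for every $(i,j)$, the cocycle identities persist by functoriality of $\Dnc$ under composition, and at $t=0$ the first component is literally $(x,\xi)\mapsto g_{ij}(u(x))$, i.e.\ $\pi^*\a_0$ on the nose --- no retraction and no homotopy needed. This is exactly the cocycle shadow of what the paper actually does: it regards the twisting as a principal $PU(H)$-bundle $P_\a$ over $\go$ with a compatible $\gr$-action, observes that the units act as identities so that the action is a morphism of pairs $(\gr\times_M P_\a,\,M\times_M P_\a)\to(P_\a,P_\a)$ --- note the target pair is $(P_\a,P_\a)$, with $\Dnc$ equal to $P_\a\times\Rr$ --- and applies $\Dnc$ to obtain the $\gr^T$-equivariant principal bundle directly.
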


\begin{proof}
The twisting $\sigma$ on $\gr$ is given by a $PU(H)$-principal bundle $P_{\sigma}$ with a compatible left action of $\gr$. By definition of the groupoid action, the units of the groupoid act as identities, hence we can consider the action as a $\ci$-morphism in the category of $\ci$-pairs: 
$$(\gr\times_MP_{\sigma},M\times_MP_{\sigma})\longrightarrow (P_{\sigma},P_{\sigma}).$$
We can then apply the deformation to the normal cone functor to obtain the desire $PU(H)$-principal bundle with a compatible $\gr^T$-action, which gives the desired twisting.
\end{proof}

\subsubsection{The tangent groupoid of a groupoid immersion}\label{tgrpdimm}

We  briefly discuss here the tangent groupoid of an immersion of groupoids which is called  the normal groupoid in   \cite{HS}.

Consider an immersion of Lie groupoids $\gr_1\stackrel{\varphi}{\hookrightarrow}\gr_2$. Let 
$\gr^N_1=\Nb_{\gr_1}^{\gr_2}$ be the total space of the normal bundle to $\varphi$, and $(\gr_{1}^{(0)})^N$ be the total space of the normal bundle to $\varphi_0: \go_1 \to \go_2$. The deformation to the normal bundle construction allows us to consider a $C^{\infty}$ structure on 
$$\gr_{\varphi}:=\gr^N_1\times \{0\}\bigsqcup \gr_2\times \mathbb{R}^*,$$
such that $\gr^N_1\times \{0\}$ is a closed saturated submanifold and so $\gr_2\times \mathbb{R}^*$ is an open submanifold.

\begin{remark}\label{normalgrpd}
Let us observe that $\gr^N_1\rightrightarrows (\gr_{1}^{(0)})^N$ is a Lie groupoid with the following structure maps:
\begin{enumerate}
\item The source map is the derivation in the normal direction 
$d_Ns:\gr^N_1\rightarrow (\gr_{1}^{(0)})^N$ of the source map (seen as a pair of maps) $s:(\gr_2,\gr_1)\rightarrow (\gr_{2}^{(0)},\gr_{1}^{(0)})$ and similarly for the target map.
\item The product map is the derivation in the normal direction $d_Nm:(\gr^N_1)^{(2)}
\rightarrow \gr_1^N$ of the product map   $m:(\gr_{2}^{(2)},\gr_{1}^{(2)})\rightarrow (\gr_2,\gr_1)$
\end{enumerate}
\end{remark}

The following results are   an immediate consequence of the functoriality of the deformation to the normal cone construction.

\begin{proposition}[Hilsum-Skandalis, \cite{HS}]\label{HSimmer}
Let $\gr_{\varphi_0}:=(\gr_{1}^{(0)})^N\times \{0\}\bigsqcup \gr_{2}^{(0)}\times \mathbb{R}^*$ be the deformation to the normal cone of  the  pair $(\gr_{2}^{(0)},\gr_{1}^{(0)})$. The groupoid
\begin{equation}
\gr_{\varphi}\rightrightarrows\gr_{\varphi_0}
\end{equation}
with structure maps compatible  with the ones of the groupoids $\gr_2\rightrightarrows \gr_{2}^{(0)}$ and $\gr_1^N\rightrightarrows (\gr_{1}^{(0)})^N$, is a Lie groupoid with $C^{\infty}$-structures coming from  the deformation to the normal cone.
\end{proposition}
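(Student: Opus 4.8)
The plan is to obtain every piece of the groupoid structure on $\gr_{\varphi}\rightrightarrows\gr_{\varphi_0}$ by applying the deformation functor $\Dnc$ of (\ref{fundnc}) to the structure maps of the immersion $\varphi$, regarded as morphisms of pairs. First I would record that, because $\varphi$ is a morphism of Lie groupoids, it intertwines the structure maps of $\gr_1$ and $\gr_2$; consequently, viewing $\gr_1$ as a submanifold of $\gr_2$ via $\varphi$ (and likewise on units and on composable pairs), the source, target, unit and inverse of $\gr_2$ carry the distinguished submanifolds into one another. Explicitly one obtains morphisms of pairs
\[
s, r : (\gr_2, \gr_1) \to (\gr_2^{(0)}, \gr_1^{(0)}), \quad u : (\gr_2^{(0)}, \gr_1^{(0)}) \to (\gr_2, \gr_1),
\]
together with an inverse morphism $(\gr_2,\gr_1) \to (\gr_2,\gr_1)$, which is exactly the pair data underlying Remark \ref{normalgrpd}.

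Next I would treat the multiplication. The one subtlety is that the product is only defined on composable pairs, so before applying $\Dnc$ I must identify $(\gr_{\varphi})^{(2)}$ canonically with $\Dnc_{\gr_1^{(2)}}^{\gr_2^{(2)}}$, precisely the analogue of the identification $\Dnc_{\go}^{\gr^{(2)}} \cong (\gr^T)^{(2)}$ used to build the tangent groupoid. Granting this, multiplication becomes a morphism of pairs $m : (\gr_2^{(2)}, \gr_1^{(2)}) \to (\gr_2, \gr_1)$, and $\Dnc(m)$ supplies the product on $\gr_{\varphi}$. Applying $\Dnc$ to all five structure maps and using that $\Dnc$ is a functor into the category $\mathscr{C}^{\infty}$ of $\ci$-manifolds (Proposition 3.4 of \cite{Ca4}), I obtain smooth maps $s^T := \Dnc(s)$, $r^T := \Dnc(r)$, $m^T := \Dnc(m)$, $u^T := \Dnc(u)$ and a smooth inverse, all with respect to the $\ci$-structure of the deformation to the normal cone.

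To see these assemble into a groupoid I would argue purely by functoriality. Each groupoid axiom for $\gr_2$ (and, by Remark \ref{normalgrpd}, for the normal groupoid $\gr_1^N$) is an identity among composites of structure maps, i.e.\ a commutative diagram in the category $\mathscr{C}_2^{\infty}$ of pairs. Since $\Dnc$ preserves compositions and identities and is compatible with the identification of composable pairs above, applying $\Dnc$ to these diagrams yields the corresponding identities for $s^T, r^T, m^T, u^T$ and the inverse; associativity, the unit laws and the inverse laws follow at once. By construction, restricting these maps to $t \neq 0$ recovers the structure maps of $\gr_2$, while restricting to $t = 0$ gives the normal-derivative structure $d_N s$, $d_N r$, $d_N m,\dots$ of $\gr_1^N$, so the resulting groupoid is compatible with both, as claimed.

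Finally I would check that $\gr_{\varphi} \rightrightarrows \gr_{\varphi_0}$ is a \emph{Lie} groupoid, i.e.\ that $s^T$ and $r^T$ are submersions. This is again a fiberwise statement over $\Rr$: for $t \neq 0$ they equal the submersions $s$ and $r$ of $\gr_2$, and for $t = 0$ they equal $d_N s$ and $d_N r$, which are submersions because $\gr_1^N \rightrightarrows (\gr_1^{(0)})^N$ is a Lie groupoid by Remark \ref{normalgrpd}; smoothness of the whole family and the submersion property across $t = 0$ are guaranteed by the $X$-slice atlas defining the $\ci$-structure of a deformation to the normal cone. I expect the only genuinely non-formal step to be the canonical identification $(\gr_{\varphi})^{(2)} \cong \Dnc_{\gr_1^{(2)}}^{\gr_2^{(2)}}$, which amounts to checking that the normal bundle $\Nb_{\gr_1^{(2)}}^{\gr_2^{(2)}}$ is naturally the space $(\gr_1^N)^{(2)}$ of composable pairs of the normal groupoid; once this normal-bundle computation is in place, everything else is a formal consequence of the functoriality of $\Dnc$.
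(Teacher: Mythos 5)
Your proposal is correct and is essentially the argument the paper intends: the paper gives no detailed proof, simply asserting the proposition as ``an immediate consequence of the functoriality of the deformation to the normal cone construction,'' and your write-up spells out exactly that functoriality argument (applying $\Dnc$ to the structure maps viewed as morphisms of pairs, with the canonical identification of composable pairs), mirroring the paper's earlier treatment of the tangent groupoid where $m^T=\Dnc(m)$, $s^T=\Dnc(s)$, etc. You correctly isolate the identification $(\gr_{\varphi})^{(2)}\cong\Dnc_{\gr_1^{(2)}}^{\gr_2^{(2)}}$ as the one point needing a genuine check, which is the same subtlety the paper acknowledges for $\gr^T$.
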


\begin{proposition} Given an immersion of Lie groupoids $\gr_1\stackrel{\varphi}{\hookrightarrow}\gr_2$  and a twisting $\sigma$ on $\gr_2$.  There is a canonical twisting $\a_\varphi$ on the Lie groupoid 
$\gr_{\varphi} \rightrightarrows \gr_{\varphi_0}$,  extending the pull-back  twisting on $\gr_2\times \Rr^*$ from $\sigma$.
\end{proposition}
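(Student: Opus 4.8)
The plan is to adapt verbatim the argument of Proposition \ref{twist:tangentgoid}, replacing the tangent groupoid $\gr^T$ by the tangent groupoid $\gr_\varphi$ of the immersion, and to transport the whole twisting datum (bundle, $PU(H)$-action, and $\gr_2$-action) through the deformation to the normal cone functor \eqref{fundnc}. Write $M_i := \gr_i^{(0)}$ for $i=1,2$. By definition the twisting $\sigma$ on $\gr_2$ is a locally trivial right principal $PU(H)$-bundle $p\colon P_\sigma \to M_2$, where $p$ is also the anchor of a compatible left $\gr_2$-action $a_2 \colon \gr_2 \times_{M_2} P_\sigma \to P_\sigma$.

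First I would restrict the twisting along the immersion. Since $\varphi(\gr_1)\subset \gr_2$ and $\varphi_0(M_1)\subset M_2$, the $\gr_2$-action restricts to a left $\gr_1$-action $a_1$ on $P_\sigma|_{M_1} := P_\sigma \times_{M_2} M_1$, the restriction of the bundle to the submanifold $M_1$. Thus $a_1$ is the restriction of $a_2$ to the submanifold $\gr_1 \times_{M_1} P_\sigma|_{M_1} \subset \gr_2 \times_{M_2} P_\sigma$, and the action assembles into a morphism of $\ci$-pairs
\[
a = (a_2, a_1)\colon \bigl(\gr_2 \times_{M_2} P_\sigma,\ \gr_1 \times_{M_1} P_\sigma|_{M_1}\bigr) \longrightarrow \bigl(P_\sigma,\ P_\sigma|_{M_1}\bigr).
\]
Likewise the projection $p$ and the right $PU(H)$-action on $P_\sigma$ define morphisms of pairs, since $PU(H)$ acts along the fibres and hence preserves $P_\sigma|_{M_1}$.

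Next I would apply the functor $\Dnc$ of \eqref{fundnc} to all of these structure maps. Setting $P_{\sigma_\varphi} := \Dnc_{P_\sigma|_{M_1}}^{P_\sigma}$, functoriality of $\Dnc$ turns the pulled-back $PU(H)$-action into a right $PU(H)$-action on $P_{\sigma_\varphi}$, and turns $\Dnc(p)$ into a map $P_{\sigma_\varphi} \to \Dnc_{M_1}^{M_2} = \gr_{\varphi_0}$; Proposition \ref{twisting:normal} (applied to the induced twisting $\sigma|_{M_2}$ on $M_2$ and the submanifold $M_1$) identifies $P_{\sigma_\varphi} \to \gr_{\varphi_0}$ as a locally trivial principal $PU(H)$-bundle whose restriction to $t=0$ is the pull-back of $\sigma|_{M_1}$ along $\Nb_{M_1}^{M_2} \to M_1$. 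Applying $\Dnc$ to the action $a$ then produces the $\gr_\varphi$-action on $P_{\sigma_\varphi}$: the associativity and unit axioms, and the compatibility with the $PU(H)$-action, follow by applying $\Dnc$ to the corresponding identities satisfied by $a_2$, exactly as the groupoid structure of $\gr_\varphi$ itself was obtained from that of $\gr_2$ in Proposition \ref{HSimmer}. On the open part $t \neq 0$ everything reduces to $P_\sigma$, $a_2$ and $p$ multiplied by $\Rr^*$, so $\sigma_\varphi$ restricts there to the pull-back of $\sigma$ under the projection $\gr_2 \times \Rr^* \to \gr_2$, as required.

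The one step deserving care — and the point I expect to be the main obstacle — is the identification
\[
\Dnc_{\gr_1 \times_{M_1} P_\sigma|_{M_1}}^{\gr_2 \times_{M_2} P_\sigma} \cong \gr_\varphi \times_{\gr_{\varphi_0}} P_{\sigma_\varphi},
\]
which is what makes $\Dnc(a)$ into an honest groupoid action map rather than merely a map between deformation spaces. This is the assertion that $\Dnc$ commutes with the fibre products defining the action space, and it holds because the source map $s\colon \gr_2 \to M_2$ and the projection $p\colon P_\sigma \to M_2$ are submersions, so the fibre products are transverse and their normal bundles split compatibly. This is the same compatibility already invoked in the construction of the tangent groupoid, where $(\gr^T)^{(2)}$ is identified with $\Dnc$ applied to $\gr^{(2)}$; once it is in place, the remainder of the verification is a routine transcription of Proposition \ref{twist:tangentgoid}.
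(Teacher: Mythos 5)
Your proposal is correct and follows essentially the same route as the paper, which states this result as an immediate consequence of the functoriality of the deformation to the normal cone and proves the analogous statement for tangent groupoids (Proposition \ref{twist:tangentgoid}) by applying the functor $\Dnc$ to the action viewed as a morphism of $\ci$-pairs. You simply spell out the details the paper leaves implicit, including the compatibility of $\Dnc$ with the fibre products defining the action space, which is the same identification used to endow $\gr_\varphi$ with its groupoid structure.
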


\section{Analytic index morphism for a twisted Lie groupoid}

In this section, we first review the analytic index morphism for any Lie groupoid, and then develop
an analytic index morphism for a Lie groupoid with a twisting. 
\subsection{The case of Lie groupoids}

\subsubsection{The convolution  $C^*$-algebra of a Lie groupoid}

We recall  how to define the reduced and maximal  $C^*$-algebras of a Lie groupoid 
$\gr\rightrightarrows M$, $C_r^*(\gr)$ and $C^*(\gr)$. 
 
We start with the reduced $C^*$-algebra. The basic elements of $C_r^*(\gr)$ are smooth half densities with compact support on  $\gr$ in $ \ci_c(\gr,\Omega^{\frac{1}{2}})$ where $ \Omega^{\frac{1}{2}}$ is the real  line bundle over $\gr$ given by 
$ s^*(\Omega_0^{\frac{1}{2}})\otimes t^*(\Omega_0^{\frac{1}{2}})$ and $\Omega_)^{\frac{1}{2}}$ denote the half density bundle associated to $A\gr$.   In particular, $\Omega^{\frac{1}{2}}_{\gamma}=(\Omega_0)^{\frac{1}{2}}_{s(\gamma)}\otimes (\Omega_0)^{\frac{1}{2}}_{t(\gamma)}$. 
The convolution product on $\ci_c(\gr,\Omega^{\frac{1}{2}})$ is given by
$$(f*g)(\gamma)
=\int_{\gamma_1\cdot\gamma_2=\gamma}
f(\gamma_1)g(\gamma_2),$$
where the integration is performed over the resulting $1$-density.
The $*$-operation is simply given by $f^*(\gamma)=\overline{f(\gamma^{-1})}$.
 
For every $x\in M$, there is a representation $\pi_x$ of $\ci_c(\gr,\Omega^{\frac{1}{2}})$ on $L^2(\gr_x,\Omega^{\frac{1}{2}})$ given by
$$\pi_x(f)(\xi)(\gamma)=\int_{\gamma_1\cdot\gamma_2=\gamma}
f(\gamma_1)\xi(\gamma_2)$$
 
The norm $\| f\|:=sup_{x\in M}{\pi_x(f)}$ is well defined (see \cite{Concg} proposition 5.3 chapter 2 or \cite{Ren} for more details) and the reduced $C^*$algebra
$C^*_r(\gr)$ is the $C^*$-completion with respect to this norm. For obtain the maximal, one has to take the completion with respect to the norm $\| f\|:=sup_{\pi}{\pi(f)}$  over all  involutive Hilbert  representations of $\ci_c(\gr,\Omega^{\frac{1}{2}})$.

The line bundle $\Omega^{\frac{1}{2}}$ is trivial, even if there is not a canonical trivialization. We have therefore a non canonical isomorphism between the sections $\ci_c(\gr,\Omega^{\frac{1}{2}})$ and the compactly supported functions $\ci_c(\gr)$. In the sequel we will often use the notation $\ci_c(\gr)$ even if we are thinking on half densities. 

\subsubsection{Analytic index morphism for Lie groupoids}

A $\gr$-$\it{pseudodifferential}$ $\it{operator}$ is a family of
pseudodifferential operators $\{ P_x\}_{x\in \go} $ acting in
$\ci_c(\gr_x)$ such that if $\gamma \in \gr $ and
$$U_{\gamma}:\ci_c(\gr_{s(\gamma)}) \rightarrow \ci_c(\gr_{r(\gamma)}) $$
the induced operator, then we have the following compatibility condition 
$$ P_{r(\gamma)} \circ U_{\gamma}= U_{\gamma} \circ P_{s(\gamma)}.$$
We also admit, as usual, operators acting in sections of a complex  vector bundle  $E\rightarrow \go$.
There is also a differentiability condition with respect to $x$ that can
be found in \cite{NWX}.

In this work we are going to work exclusively with uniformly supported operators, let us recall this notion. Let $P=\{ P_x,x\in \go\}$ be a $\gr$-operator, we denote by $k_x$ the Schwartz kernel of $P_x$. Let 
$$supp\, P:=\overline{\cup_xsupp \,k_x}, \text{ and }$$
$$supp_{\mu}P:=\mu_1(supp\,P),$$ where $\mu_1(g',g)=g'g^{-1}$. 
We say that $P$ is uniformly supported if $supp_{\mu}P$ is compact.

We denote by $\Psi^m(\gr,E)$ the space of uniformly supported $\gr$-operators  of order $m$, acting on sections of a complex vector bundle $E$. We denote also
\begin{center}
$\Psi^{\infty}(\gr,E)=\bigcup_m \Psi^m(\gr,E)$ and  $\Psi^{-\infty}(\gr,E)=\bigcap_m \Psi^m(\gr,E).$
\end{center}

The composition of two such operators is again of this kind (Lemma 3, \cite{NWX}). 
In fact, $\Psi^{\infty}(\gr,E)$ is a filtered algebra (Theorem 1, rf.cit.), $\it{i.e.}$, 
\begin{center}
$\Psi^{m}(\gr,E)\Psi^{m'}(\gr,E)\subset \Psi^{m+m'}(\gr,E).$
\end{center}
In particular,  $\Psi^{-\infty}(\gr,E)$ is a bilateral ideal.

\begin{remark}
The choice on the support can be justified  by  the fact that  
$\Psi^{-\infty}(\gr,E)$ is identified with 
$\ci_c(\gr,End(E))$, thanks the Schwartz kernel theorem. 
\end{remark}

The notion of principal symbol extends also to this setting. Let us denote by 
$\pi: A^*\gr \rightarrow \go$ the projection. For $P=\{ P_x,x\in \go\} \in \Psi^{m}(\gr,E,E)$, 
the principal symbol of $P_x$, $\sigma_m(P_x)$, is a $\ci$ section of the vector bundle 
$Hom(\pi_x^*r^*E, \pi_x^*r^*E)$ over $T^*\gr_x$ (where 
$\pi_x:T^*\gr_x \rightarrow \gr_x$), such that at each fiber the morphism is homogeneous of degree $m$ 
(see \cite{AS} for more details). 
There is a section  
$\sigma_m(P)$ of $Hom(\pi^*E, \pi^*E)$ over $A^*\gr$ 
such that
\begin{equation}\label{gpsym}
\sigma_m(P)(\xi)=\sigma_m(P_x)(\xi)\in Hom(E_x,E_x) \text{ if  } \xi \in A^*_x\gr
\end{equation}
This   induces a unique surjective linear map
\begin{equation}\label{gpsymap}
\sigma_m:\Psi^{m}(\gr,E)\rightarrow \sw^m(A^*\gr,Hom(E,E)),
\end{equation}
with kernel $\Psi^{m-1}(\gr,E)$ (see for instance 
Proposition 2 \cite{NWX}) and where $\sw^m(A^*\gr,Hom(E,E))$ denotes the sections of the vector  bundle
$Hom(\pi^*E,\pi^*E)$ over $A^*\gr$, 
homogeneous of degree $m$ along  each fiber.

\begin{definition} 
Let $P=\{P_x,x\in \go\}$ be a $\gr$-pseudodifferential operator. 
 $P$ is elliptic if   $P_x$ is elliptic for each $x$. 
We denote by $Ell(\gr)$ the set of  elliptic $\gr$-pseudodifferential  operators.
\end{definition}

The linear map  (\ref{gpsymap})  for  elliptic   $\gr$-pseudodifferential operators defines a principal symbol map:
\begin{equation}\label{ellgsymb}
Ell(\gr)\stackrel{[\sigma]}{\longrightarrow}K^0(A^*\gr).
\end{equation}

Connes in  \cite{Coinc} proved that if $P=\{P_x,x\in \go\} \in Ell(\gr)$, then it exists 
$Q\in  \Psi^{-m}(\gr,E)$ such that
\begin{center}
$Id_E-PQ\in \Psi^{-\infty}(\gr,E)$  and  $Id_E-QP\in \Psi^{-\infty}(\gr,E)$,
\end{center}
where $Id_E$ denotes the identity operator over $E$. In other words, $P$ defines 
 an element in $K_0(\cg)$, when $E$ is trivial,  given by  
\begin{equation}\label{index}
\left[T\left( 
\begin{array}{cc}
1 & 0\\
0 & 0
\end{array}
\right)
T^{-1}\right]- \left[ \left( 
\begin{array}{cc}
1 & 0\\
0 & 0
\end{array}
\right) \right] \in K_0(\widetilde{\cg}),
\end{equation}
where $1$ is the unit in $\widetilde{\cg}$ (the unitarisation of $\cg$), and where $T$ is given by 
$$
T=\left( 
\begin{array}{cc}
(1-PQ)P+P & PQ-1\\
1-QP & Q
\end{array}
\right)
$$
with inverse
$$T^{-1}=\left( 
\begin{array}{cc}
Q & 1-QP\\
PQ-1 & (1-PQ)P+P
\end{array}
\right).$$

If $E$ is not trivial we obtain in the same way an element of 
$K_0(\ci_c(\gr,Hom(E,F)))\cong  K_0(\cg)$ since $\ci_c(\gr,Hom(E,F)))$ is Morita
equivalent to $\cg$.

\begin{definition}($\gr$-Index)
Let $P$ be an elliptic  $\gr$-pseudodifferential   operator. We denote by 
\[
\ind_\gr (P)  \in K_0(\cg)
\]
the element defined by $P$ as above, called the $\gr$-index of $P$.  The $\gr$-index  defines a correspondence
\begin{equation}\label{ellgind}
\xymatrix{ Ell(\gr)\ar[rr]^{\ind_\gr}  && K_0(\cg)}.
\end{equation}
\end{definition}

 

Consider the morphism 
\begin{equation}
K_0(\cg)\stackrel{j}{\longrightarrow} K_0(C^*_r(\gr))
\end{equation}
induced by the inclusion $\cg \subset C^*_r(\gr)$, then the composition 
$$Ell(\gr)\stackrel{\ind_\gr}{\longrightarrow}K_0(\cg)\stackrel{j}{\longrightarrow} K_0(C^*_r(\gr))$$ factors through the principal symbol class. In other words, we have the following commutative diagram
\[
\xymatrix{
Ell(\gr) \ar[r]^{\ind_\gr}\ar[d]_{[\sigma]}& K_0(\cg)\ar[d]^j \\
K^0(A^*\gr) \ar[r]_{\ind_a} & K_0(C^*_r(\gr)).
}
\]
  The  resulting morphism
\begin{equation}\xymatrix{
K^0(A^*\gr) \ar[rr]^{\ind_{a, \gr}}&& K_0(C^*_r(\gr))}
\end{equation}
is called the analytic index morphism of $\gr$. In fact,  $\ind_{a, \gr}$ is the index morphism associated to the exact sequence of $C^*$-algebras (\cite{Coinc}, \cite{CS}, \cite{MP}, \cite{NWX})  
\begin{equation}\label{gpdose}
0\rightarrow C^*_r(\gr)\longrightarrow \overline{\Psi^0(\gr)}\stackrel{\sigma}{\longrightarrow}C_0(S(A^*\gr))\rightarrow 0
\end{equation}
where $\overline{\Psi^0(\gr)}$ is a certain $C^*$-completion of $\Psi^0(\gr)$, $S(A^*\gr)$ is the sphere  bundle of $A^*\gr$ and $\sigma$ is the extension of the principal symbol.


We remark that  this index morphism $\ind_{a, \gr}$ can also be constructed using the tangent groupoid and its $C^*$-algebra. We briefly recall this construction from \cite{MP}. 

Notice that  the evaluation morphisms extend to the $C^*$-algebras as in  \cite{Ren}:
 $$C^*(\gr^T) \stackrel{ev_0}{\longrightarrow}
C^*(A\gr) \text{ for $t=0$, and }$$ $$C^*(\gr^T) \stackrel{ev_t}{\longrightarrow}
C^*(\gr) \text{ for } t\neq 0.$$
Moreover, since $\gr \times (0,1]$ is an open saturated subset of $\gt$ and $A\gr$ a saturated closed subset, we have the following exact sequence (\cite{Ren,HS})
\begin{equation}\label{segt}
0 \rightarrow C^*(\gr \times (0,1]) \longrightarrow C^*(\gr^T)
\stackrel{ev_0}{\longrightarrow}
C^*(A\gr) \rightarrow 0.
\end{equation}
Now, the $C^*$-algebra $C^*(\gr \times (0,1])\cong C_0((0,1],C^*(\gr))$ is contractible. 
This implies that the groups $K_i(C^*(\gr \times (0,1]))$ vanish, for $i=0,1$. 
Then, applying the $K-$theory  functor to the exact sequence above, we obtain that
$$K_i(C^*(\gr^T)) \stackrel{(ev_0)_*}{\longrightarrow}
K_i(C^*(A\gr))$$ is an isomorphism, for $i=0,1$. In \cite{MP},
Monthubert-Pierrot show that
\begin{equation}\label{defind}
\ind_{a, \gr}=(ev_1)_*\circ
(ev_0)_*^{-1},
\end{equation}
modulo the Fourier isomorphism   $C^*(A\gr)\cong
C_0(A^*\gr)$, when we consider the $K_0$ evaluations (see also \cite{HS} and \cite{NWX}). As usual we consider the index with values in the reduced $C^*$-algebra by taking the canonical morphism 
$K_{0}(C^*(\gr))\longrightarrow K_{0}(C_r^*(\gr))$.
Putting this in a commutative diagram, we have
\begin{equation}\label{eldiagramaevaluadoestrella}
\xymatrix{
&K_0(C^*(\gr^T))\ar[ld]_{(ev_0)_*}^{\cong} \ar[rd]^{(ev_1)_*}&\\
K^0(A^*\gr)\ar[rr]_{\ind_{a, \gr}}&&K_{0}(C_r^*(\gr)). 
}
\end{equation}
In summary, the algebra $C^*(\gr^T)$ is a strict deformation quantization of $C_0(A^*\gr)$, and the analytic index morphism of $\gr$ can be constructed by means of this deformation.

\subsubsection{Analytic indices morphisms for Lie groupoid immersions} \ 

 For Lie groupoid immersions,   Hilsum-Skandalis in \cite{HS} shown that   the same methods as above  can be applied to define an index morphism (or even a KK-element) associated to an  immersion of Lie groupoids $\varphi:\gr_1\hookrightarrow \gr_2$, where we assume as in section III  of  \cite{HS} that $\gr_1$ is amenable (so $C^*(\gr_1)$  is nuclear). Indeed, if we consider
$$\gr_{\varphi}:=\gr^N_1\times \{0\}\bigsqcup \gr_2\times (0,1]$$
and the short exact sequence
\begin{equation}\label{segtimm}
0 \rightarrow C^*(\gr_2 \times (0,1]) \longrightarrow C^*(\gr_{\varphi})
\stackrel{ev_0}{\longrightarrow}
C^*(\gr_1^N) \rightarrow 0,
\end{equation}
we can define the index morphism
\begin{equation}\label{HSindex}
\ind_{\varphi}:K_*(C_r^*(\gr_1^N))\longrightarrow K_*(C_r^*(\gr_2)),
\end{equation}
as the induced deformation morphism $\ind_{\varphi}:=p_*\circ(ev_1)_*\circ(ev_0)_{*}^{-1}$, where $p:C^*(\gr_2)\to C_r^*(\gr_2)$ is the canonical morphism. Here we use  the fact that $\gr_1^N$ is also amenable and so $C_r^*(\gr_1^N)=C^*(\gr_1^N)$.

\subsection{The case of twisted groupoids}

\subsubsection{Twisted K-theory of a  Lie groupoid with a twisting}

Let $(\gr,\sigma)$ be a twisted groupoid. With respect to a covering  $\Omega = \{\Omega_i\}$ of $\go$, the twisting $\sigma$ is given by a strict morphism of groupoids 
$$ \sigma: \gr_{\Omega}   \longrightarrow PU(H),$$
where $\gr_{\Omega}$ is the covering groupoid associated to $\Omega$. 
Consider the central extension of groups
$$S^1 \longrightarrow U(H) \longrightarrow PU(H),$$
 we can pull it back to get a $S^1$-central extension of Lie groupoid $R_{\sigma}$  over $\gr_{\Omega}$ 
\begin{equation}
\xymatrix{
  S^1\ar[d]\ar[r]& S^1\ar[d]\\
R_{\sigma}\ar[d]\ar[r]&U(H)\ar[d]\\
\gr_{\Omega}\ar[r]_-{\sigma}&PU(H)\\
}
\end{equation}
In particular, $R_{\sigma}\rightrightarrows \bigsqcup_i\Omega_i$ is a Lie groupoid and $R_{\sigma}\longrightarrow \gr_{\Omega}$ is a $S^1$-principal bundle.

We recall the definition of the convolution  algebra and the $C^*$-algebra of a twisted Lie groupoid $(\gr, \a)$ \cite{Ren87,TXL}:

\begin{definition}
Let $R_{\sigma}$ be the $S^1$-central extension of groupoids associated to a twisting $\sigma$. The convolution algebra of $(\gr, \a)$ is by definition the following sub-algebra of 
$C_{c}^{\infty}(R_{\sigma})$: 
\begin{equation}
C_{c}^{\infty}(\gr, \a)=\{f\in C_{c}^{\infty}(R_{\sigma}): f(\tilde{\gamma} \cdot \lambda)=\lambda^{-1}\cdot f(\tilde{\gamma}), \forall \tilde{\gamma}\in R_{\sigma },\forall \lambda \in S^1\}.
\end{equation}
The reduced (maximal resp.) $C^*$-algebra of $(\gr, \sigma)$, denoted by $C^*_r(\gr,  \sigma)$ ($C^*(\gr, \sigma)$ resp.),  is the completion of 
$C_{c}^{\infty}(\gr, \a)$ in $C_r^*(R_{\sigma})$ ($C^*(R_{\sigma})$ resp.).
\end{definition}

Let $L_{\sigma}:=R_{\sigma}\times_{S^1}\mathbb{C}$ be the  complex line bundle  over $\gr_\Omega$ which can be considered as a Fell bundle (using the groupoid structure of $R_{\sigma}$) over $\gr_{\Omega}$. 
In fact, the algebra of sections of this Fell bundle $C_{c}^{\infty}(\gr_{\Omega },L_{\sigma})$ is isomorphic to $C_{c}^{\infty}(\gr, \a)$, and the same is true for the $C^*$-algebras, $C^*(\gr_{\Omega}, L_{\sigma})\cong C^*(\gr, \a)$ (see (23) in \cite{TXL} for an explicit isomorphism).

\begin{remark}\label{decomposition}\cite{TXL}. 
Given the extension $R_{\sigma}$ as above, the $S^1$-action on $R_{\sigma}$ induces a $\mathbb{Z}$-gradation in $C_r^*(R_{\sigma})$ (Proposition 3.2, ref.cit.). More precisely,  we have
\begin{equation}\label{gradext}
C_r^*(R_{\sigma})\cong \bigoplus_{n\in \mathbb{Z}} C_r^*(\gr, \sigma^n)
\end{equation}
where $C_r^*(\gr, \sigma^n)$ is the reduced $C^*$-algebra of the twisted groupoid $(\gr, \a^n)$ 
corresponding to the  Fell bundle
$$ L_{\sigma}^n=L_{\sigma}^{\otimes n} \longrightarrow \gr_{\Omega },$$
for all $n\neq 0$, and $C_r^*(\gr, \sigma^0)=C_r^*(\gr_{\Omega})$ by convention. Similar results hold for the maximal $C^*$-algebra. 
\end{remark}

\begin{remark}\label{Moritaextensions}
If we take the twisting $\sigma$ as the $PU(H)$-principal bundle over $\gr$ (as in (1) definiton \ref{HSmorphisms}), then the $C^*$-algebra $C^*(R_{\sigma})$ (maximal or reduced) is well defined up to a canonical strong Morita equivalence. 
Indeed, given $\gr_{\Omega}\stackrel{\sigma}{\longrightarrow}PU(H)$, $\gr_{\Omega'}\stackrel{\sigma'}{\longrightarrow}PU(H)$ two strict morphisms defining the same $PU(H)$-principal bundle over $\gr$, they define two canonically Morita equivalent extensions $R_{\sigma}$ and $R_{\sigma'}$. The induced   strong Morita equivalence between $C^*(R_{\sigma})$ and $C^*(R_{\sigma'})$ respects the gradation of the precedent remark and hence it induces a  strong Morita equivalence between $C^*(\gr, \sigma^n)$ and $C^*(\gr, (\sigma')^n)$ (See proposition 3.3 in \cite{TXL} or \cite{Ren87} for further details).
\end{remark}

\begin{definition}\label{twistedkth} 
Following \cite{TXL}, we define the twisted K-theory of the twisted groupoid $(\gr,\sigma)$ by
\begin{equation}
K^i(\gr,\sigma):=K_{-i}(C_r^*(\gr, \a)).
\end{equation}
In particular if $\sigma$ is trivial we will be using the notation (unless specified otherwise) $K^i(\gr)$ for the respective $K$-theory group of the reduced groupoid $C^*$-algebra.
\end{definition}

\begin{remark}
By the remark \ref{Moritaextensions}, the group $K^i(\gr,\sigma)$ is well defined, up to a canonical isomorphism coming from the respective Morita equivalences.
\end{remark}


\begin{remark}
For  the groupoid  given by  a manifold $M\rightrightarrows M$. A twisting on $M$ can be given by a Dixmier-Douday class on $H^3(M,\ZZ)$. In this event, the twisted K-theory, as we defined it, coincides with  twisted K-theory defined in \cite{ASeg,Kar08}. Indeed the $C^*$-algebra $C^*(M,\sigma)$ is Morita equivalent to the continuous trace $C^*$-algebra defined by the correspondant Dixmier-Douady class (see for instance Theorem 1 in \cite{FMW}).
\end{remark}


\subsubsection{The Analytic index morphism of a twisted groupoid}

Let $(\gr,\sigma)$ be a twisted Lie groupoid with the induced central extension $$ R_{\sigma}\rightarrow \gr_{\Omega } $$
associated to
an open cover $\{\Omega_i\}$ of  $\go =M$.
Let  $(M,\sigma_0)$ be  the twisted groupoid  induced by restriction of $\sigma$  to the unit 
space and  let its central extension be 
$$ R_{\sigma_0}\rightarrow M_{\Omega }.$$ Here $M_{\Omega }$ denotes the covering groupoid
\[
   \bigsqcup_{i, \j}\Omega_{ij}  := \bigsqcup_{i, j} \Omega_i \cap \Omega_j  \rightrightarrows  \bigsqcup_i\Omega_i.
\]

We can consider the inclusion of central extensions:
\begin{equation}\label{inclusionext}
\xymatrix{
R_{\sigma_0}\ar[d]\ar@{^{(}->}[r]&R_{\sigma}\ar[d]\\
M_{\Omega}\ar@{^{(}->}[r]&\gr_{\Omega}\\
}
\end{equation}
as an immersion of Lie groupoids $\iota_{\sigma}: R_{\sigma_0} \to R_{\sigma}$ , which is $S^1$-equivariant and with $R_{\sigma_0}$ amenable. We apply then to this immersion the tangent groupoid construction in subsection 1.3.3 
\begin{equation}\label{defgroupoidext}
R_{\iota_{\sigma}}: = R_{\sigma_0}^{N}\times \{0\} \bigsqcup R_{\sigma}\times (0,1]
\end{equation}
in order to define its index morphism as in (\ref{HSindex}):
\begin{equation}\label{indexR}
\ind_{\iota_{\sigma}}:K_*(C_r^*(R_{\sigma_0}^{N}))\longrightarrow K_*(C_r^*(R_{\sigma})).
\end{equation}

As we remarked before, $R_{\sigma_0}^{N}\rightrightarrows \bigsqcup_i\Omega_i$ is a Lie groupoid with the structures described in  Remark \ref{normalgrpd}. In fact, $R_{\sigma}^{N}$ is also the $S^1$-central extension of  some Lie groupoid . Let us look at this with more detail.
Write  $\sigma=\{(\Omega_i,\sigma_{ij})\}$ as a $PU(H)$-valued cocycle, consider the inclusions 
$\Omega_{ij}\subset \gr_{\Omega_j}^{\Omega_i}$ and their normal bundles $N_{ij} $. We have a Lie groupoid 
$$A_{\sigma}:=\bigsqcup_{i,j}N_{ij}\rightrightarrows \bigsqcup_i\Omega_i,$$
where the source and target maps are the evident ones and where the product is induced from the derivation in the normal direction of the product 
$$\gr_{\Omega_j}^{\Omega_i} \times_{\Omega_j}\gr_{\Omega_k}^{\Omega_j} \longrightarrow \gr_{\Omega_k}^{\Omega_i}.$$

We have also a strict morphism of groupoids 
$$A_{\sigma}\stackrel{\widetilde{\sigma_0}}{\longrightarrow}PU(H)$$
defined  to be equal to $\sigma_0$ in the base direction and constant in the normal vector fibers. The
corresponding central extension can be described as follows.  Consider the Lie algebroid $A\gr\stackrel{\pi}{\longrightarrow}M$ as a Lie groupoid $A\gr \rightrightarrows M$ using its vector bundle structure. Then 
the central extension associated to $(A_\a,  \widetilde{\sigma_0})$ is given by  the following pullback diagram
\begin{equation}\label{twistedA}
\xymatrix{
R_{\sigma_0}^{N}\ar[d]\ar[r]&U(H)\ar[d]\\
(A\gr)_{\Omega}\ar@<.5ex>[d]\ar@<-.5ex>[d]  \ar[r]_{\widetilde{\sigma_0}}&PU(H)\ar@<.5ex>[d]\ar@<-.5ex>[d] \\
\bigsqcup_i\Omega_i \ar[r]  & \{e\},
}
\end{equation}
via the identification $N_{ij}\cong  A\gr |_{\Omega_{ij}}$. In other words the subgroupoid  $R_{\sigma_0}^N$ is the $S^1$-central extension associated to the twisted groupoid $(A\gr\rightrightarrows M, \sigma_0\circ \pi_A)$, where 
$\pi_A:A\gr \rightarrow M$ is considered as a morphism of groupoids. This discussion is of course in agreement with proposition \ref{twist:tangentgoid}. 

Hence we can denote 
\begin{equation}
R_{\sigma_0}^N=R_{\sigma_0\circ \pi_A},
\end{equation}
that is, $R_{\sigma_0}^N$ is the central extension groupoid associated to the twisted groupoid $(A\gr\rightrightarrows M, \sigma_0\circ \pi_A)$.

As in Remark \ref{decomposition}, we have a decomposition
$$C_r^*(R_{\sigma_0\circ \pi_A})=\bigoplus_{n\in\mathbb{Z}}C_r^*(A\gr,(\sigma_0\circ \pi_A)^n).$$
Since, the inclusion of extensions (\ref{inclusionext}) is $S^1$-equivariant, we have that the index morphism (\ref{indexR}) respects the $\ZZ$-gradation
\begin{equation}\label{indexRn}
\ind_{\iota_{\sigma}}:\bigoplus_{n\in\mathbb{Z}}K^*(A\gr,(\sigma_0\circ \pi_A)^n)\longrightarrow 
\bigoplus_{n\in\mathbb{Z}}K^*(\gr,\sigma^n)
\end{equation}


 Consider  the projection   $A^*\gr\stackrel{\pi_{A^*}}{\longrightarrow}M$ and the pullback of $\sigma_0$ to get the twisted groupoid  
\[
(A^*\gr\rightrightarrows A^*\gr,\sigma_0\circ \pi_{A^*}).\]
   Associated to this twisted groupoid we have a central extension given by the pull-back diagram
\begin{equation}\label{twistedA*}
\xymatrix{
R_{\sigma_0\circ \pi_{A^*}}\ar[d]\ar[r]&R_{\sigma_0}\ar[d]\ar[r]& U(H)\ar[d]\\
\bigsqcup A^*\gr|_{\Omega_{ij}}\ar[r]_{\pi_{A^*}}\ar@<.5ex>[d]\ar@<-.5ex>[d] &
\bigsqcup  \Omega_{ij} \ar[r]_{ \sigma_0}\ar@<.5ex>[d]\ar@<-.5ex>[d] &  PU(H)\ar@<.5ex>[d]\ar@<-.5ex>[d] \\
\bigsqcup A^*\gr|_{\Omega_i}\ar[r] & \bigsqcup \Omega_i \ar[r] & \{e\}. 
}
\end{equation}

As in the untwisted case, we have the following proposition.

\begin{proposition}\label{twistedFourier}
The fiberwise Fourier transform gives an isomorphism of $C^*$-algebras
\begin{equation}
C_r^*(R_{\sigma_0\circ \pi_A})\cong  C_r^*(R_{\sigma_0\circ \pi_{A^*}}).
\end{equation}
which preserves the gradation under the $S^1$-action. In particular we have an isomorphism   between the correspondent twisted $K$-theory groups
$$K^*(A\gr ,\sigma_0\circ \pi_{A}) \cong  K^*(A^*\gr,\sigma_0\circ \pi_{A^*}).$$
The right side is the twisted K-theory of the topological space $A^*\gr$ with the pull-ball
twisting $(\pi_{A^*})^*\a_0$.
\end{proposition}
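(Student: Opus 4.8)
The plan is to reduce the statement to a fiberwise Fourier transform that is compatible with the $S^1$-central extension structure. The key observation is that both central extensions $R_{\sigma_0\circ \pi_A}$ and $R_{\sigma_0\circ \pi_{A^*}}$ are pulled back from the \emph{same} extension $R_{\sigma_0}$ over $M_{\Omega}$, via the bundle projections $\pi_A: A\gr \to M$ and $\pi_{A^*}: A^*\gr \to M$ respectively (this is exactly the content of diagrams (\ref{twistedA}) and (\ref{twistedA*})). In particular, over each chart $\Omega_{ij}$ the relevant Fell bundles are pull-backs of the line bundle $L_{\sigma_0}|_{\Omega_{ij}}$ along the vector bundle projections, and the Fourier transform acts only in the \emph{fiber} directions of $A\gr$ (resp.\ $A^*\gr$), which are precisely the directions in which the twisting is constant.

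First I would recall that for the untwisted groupoids $A\gr \rightrightarrows M$ and $A^*\gr \rightrightarrows M$ (both abelian, with their vector-bundle/additive groupoid structure over $M$) the fiberwise Fourier transform furnishes the classical isomorphism $C_r^*(A\gr) \cong C_0(A^*\gr) = C_r^*(A^*\gr)$, the same isomorphism $\F$ already used in the definition (\ref{defind}) of the untwisted analytic index. Concretely, for $f \in \cc(A\gr)$ one sets
\[
\widehat{f}(x,\xi) = \int_{A_x\gr} f(x,v)\, e^{-i\langle \xi, v\rangle}\, dv, \qquad x\in M,\ \xi \in A^*_x\gr,
\]
and this intertwines the convolution product over the fibers of $A\gr$ (which is abelian, being addition in a vector space) with pointwise multiplication over $A^*\gr$. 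The next step is to lift this transform to the central extensions: because $R_{\sigma_0\circ\pi_A}$ is the $S^1$-bundle $R_{\sigma_0}$ pulled back along $\pi_A$, a function on $R_{\sigma_0\circ\pi_A}$ satisfying the equivariance condition $f(\tilde\gamma\cdot\lambda)=\lambda^{-1}f(\tilde\gamma)$ is the same as a compactly supported section of the Fell bundle $\pi_A^* L_{\sigma_0}$, and I would apply the Fourier transform to the scalar fiber coordinate, leaving the $L_{\sigma_0}$-component untouched. This produces a section of $\pi_{A^*}^* L_{\sigma_0}$, i.e.\ an element of $\cc(A^*\gr, \sigma_0\circ\pi_{A^*})$, and the $S^1$-weight is manifestly preserved since the transform does not involve the central $S^1$.

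The main verification, and the step I expect to be the real work, is checking that this lifted transform carries the twisted convolution product to the twisted pointwise product. The point is that in the convolution product over $R_{\sigma_0\circ\pi_A}$ the only directions being integrated are the abelian fiber directions of $A\gr$; the cocycle $\sigma_0$ depends only on the base point $x\in M$ (being constant in the normal/fiber directions, by the very construction of $\widetilde{\sigma_0}$ in diagram (\ref{twistedA})). Hence the twist factors out of the fiber integral as a multiplier depending only on $x$, and the convolution decouples into the ordinary abelian Fourier computation (handled by the untwisted case) tensored with the fixed $L_{\sigma_0}$-multiplication over the base. This is where I would carry out the one genuine calculation, essentially the identity $\widehat{f*g}=\widehat f\cdot\widehat g$ but with the Fell-bundle coefficients treated as scalars that are inert under the fiber integration. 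Once this is established on $\cc$, I would extend the isomorphism to the $C^*$-completions by the standard continuity of the Fourier transform (the norms being controlled through the regular representations of $R_{\sigma_0\circ\pi_A}$ and $R_{\sigma_0\circ\pi_{A^*}}$, which correspond under the transform). Taking the degree-one graded piece then yields $K^*(A\gr,\sigma_0\circ\pi_A)\cong K^*(A^*\gr,\sigma_0\circ\pi_{A^*})$, and the final identification with the twisted K-theory of the topological space $A^*\gr$ with twisting $(\pi_{A^*})^*\sigma_0$ follows because $A^*\gr\rightrightarrows A^*\gr$ is the unit groupoid of a space, so its twisted groupoid K-theory is by definition the topological twisted K-theory.
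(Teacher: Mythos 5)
Your proposal is correct and follows essentially the same route as the paper: the paper also proves the statement via the explicit fiberwise Fourier transform on the $S^1$-equivariant functions, using exactly your key observation that the twisting is constant in the fiber directions so the twisted convolution decouples into the abelian Fourier computation with the extension data acting as inert coefficients (the paper phrases this, after localizing to a trivializing chart, as the identification $C_r^*(R_{\beta\circ\pi_E})\cong C_r^*(R_\beta,(C_0(\Rr^q),*))$ versus $C_r^*(R_\beta,(C_0(\Rr^q),\cdot))$). The only cosmetic differences are that the paper states and proves the result for a general vector bundle $E\to X$ and patches via local trivializations, and it explicitly records that the transform of a compactly supported section is only Schwartz-class in the fiber directions.
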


We will prove the proposition above in the more general case of vector bundles. Indeed,
given  a vector bundle $E\stackrel{\pi_E}{\longrightarrow}X$ and a twisting $\beta$ on X, we can consider two twisted groupoids: the first is $(E,\beta\circ \pi_E)$ where $E\rightrightarrows X$ is considered as a groupoid and $\pi$ as a groupoid morphism, and the second is $(E^*,\beta \circ\pi_{E^*})$ where $E^*\rightrightarrows E^*$ is the unit groupoid with the twisting  on the topological space $E^*$. 

\begin{proposition}
The Fourier transform gives an isomorphism 
\begin{equation}\nonumber
C_r^*(R_{\beta\circ \pi_E})\cong  C_r^*(R_{\beta\circ \pi_{E^*}}).
\end{equation}
which preserves the gradation under the $S^1$-action. In particular we have an isomorphism of $C^*$-algebras
\begin{equation}\nonumber
C_r^*(E ,\beta\circ \pi_E) \cong  C_r^*(E,\beta \circ\pi_{E^*}).
\end{equation}
\end{proposition}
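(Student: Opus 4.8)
The plan is to reduce everything to the classical fiberwise Fourier transform (Pontryagin duality for the bundle of abelian groups $E\to X$) and to observe that the twisting, being constant along the fibers of $E$, is simply transported along, untouched. First I would fix an open cover $\{\Omega_i\}$ of $X$ over which $\beta$ is given by a $PU(H)$-valued cocycle $g_{ij}\colon\Omega_{ij}\to PU(H)$ and over which $E$ is trivialized, and then unwind the two central extensions from the pull-back construction of Section~1. Since the cocycles of $\beta\circ\pi_E$ and $\beta\circ\pi_{E^*}$ are $g_{ij}\circ\pi_E$ and $g_{ij}\circ\pi_{E^*}$, which only depend on the base point, a direct check shows that
\[
R_{\beta\circ\pi_E}\cong E_\Omega\times_{X_\Omega}R_\beta
\qquad\text{and}\qquad
R_{\beta\circ\pi_{E^*}}\cong (E^*)_\Omega\times_{X_\Omega}R_\beta,
\]
where $X_\Omega=\bigsqcup_{i,j}\Omega_{ij}$ is the covering groupoid of $X$, $E_\Omega$ is the bundle of abelian groups obtained by pulling $E$ back along the index projection, $(E^*)_\Omega$ is the corresponding space, and the fibered product is taken over $X_\Omega$ with componentwise groupoid structure. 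The content of this identification is that the additive $E$-direction and the $S^1$-extension direction decouple, precisely because $g_{ij}\circ\pi_E$ factors through $\pi_E$; so the two sides differ only by the factor $E_\Omega$ versus its Pontryagin dual $(E^*)_\Omega$, while the extension $R_\beta$ is common to both.

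Next I would write the isomorphism explicitly at the level of convolution algebras. After a choice of leafwise Haar system on $E$ (or canonically via the half-density bundle), an element of $R_{\beta\circ\pi_E}$ may be written $(\xi,\tilde u)$ with $\xi\in E_x$ and $\tilde u\in U(H)$ over $g_{ij}(x)$, and I would define the fiberwise Fourier transform
\[
(\mathscr{F}f)(\eta,\tilde u)=\int_{E_x}f(\xi,\tilde u)\,e^{-i\langle\xi,\eta\rangle}\,d\xi,\qquad \eta\in E^*_x,\ x=\pi_{E^*}(\eta),
\]
which acts only in the $E$-variable $\xi$ and leaves the extension variable $\tilde u$ (hence the cocycle indices and the $S^1$-coordinate) fixed. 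The key computation is the standard one: in the twisted convolution of $R_{\beta\circ\pi_E}$ the integration over $\xi\in E_x$ is exactly fiberwise convolution over the abelian group $E_x$, while the $\tilde u$-part is governed by $R_\beta$; the transform $\mathscr{F}$ converts the former into fiberwise pointwise multiplication over the dual $E^*_x$ and commutes with the latter, since the twist is fiber-constant. This makes $\mathscr{F}$ a $*$-isomorphism from the twisted convolution algebra of $R_{\beta\circ\pi_E}$ onto that of $R_{\beta\circ\pi_{E^*}}$.

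Finally I would pass to the $C^*$-completions and track the $S^1$-action. Because $E\to X$ is a bundle of abelian, hence amenable, groups, the reduced and full completions of the $E$-factor coincide, and $\mathscr{F}$ carries the regular representation on $L^2(E_x)$ to the one on $L^2(E^*_x)$; hence $\mathscr{F}$ extends to an isomorphism $C_r^*(R_{\beta\circ\pi_E})\cong C_r^*(R_{\beta\circ\pi_{E^*}})$. Since $\mathscr{F}$ does not touch the extension variable, it is $S^1$-equivariant, so it respects the $\ZZ$-gradations of Remark \ref{decomposition}; restricting to the degree-one summand yields the asserted isomorphism $C_r^*(E,\beta\circ\pi_E)\cong C_r^*(E^*,\beta\circ\pi_{E^*})$, which specializes to Proposition \ref{twistedFourier} when $E=A\gr$.

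I expect the genuine obstacle to be bookkeeping rather than conceptual: one must organize the half-density (measure) conventions so that $\mathscr{F}$ is simultaneously compatible with the convolution on the group bundle $E_\Omega$ and with the cocycle twisting, and one must verify at the level of the reduced $C^*$-norm—not merely on the dense $*$-algebra—that the fiberwise Plancherel identification is compatible with the reduced norm of the full extension groupoid. All of this becomes routine once the fiber-constancy of the twist is used to decouple the Fourier variable from the extension, which is the one genuinely necessary observation.
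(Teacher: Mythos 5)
Your proposal is correct and follows essentially the same route as the paper: the same explicit fiberwise Fourier transform on $\ci_c(R_{\beta\circ\pi_E})$ leaving the $U(H)$-variable untouched, the same decoupling of the extension from the $E$-direction via fiber-constancy of the twist (the paper writes $R_{\beta\circ\pi_E}=R_\beta\times\RR^q$ in a trivializing chart, which is your fibered product over $X_\Omega$ localized), and the same conversion of fiberwise convolution into pointwise multiplication before passing to the reduced completion and tracking the $S^1$-grading. The only cosmetic difference is that the paper patches the nontrivial-bundle case through trivializing charts and a decomposition from \cite{Ca4}, where you phrase the decoupling globally.
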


\begin{proof}
We will give the explicit isomorphism $C_r^*(R_{\beta\circ \pi_E})\stackrel{\mathscr{F}}{\longrightarrow}
C_r^*(R_{\beta\circ \pi_{E^*}})$. It is defined at the level of $\ci_c(R_{\beta\circ \pi_E})$ as follows:
Let $f\in \ci_c(R_{\beta\circ \pi_E})$, and $[(\eta_{ij},u)]\in R_{\beta\circ \pi_{E^*}}
=\bigsqcup_{i,j}(E^*|_{\Omega_{ij}}\times_{PU(H)}U(H))$ (see diagram (\ref{twistedA*}) above). We let
\begin{equation}
\mathscr{F}(f)([(\eta_{ij},u)])=\int_{E_{\pi(\eta_{ij})}} e^{-i \eta_{ij}(X)}f([(X,u)])dX.
\end{equation}
For $[(\eta_{ij},u)]\in R_{\beta\circ \pi_E}
=\bigsqcup_{i,j}(E^*|_{\Omega_{ij}}\times_{PU(H)}U(H))$ and  $X\in E_{\pi(\eta_{ij})}$,  we have
\[
[(X,u)]\in R_{\beta\circ \pi_E}=
\bigsqcup_{i,j}(E|_{\Omega_{ij}}\times_{PU(H)}U(H))
\]
 by the definition of the morphisms ${\beta\circ \pi_E}$ (\ref{twistedA}) and ${\beta\circ \pi_{E^*}}$ (\ref{twistedA*}). It is also immediate that this function is $S^1$-equivariant. 

Now, as in the untwisted  case, the function $\mathscr{F}(f)$ is not compactly supported. But  it can be shown that it satisfies a Schwartz (rapid decaying) condition in the direction of the vector fibers of $E$ (see for instance proposition 4.5 in \cite{Ca4}). 

Let us first assume $E=X\times \mathbb{R}^q$. We remark that we are not assuming triviality of the twisiting $\beta$ on $X$. In this case we have a very clear description of the extension  groupoids:
First for $E\rightrightarrows X$,
$$R_{\beta\circ \pi_E}=R_{\beta}\times \mathbb{R}^q\rightrightarrows \bigsqcup_{i}\Omega_i,$$
where $\mathbb{R}^q$ is considered as an additive group and $R_{\beta}$ is the extension groupoid associated to $(X,\beta)$.
Then for $E^*\rightrightarrows E^*$,
$$R_{\beta\circ \pi_{E^*}}=R_{\beta}\times \mathbb{R}^q\rightrightarrows \bigsqcup_{i}\Omega_i \times \mathbb{R}^q,$$
where $\mathbb{R}^q\rightrightarrows \mathbb{R}^q$ is considered as an unit groupoid ($\mathbb{R}^q$ as a total space).
Then, under this situation, it is immediate that 
\begin{center}
$C_r^*(R_{\beta\circ \pi_E})=C_r^*(R_{\beta},(C_0(\mathbb{R}^q),*)),$
and
$C_r^*(R_{\beta\circ \pi_{E^*}})=C_r^*(R_{\beta},(C_0(\mathbb{R}^q),\cdot)),$
\end{center}
where in the first $C_0(\mathbb{R}^q)$ we have the convolution product and in the second the point-wise  product.
Since the product on $C_r^*(R_{\beta},(C_0(\mathbb{R}^q),*))$ is given by (at the $\ci_c(R_{\beta},(C_0(\mathbb{R}^q),*))$ level)
$$(f*g)(r)=f(r)*g(r),$$
we obtain, thanks to the continuity of the Fourier transform, that $\mathscr{F}$ gives an isomorphism 
$C_r^*(R_{\beta\circ \pi_E})\cong C_r^*(R_{\beta\circ \pi_{E^*}})$.

For a non-trivial vector bundle $E$, in order to see that $\mathscr{F}$ gives the desired isomorphism from $C_r^*(R_{\beta\circ \pi_E})$ to $C_r^*(R_{\beta\circ \pi_{E^*}})$ we may use trivializating charts of the vector bundle E and consider the correspondent decomposition of $C_r^*(R_{\beta\circ \pi_E})$ as in equation (15) in \cite{Ca4} (see Proposition 4.5 in loc.cit. for more details).

\end{proof}


Now we define the  index analytic morphism for the twisted groupoid $(\gr , \a)$ which agrees with the usual  analytic index morphism for the groupoid $\gr$ when the twisting $\sigma$ is trivial.

\begin{definition}\label{twistedindexmorphism}
Let $(\gr,\sigma)$ be a twisted Lie groupoid. The analytic index morphism for $(\gr,\sigma)$
\begin{equation}
\ind_{a, (\gr,\sigma)}:K^i(A^*\gr,\pi^*\sigma_0)\longrightarrow K^i(\gr,\sigma)
\end{equation}
is defined to be the composition of the Fourier isomorphism from proposition \ref{twistedFourier} and the degree one term of the index morphism $\ind_{\iota_{\sigma}}$ (\ref{indexRn}).
\end{definition}

\subsection{Propreties of the twisted analytic  index morphism}

In this section we will  establish  three properties of  twisted analytic  index morphisms. These are the analogs to the axioms stated in \cite{AS} and will allow us to prove a twisted index theorem for foliations and in general we can then talk of a twisted index theory for these objects.

{\bf Notation.} We recall for the benefit of the reader that $K^*(\gr,\a)$ denotes the K-theory group of the reduced twisted groupoid $C^*$-algebra $K_{-*}(C^*_r(\gr,\a))$ unless specified otherwise (definition \ref{twistedkth} above, see also \cite{TXL}). In particular if $\sigma$ is trivial we will be often using the notation $K^*(\gr)$ for the respective $K$-theory group of the reduced groupoid $C^*$-algebra. Also, when the groupoid is a space, this notation is consistent with the notation for the twisted topological K-theory.

In \cite{TXL} (Proposition 3.7), the Bott periodicity for twisted groupoids was stated, we prove now that the Bott morphism is compatible with the twisted analytic  index morphism.

\begin{proposition}\label{Botttwist}
The twisted analytic  index morphism  $ind_{a,(\gr,\sigma)}$ is compatible with the Bott morphism,
$\it{i.e.}$, the following diagram is commutative
\[
\xymatrix{
K^*(A^*\gr,\pi^*\sigma_0) \ar[rrr]^-{\ind_{a,(\gr,\sigma)}} \ar[d]_-{Bott}
& & & K^*(\gr,\sigma) \ar[d]^-{Bott} \\
K^*(A^*\gr\times,\Rr^2,\pi^*(\sigma\circ p)_0) \ar[rrr]_-{\ind_{a,(\gr\times \Rr^2,\sigma\circ p)}} & & & K^*(\gr\times \Rr^2,\sigma\circ p)
}
\]
where $p:\gr\times \Rr^2\longrightarrow \gr$ is the projection.
\end{proposition}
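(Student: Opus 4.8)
The plan is to exploit the fact that, when $\Rr^2$ is regarded as a unit groupoid $\Rr^2\rightrightarrows\Rr^2$ (so that its Lie algebroid vanishes), every ingredient entering the construction of $\ind_{a,(\gr,\sigma)}$ acquires a tensor factor $C_0(\Rr^2)$, while the two vertical Bott morphisms are realized as the external Kasparov product with the Bott generator $\beta\in KK(\C,C_0(\Rr^2))$. Since $A(\Rr^2\rightrightarrows\Rr^2)=0$, the algebroid of $\gr\times\Rr^2$ is $A\gr\times\Rr^2$ and, by the functoriality of the deformation to the normal cone, its tangent groupoid is canonically identified with the product $(\gr\times\Rr^2)^T\cong\gr^T\times\Rr^2$. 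By Proposition \ref{twist:tangentgoid}, under this identification the induced twisting $(\sigma\circ p)^T$ corresponds to the pullback of $\sigma^T$ along the projection $\gr^T\times\Rr^2\to\gr^T$.

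Next I would transport this to the level of the central extensions. Because the twisting $\sigma\circ p$ factors through $p:\gr\times\Rr^2\to\gr$ and is trivial in the $\Rr^2$-directions, the associated extensions split as $R_{\sigma\circ p}\cong R_{\sigma}\times\Rr^2$ and $R_{(\sigma\circ p)_0}\cong R_{\sigma_0}\times\Rr^2$, so that the $S^1$-equivariant immersion $\iota_{\sigma\circ p}$ is $\iota_{\sigma}\times\mathrm{id}_{\Rr^2}$ and the deformation groupoid of (\ref{defgroupoidext}) becomes $R_{\iota_{\sigma\circ p}}\cong R_{\iota_{\sigma}}\times\Rr^2$. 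Passing to reduced $C^*$-algebras, and using that $C_0(\Rr^2)$ carries the trivial $S^1$-action (so the $\ZZ$-gradation of Remark \ref{decomposition} is preserved and the tensor product is computed degree by degree), one obtains $C^*_r(R_{\iota_{\sigma\circ p}})\cong C^*_r(R_{\iota_{\sigma}})\otimes C_0(\Rr^2)$, under which the evaluation maps $ev_0,ev_1$ defining $\ind_{\iota_{\sigma\circ p}}$ in (\ref{HSindex}) become $ev_0\otimes\mathrm{id}$ and $ev_1\otimes\mathrm{id}$. Likewise the Fourier isomorphism of Proposition \ref{twistedFourier}, which acts only in the algebroid fibres, is $\mathscr{F}\otimes\mathrm{id}_{C_0(\Rr^2)}$. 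Restricting to the degree-one component of (\ref{indexRn}), this identifies $\ind_{a,(\gr\times\Rr^2,\sigma\circ p)}$ with $\ind_{a,(\gr,\sigma)}\otimes\mathrm{id}_{C_0(\Rr^2)}$.

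The conclusion is then purely formal naturality. For any $\ast$-homomorphism $\phi:A\to B$ one has $(\phi\otimes\mathrm{id})_*(x\otimes\beta)=\phi_*(x)\otimes\beta$, and the same identity holds for the inverse of an isomorphism such as $(ev_0)_*$ or $\mathscr{F}_*$, and for the canonical map $p_\ast:C^*(R_\sigma)\to C^*_r(R_\sigma)$. Applying this to each arrow in $\ind_{\iota_\sigma}=p_*\circ(ev_1)_*\circ(ev_0)_*^{-1}$ and to $\mathscr{F}$, one gets $\bigl(\ind_{a,(\gr,\sigma)}\otimes\mathrm{id}\bigr)(x\otimes\beta)=\ind_{a,(\gr,\sigma)}(x)\otimes\beta$, which is exactly the assertion that the two vertical Bott morphisms $-\otimes\beta$ intertwine the top and bottom analytic indices, i.e. the commutativity of the square.

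The step I expect to be the main obstacle is the first one: verifying cleanly that the whole tangent/normal-groupoid construction for $\gr\times\Rr^2$ is genuinely the product of that for $\gr$ with the unit groupoid $\Rr^2$ at the level of \emph{twisted} objects, that is, that $(\sigma\circ p)^T$ matches the pullback of $\sigma^T$ and that the central-extension $C^*$-algebra factors as a degree-preserving tensor product with an untwisted $C_0(\Rr^2)$. Once this splitting and its compatibility with the $\ZZ$-gradation of Remark \ref{decomposition} are established, the compatibility with the Bott morphism follows immediately from the bifunctoriality of the Kasparov product.
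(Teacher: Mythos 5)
Your proposal is correct and follows essentially the same route as the paper: identify the deformation (extension) groupoid for $\sigma\circ p$ with $R_{\iota_{\sigma}}\times\Rr^2$, realize the vertical maps as the external product with the Bott element of $C_0(\Rr^2)$, and conclude by naturality of that product with respect to $ev_0$, $ev_1$ and the Fourier isomorphism, all of which respect the $\ZZ$-grading under the $S^1$-action. The only difference is that you spell out in more detail the product splitting of the tangent groupoid and of the central extensions, which the paper simply declares immediate.
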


\begin{proof}
We use the same notations as above, in particular we note 
$R_{\iota_{\sigma}}= R_{\sigma_0}^{N}\times \{0\} \bigsqcup R_{\sigma}\times (0,1]$ the deformation groupoid (as in (\ref{defgroupoidext})) which gives  rise to the twisted index  morphism (\ref{indexRn}) for $(\gr,\sigma)$.  It is
immediate to see   that the corresponding  deformation groupoid for $\sigma\circ p$ is simply $R_{\iota_{\sigma}}\times \mathbb{R}^2$.

Now, the Bott isomorphism considered here is the usual Bott isomorphism for  K-theory of $C^*$-algebras, {\it i.e.}, given by  the product with the  Bott element in $K_0(C_0(\mathbb{R}^2))$. 
The product in K-theory (for $C^*$-algebras) is natural, so in particular it commutes with morphisms $ev_0$ and $ev_1$. Hence, the following diagram is commutative.
\begin{equation}\label{Botttwistdiag}
\xymatrix{
K^*(R_{\sigma_0}^{N})\ar@/^3pc/[rr]^{\ind_{\iota_{\sigma}}}\ar[d]^{\cong}_{Bott}& K^*(R_{\iota_{\sigma}})\ar[r]^{ev_1}\ar[l]_{ev_0}^{\cong}\ar[d]^{\cong}_{Bott}&
K^*(R_{\sigma})\ar[d]^{\cong}_{Bott}\\
K^*(R_{\sigma_0}^{N}\times \mathbb{R}^2)\ar@/_3pc/[rr]_{\ind_{\iota_{\sigma}\times \mathbb{R}^2}}& 
K^*(R_{\iota_{\sigma}}\times \mathbb{R}^2)\ar[r]_{ev_1}\ar[l]^{ev_0}_{\cong}&
K^*(R_{\sigma}\times \mathbb{R}^2)
}
\end{equation}
All the morphisms in the diagram above respect the  $\ZZ$-grading under $S^1$-action, hence the proposition is proved.
\end{proof}

The second property is related with the inclusions of open subgroupoids. Let $\gr \rightrightarrows \go$ be a Lie groupoid and
$\hr \rightrightarrows \ho$ be an open subgroupoid. We have the following compatibility result:

\begin{proposition}\label{opentwist}
Let $\hr\stackrel{j}{\hookrightarrow}\gr$ an inclusion as an open subgroupoid.
The following diagram is commutative:
\[
\xymatrix{
K^*(A^*\hr,\pi^*(\sigma\circ j)_0) \ar[rrr]^{\ind_{a,(\hr,\sigma\circ j)}} \ar[d]_{j_!}
& & & K^*(\hr,\sigma\circ j) \ar[d]^{j_!} \\
K^*(A^*\gr,\pi^*\sigma_0) \ar[rrr]_{\ind_{a,(\gr,\sigma)}} &&  & K^*(\gr,\sigma).
}
\]
where the vertical maps are induced from the inclusions by the open inclusion $j$.
\end{proposition}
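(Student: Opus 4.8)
The plan is to adapt the deformation-groupoid argument of Proposition \ref{Botttwist}, replacing the external product by $\Rr^2$ with the open inclusion $j$; the underlying principle is that the entire construction of the twisted analytic index is functorial for open inclusions. First I would record that an open inclusion $\hr \stackrel{j}{\hookrightarrow} \gr$ induces an open inclusion $\ho \hookrightarrow \go$ of unit spaces, and that the twisting $\sigma\circ j$ is simply the restriction of $\sigma$ to the open part. Consequently the central extension $R_{\sigma\circ j}$ (resp. $R_{(\sigma\circ j)_0}$) is the restriction of $R_{\sigma}$ (resp. $R_{\sigma_0}$), and the immersion $\iota_{\sigma\circ j}: R_{(\sigma\circ j)_0}\hookrightarrow R_{\sigma\circ j}$ is the restriction of $\iota_{\sigma}$. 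The vertical maps $j_!$ are, by definition, the maps on twisted K-theory induced by the inclusions of convolution algebras by extension by zero, namely $C^*_r(\hr,\sigma\circ j)\to C^*_r(\gr,\sigma)$ on the right and, after the Fourier isomorphism of Proposition \ref{twistedFourier}, the analogous inclusion coming from the open embedding $A^*\hr\subset A^*\gr$ on the left.

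Next I would invoke the functoriality of the deformation to the normal cone under open embeddings: the functor $\Dnc$ of (\ref{fundnc}) sends an open embedding of pairs to an open embedding. Applied to the immersion of central extensions $\iota_{\sigma\circ j}\hookrightarrow \iota_{\sigma}$, this shows that the deformation groupoid
\[
R_{\iota_{\sigma\circ j}} = R_{(\sigma\circ j)_0}^{N}\times\{0\}\bigsqcup R_{\sigma\circ j}\times(0,1]
\]
is an open subgroupoid of $R_{\iota_{\sigma}}$ (as in (\ref{defgroupoidext})), compatibly with the two evaluations $ev_0$ and $ev_1$. The resulting extension-by-zero inclusion $C^*(R_{\iota_{\sigma\circ j}})\to C^*(R_{\iota_{\sigma}})$ therefore intertwines the evaluation morphisms, yielding the commutative diagram
\[
\xymatrix{
K^*(R_{(\sigma\circ j)_0}^{N}) \ar[d]_{j_!} & K^*(R_{\iota_{\sigma\circ j}}) \ar[l]_{ev_0}^{\cong} \ar[r]^{ev_1} \ar[d]_{j_!} & K^*(R_{\sigma\circ j}) \ar[d]^{j_!} \\
K^*(R_{\sigma_0}^{N}) & K^*(R_{\iota_{\sigma}}) \ar[l]^{ev_0}_{\cong} \ar[r]_{ev_1} & K^*(R_{\sigma})
}
\]
Since $\ind_{\iota_{\sigma}}=(ev_1)_*\circ(ev_0)_*^{-1}$ and likewise for $\iota_{\sigma\circ j}$, and $ev_0$ is an isomorphism on both rows by the contractibility argument behind (\ref{segtimm}), the commutativity of this diagram gives $j_!\circ\ind_{\iota_{\sigma\circ j}}=\ind_{\iota_{\sigma}}\circ j_!$.

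Finally I would observe, exactly as at the end of Proposition \ref{Botttwist}, that every map above is $S^1$-equivariant and hence respects the $\ZZ$-grading of Remark \ref{decomposition} (and the splitting (\ref{indexRn})); restricting to the degree-one summand produces the statement for the twisted analytic index morphisms $\ind_{a,(\hr,\sigma\circ j)}$ and $\ind_{a,(\gr,\sigma)}$, once one notes that the Fourier isomorphism of Proposition \ref{twistedFourier} is itself natural for the restriction to the open subset $A^*\hr\subset A^*\gr$, so that the left-hand $j_!$ in the proposition matches the left-hand $j_!$ of the diagram above.

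The main obstacle I expect is verifying carefully that the abstractly-defined maps $j_!$ (extension-by-zero inclusions of twisted convolution $C^*$-algebras) genuinely coincide with the open-inclusion maps arising in the deformation picture at $t=0$ and $t=1$, and that the inclusion $R_{\iota_{\sigma\circ j}}\hookrightarrow R_{\iota_{\sigma}}$ is an open subgroupoid over the whole deformation parameter so that the extension-by-zero $*$-homomorphism is well defined and strictly commutes with the evaluations. Equivalently, the crux is checking that the normal bundle of $R_{(\sigma\circ j)_0}$ in $R_{\sigma\circ j}$ is precisely the restriction of that of $R_{\sigma_0}$ in $R_{\sigma}$ to the open part, which rests on the functoriality of $\Dnc$ for open embeddings together with the fact that restricting a central extension to an open subgroupoid recovers the corresponding central extension.
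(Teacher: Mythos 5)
Your proposal follows the paper's own argument essentially verbatim: the paper likewise observes that $R_{\iota_{\sigma\circ j}}$ is an open subgroupoid of $R_{\iota_{\sigma}}$, writes down the same commutative diagram intertwining $ev_0$ and $ev_1$ via the extension-by-zero maps $j_!$, and concludes by noting that all morphisms respect the $\ZZ$-grading under the $S^1$-action. Your additional remarks on the functoriality of $\Dnc$ for open embeddings and the naturality of the Fourier isomorphism are correct elaborations of points the paper leaves implicit.
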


\begin{proof}
We use again the notation of the proof of the proposition above. We observe this time that the deformation groupoid $R_{\iota_{\sigma\circ j}}$ which gives the twisted index (\ref{indexRn}) for  $(\hr, \sigma\circ j)$,  is an open subgroupoid of $R_{\iota_{\sigma}}$. The following diagram is obviously commutative.
\begin{equation}\label{opentwistdiag}
\xymatrix{
K^*(R^{N}_{\sigma_0\circ j})\ar@/^3pc/[rr]^{\ind_{\iota_{\sigma \circ j}}}\ar[d]_{j_!}& K^*(R_{\iota_{\sigma\circ j}})\ar[r]^{ev_1}\ar[l]_{ev_0}^{\cong}\ar[d]_{j_!}&
K^*(R_{\sigma \circ j})\ar[d]_{j_!}\\
K^*(R^{N}_{\sigma_0})\ar@/_3pc/[rr]_{\ind_{\iota_{\sigma}}}& K^*(R_{\iota_{\sigma}})\ar[r]_{ev_1}\ar[l]^{ev_0}_{\cong}
&K^*(R_{\sigma})
}
\end{equation}
The proposition easily follows from the diagram above and the fact that all the morphisms in (\ref{opentwistdiag}) respect the $\ZZ$-grading under $S^1$-action.
\end{proof}


The last property (Proposition \ref{Thomtwist} below)    involves in some way the compatibility of the twisted index  morphism   with the product in twisted K-theory. The idea is originated  from Theorem 6.2 in \cite{DLN} and the work  of Connes on tangent groupoids (\cite{Concg}).

\subsubsection{Thom inverse morphism  in twisted K-theory}

Let $N\longrightarrow X$ be a real vector bundle. Consider the fiber product groupoid
$N\times_XN\rightrightarrows N$ over $X$ whose  Lie algebroid is
$N\oplus N\stackrel{\pi_N}{\longrightarrow} N$.  Note that the groupoid $N\times_XN\rightrightarrows N$ is Morita equivalent to the  unit  groupoid $X\rightrightarrows X$, that is,  there is an isomorphism in the Hilsum-Skandalis category 
$$N\times_X N\stackrel{\mathscr{M}}{\longrightarrow}X.$$

 Debord-Lescure-Nistor showed in \cite{DLN} (Theorem 6.2) that, modulo Morita equivalence isomorphism, the  analytic index  morphism  for the groupoid $N\times_XN\rightrightarrows N$
$$
\xymatrix{
K^*(N\oplus N^*)\ar[rr]^{\ind_{a,N\times_XN}} && K^*(N\times_XN),
}
$$ is the inverse of the Thom isomorphism for the 
vector bundle $ N\oplus N\stackrel{\pi_X}{\longrightarrow} X$. In other words the following diagram is commutative
\begin{equation}
\xymatrix{
K^*(N\oplus N^*) \ar[d]_{\ind_{a,N\times_XN}} \ar[r]^-{=}
 &K^*(N\oplus N^*) \ar[d]^-{Thom^{-1}}    \\
K^*(N\times_XN)\ar[r]_-{\mathscr{M}}&  K^*(X).
}
\end{equation}
We will briefly recall the steps of the proof of the above assertion since we are going to follow similar steps in the twisted case.
We recall that analytic indices for groupoids are realized by  deformation groupoids. This point of view is essential for the proof of the above assertion in \cite{DLN}. Indeed, they consider the tangent groupoid of $N\times_XN$, which they called the Thom groupoid, denoted by $\mathscr{T}_N$.
Hence the above diagram takes the following form
\begin{equation}
\xymatrix{
K^*(N\oplus N) \ar[d]_{ev_0^{-1}}  \ar[rr]_-{Fourier}^-\cong
 & &K^*(N\oplus N^*) \ar[dd]^-{Thom^{-1}}_-{\cong}  \\
K^*(\mathscr{T}_N)\ar[d]_{ev_1} &&\\
K^*(N\times_XN)\ar[rr]_-{\mathscr{M}}^-{\cong}& & K^*(X).
}
\end{equation}
They consider first the morphism  $T_0:=Fourier\circ Thom:K^*(X)\longrightarrow K^*(N\oplus N)$, the first pointed arrow below (in terms of KK-elements). Then, the main part of their proof is to  construct the morphism $T: K^*(X)\longrightarrow K^*(\mathscr{T}_N)$ such that its  corresponding  evaluations make   the following  diagram  commute.
\begin{equation}
\xymatrix{
K^*(N\oplus N) \ar[d]_{ev_0^{-1}}  \ar[rr]_-{Fourier}^-\cong
 & &K^*(N\oplus N^*) \ar[dd]^-{Thom^{-1}}_-{\cong}  \\
K^*(\mathscr{T}_N)\ar[d]_{ev_1} &&\\
K^*(N\times_XN)\ar[rr]_-{\mathscr{M}}^-{\cong}& & K^*(X) \ar@{.>}[lluu]_{T_0}\ar@{.>}[llu]^T.
}
\end{equation}

Let us now discuss the twisted case. Given a  twisting  $\beta = \{(\Omega_i, \beta_{ij})\}$ on the space 
$X$ with an open cover $\Omega =\{\Omega_i\}$ of $X$. The bundle $N\oplus N^*$ has  a complex structure, hence we have
 the Thom isomorphism (\cite{CW}, \cite{Kar08}) in twisted K-theory
\begin{equation}\label{twisted:Thom}
\xymatrix{
Thom_\b :  K^i(X,\beta)\ar[r]^-{\cong} 
&K^i(N\oplus N^*,\pi_X^*\beta),
}
\end{equation}
where $N\oplus N^*$ is seen as a total space   and $\pi_X^*\beta$ is a twisting in the classic sense.

We will give now a KK-theoretic description of this isomorphism (modulo Fourier isomorphism ). 
Consider the associated groupoid $N\oplus N\rightrightarrows X$ (using the vector bundle structure) and the corresponding groupoid twisting $\beta\circ \pi_X$, where $\pi_X: N\oplus N\rightarrow X$ is seen as a groupoid morphism.  
Consider the $S^1$-central extension associated to the twisted groupoid $(N\oplus N,\beta\circ \pi_X)$:
\begin{equation}
S^1\longrightarrow R_{N\oplus N,\beta}\longrightarrow (N\oplus N)_{\beta}
\end{equation}
Hence,  
\begin{center}
$(N\oplus N)_{\beta}=\bigsqcup_{i,j}(N\oplus N)_{ij}\rightrightarrows \bigsqcup_i\Omega_i$ and 
$R_{N\oplus N,\beta}=\bigsqcup_{i,j}(N\oplus N)_{ij}\times_{PU(H)}U(H)\rightrightarrows \bigsqcup_i\Omega_i$,
\end{center}
where the pullback is taken by  the groupoid morphism  $(N\oplus N)_{\beta}\stackrel{\beta\circ \pi_X}{\longrightarrow}PU(H)$. 
 Similarly, we have  the  $S^1$-central  extension associated to $(X,\beta)$:
\begin{equation}
S^1\longrightarrow R_{X,\beta}\longrightarrow X_{\beta}, 
\end{equation}
where $X_{\beta}=\bigsqcup_{i,j}\Omega_{ij}\rightrightarrows \bigsqcup_i\Omega_i$ and 
$R_{X,\beta}=\bigsqcup_{i,j}\Omega_{ij}\times_{PU(H)}U(H)\rightrightarrows \bigsqcup_i\Omega_i$.


We are going to describe an element of $KK^*(R_{X,\beta},R_{N\oplus N,\beta})$ which realises the twisted Thom isomorphism (\ref{twisted:Thom}) modulo Fourier isomorphism. 
Let $q:  R_{N\oplus N,\beta}\to X$ be the composition of the obvious maps 
$$R_{N\oplus N,\beta}\rightarrow (N\oplus N)_{\beta}\rightarrow \bigsqcup_i\Omega_i\rightarrow X.$$
Let us now construct the  $(C^*(R_{X,\beta})-C^*(R_{N\oplus N,\beta}))$-Kasparov bimodule $\mathscr{H}_0$. It is the completion of 
$\ci_c(R_{N\oplus N,\beta},q^*\Lambda (N\oplus N)\otimes \Omega^{\frac{1}{2}})$ with respect to its $\ci_c(R_{N\oplus N,\beta},\Omega^{\frac{1}{2}})\subset C^*(R_{N\oplus N,\beta})$ valued inner product:
\begin{center}
$\langle \xi,\eta\rangle(\gamma)=\int_{\gamma_1\cdot \gamma_2=\gamma} \langle \xi(\gamma_1),\eta(\gamma_2))\rangle_{N\oplus N}$, \, \, for $\xi,\eta \in \ci_c(R_{N\oplus N,\beta},q^*\Lambda (N\oplus N)\otimes \Omega^{\frac{1}{2}})$,
\end{center}
where $\langle, \rangle$ denotes the interior product on the (canonical) Hermitian bundle $N\oplus N$, and  the integration is as usual over the resultant 1-density.

The right module structure is given by
\begin{equation}\nonumber
(\xi f)(\gamma)=\int_{\gamma_1\cdot \gamma_2=\gamma} \xi(\gamma_1)f(\gamma_2),
\end{equation}
for $\xi \in \ci_c(R_{N\oplus N,\beta},q^*\Lambda (N\oplus N)\otimes \Omega^{\frac{1}{2}})$ and $f\in \ci_c(R_{N\oplus N,\beta},\Omega^{\frac{1}{2}})$.

The left $\ci_c(R_{X,\beta})$ action on $\ci_c(R_{N\oplus N,\beta},q^*\Lambda (N\oplus N)\otimes \Omega^{\frac{1}{2}})$  is given by
\begin{equation}\nonumber
(g \xi)(\gamma)=g(\pi(\gamma))\cdot \xi(\gamma),   
\end{equation}
for $\xi \in \ci_c(R_{N\oplus N,\beta},q^*\Lambda (N\oplus N)\otimes \Omega^{\frac{1}{2}})$ and $g\in \ci_c(R_{X,\beta})$, where $\pi:\bigsqcup_{ij}(N\oplus N)_{ij}\longrightarrow \bigsqcup_{ij}\Omega_{ij}$ is the canonical projection.

We let $F_0$ to be the endomorphism of $\mathscr{H}_0$, densely defined on $\ci_c(R_{N\oplus N,\beta},
q^*\Lambda (N\oplus N)\otimes \Omega^{\frac{1}{2}})$ by 
\begin{equation}
F_0s(v,w)=\int_{(w',v')\in N_x\times N_x^*}e^{i\,(w-w')\cdot v'}C(v+i v')s(v,w')dw'dv',
\end{equation}
where $C$ denotes the   Clifford  action of $N\oplus N$ on $\Lambda (N\oplus N)$.

\begin{proposition}\label{DLN61}
With the notations above, denote by 
$$
\xymatrix{
T_\b :  K^i(X,\beta)\ar[r] 
&K^i(N\oplus N,\pi_X^*\beta),
}
$$
the degree one morphism (with respect to gradation (\ref{indexRn})) induced by the KK-element $[\mathscr{H}_0,F_0]\in KK^*(R_{X,\beta},R_{N\oplus N,\beta})$. The following diagram commutes
$$
\xymatrix{
K^i(X,\beta)\ar[d]_-{=}\ar[r]^-{T_\b} 
&K^i(N\oplus N,\pi_X^*\beta)\ar[d]^-{Fourier \, \cong}\\
 K^i(X,\beta)\ar[r]_-{Thom_\b}^-{\cong} 
& K^i(N\oplus N^*,\pi_X^*\beta). 
}
$$
In particular the morphism $T_\b$ is invertible.
\end{proposition}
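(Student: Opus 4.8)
\emph{Strategy.} The plan is to recognise the Kasparov module $[\mathscr{H}_0,F_0]$ as the Fell-bundle (twisted) incarnation of the classical Kasparov Bott element, whose underlying operator $F_0$ is nothing but fibrewise Clifford multiplication composed with the fibrewise Fourier transform on $N\oplus N$. Once this is understood, commutativity of the square reduces to the untwisted computation of Debord--Lescure--Nistor, the twisting being merely carried along because it is pulled back from the base $X$. Concretely I would proceed in three steps: first check that $[\mathscr{H}_0,F_0]$ is a genuine $S^1$-equivariant $KK$-element; then establish $Fourier\circ T_\b=Thom_\b$ by reducing to a local trivialisation of $N$ and identifying the element with an external product $1\otimes \mathbf{b}$; and finally patch the local identifications and deduce invertibility of $T_\b$.

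\emph{Step 1.} The verification that $(\mathscr{H}_0,F_0)$ defines an element of $KK^*(R_{X,\b},R_{N\oplus N,\b})$ is formally identical to the untwisted case. The left $\ci_c(R_{X,\b})$-action is through the base projection $\pi$, so it commutes with all the fibre operations; hence the compactness of $F_0^2-1$, the self-adjointness of $F_0$ modulo compacts, and the compactness of the commutators $[F_0,g]$ for $g\in \ci_c(R_{X,\b})$ all follow from the corresponding fibrewise statements for the classical Bott operator on $N\oplus N$, exactly as in \cite{DLN}. The whole construction is $S^1$-equivariant (the circle acts only in the $U(H)$-direction, untouched by $F_0$), so $[\mathscr{H}_0,F_0]$ respects the $\ZZ$-gradation of Remark \ref{decomposition}, and we may work on its degree-one component throughout.

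\emph{Step 2.} Following the proof of the preceding proposition, I would first treat the trivial bundle $N=X\times \Rr^q$, without assuming $\b$ trivial. There $R_{N\oplus N,\b}=R_{\b}\times \Rr^{2q}$, so that $C^*(R_{N\oplus N,\b})\cong C^*(R_{X,\b})\otimes (C_0(\Rr^{2q}),\ast)$ with convolution in the fibre, and the data $(\mathscr{H}_0,F_0)$ splits as an external Kasparov product $1_{C^*(R_{X,\b})}\otimes \mathbf{b}$, where $\mathbf{b}$ is the classical Kasparov Bott element for $\Rr^{2q}\cong \C^q$ built from the same Clifford and Fourier ingredients. The Kasparov product with $1\otimes\mathbf{b}$ is, by the untwisted computation of \cite{DLN} tensored with the identity on the twisted factor, the external Bott map; after the fibrewise Fourier identification of the previous proposition this is precisely the external product defining the twisted Thom isomorphism $Thom_\b$ of \cite{CW,Kar08}, since the latter is, by construction, the cap product with the (untwisted) Thom class and hence factors through the fibre. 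This yields $Fourier\circ T_\b=Thom_\b$ over a trivialising chart.

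\emph{Step 3 and conclusion.} For a general $N$ I would globalise by covering $X$ with trivialising charts and using the local decomposition of $C^*(R_{N\oplus N,\b})$ employed in the previous proposition (cf. Proposition 4.5 in \cite{Ca4}); both the assignment $(\mathscr{H}_0,F_0)$ and the Thom isomorphism are natural with respect to the $PU(H)$-cocycle transitions, so the chartwise identities of Step 2 patch to the global equality $Fourier\circ T_\b=Thom_\b$ on degree-one components. Since $Thom_\b$ is an isomorphism by \cite{CW,Kar08} and the Fourier map is an isomorphism by the previous proposition, it follows at once that $T_\b$ is invertible. \emph{The main obstacle} I anticipate is Step 2: matching the concrete module $[\mathscr{H}_0,F_0]$ with the abstractly defined $Thom_\b$ requires knowing that the twisted Thom isomorphism respects the external-product structure, so that it can be computed fibrewise with the pulled-back twisting simply carried along; making this compatibility precise, rather than the routine analytic estimates on $F_0$, is the crux of the argument.
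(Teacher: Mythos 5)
Your proposal is correct in outline but takes a genuinely different route from the paper. The paper's proof is a two-line reduction: when $\beta$ is trivial the statement \emph{is} Proposition~6.1 of \cite{DLN}, and the general case is then obtained by localizing the \emph{twisting} (a $PU(H)$-cocycle is locally trivial) and bootstrapping with the Mayer--Vietoris sequence in twisted K-theory for groupoids (\cite{TXL}, Proposition~3.9). You instead localize the \emph{vector bundle} $N$ while keeping $\beta$ arbitrary, use the decomposition $C^*(R_{N\oplus N,\beta}|_\Omega)\cong C^*(R_{X,\beta}|_\Omega)\otimes (C_0(\Rr^{2q}),\ast)$ to exhibit $[\mathscr{H}_0,F_0]$ chartwise as an external product $1\otimes \mathbf{b}$, and invoke the fact that $Thom_\beta$ of \cite{CW} is cap product with the untwisted Thom class of $N\otimes\CC$, hence also factors through the fibre. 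This is more informative --- it explains \emph{why} the twisting is merely carried along --- and your Steps 1 and 2 are sound. The one place you should be more careful is Step 3: ``naturality with respect to the cocycle transitions'' does not by itself let you conclude that two maps on twisted K-theory which agree over each chart of a cover agree globally, since K-theory classes are not local objects; the rigorous way to patch is precisely a Mayer--Vietoris/five-lemma induction over a finite trivializing cover, i.e.\ the same tool (\cite{TXL}, Proposition~3.9) that constitutes the entirety of the paper's argument for the general case. With that substitution your proof closes, and the invertibility of $T_\beta$ follows as you say from the invertibility of $Thom_\beta$ and of the Fourier isomorphism.
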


\begin{proof}
When the twisting $\beta$ is trivial, the statement reduces precisely to the one on proposition 6.1 in \cite{DLN}, together with comments and remarks below it. Then, in this case we already have the result. For general case, the claim follows from  the Mayer-Vietoris sequence in twisted K-theory for groupoids in \cite{TXL} proposition 3.9.
\end{proof}

We can state the following proposition which is the twisted analog of Theorem 6.2 in \cite{DLN}. 

\begin{proposition}\label{Thomtwist}
With the above notations we have the following commutative diagram
\begin{equation}
\xymatrix{
K^i(N\oplus N^*,\pi^*\beta) \ar[d]_-{\ind_{a,(N\times_XN, \beta \circ \mathscr{M})}}  \ar[rr]^-{Fourier \ \cong }
  &&  K^i(N\oplus N,\pi^*\beta)  \ar[d]^-{T_\b^{-1}}
   \\
K^i(N\times_XN, \beta \circ \mathscr{M})\ar[rr]_-{Morita \ \cong }& & K^i(X,\beta),
}
\end{equation}
\end{proposition}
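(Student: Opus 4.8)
The plan is to mirror, in an $S^1$-equivariant fashion, the proof of Theorem 6.2 in \cite{DLN} recalled above, and then to extract the degree-one component with respect to the $\mathbb{Z}$-grading of Remark \ref{decomposition}. By Proposition \ref{twist:tangentgoid} the twisting $\beta\circ\mathscr{M}$ extends canonically to the tangent (Thom) groupoid $\mathscr{T}_N=N\oplus N\times\{0\}\bigsqcup N\times_X N\times \Rr^*$, giving an $S^1$-central extension $R_{\mathscr{T}_N,\beta}$ whose evaluations are exactly the deformation defining $\ind_{a,(N\times_X N,\beta\circ\mathscr{M})}$. Thus the left vertical map of the diagram is, by Definition \ref{twistedindexmorphism}, the degree-one part of $(ev_1)_*\circ(ev_0)_*^{-1}$ for this extension, post-composed with the Fourier isomorphism of Proposition \ref{twistedFourier}.

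First I would construct a Kasparov bimodule $[\mathscr{H},F]\in KK^*(R_{X,\beta},R_{\mathscr{T}_N,\beta})$ interpolating the data of Proposition \ref{DLN61}: its fibre at $t=0$ is the pair $[\mathscr{H}_0,F_0]$ realizing $T_\beta$, while for $t\neq 0$ it is the bimodule realizing the Morita equivalence $\mathscr{M}$. Concretely $\mathscr{H}$ is the completion of $\ci_c(R_{\mathscr{T}_N,\beta},q^*\Lambda(N\oplus N)\otimes \Omega^{\frac{1}{2}})$ and $F$ is the family interpolating the Clifford-type operator $F_0$ at $t=0$ with the operator of the Morita bimodule at $t=1$, exactly as in the construction of the interpolating morphism $T$ in \cite{DLN}. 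Being built fibrewise from the $S^1$-equivariant pieces, this bimodule respects the $\mathbb{Z}$-grading, and its evaluations $ev_0$, $ev_1$ recover respectively $Fourier\circ Thom_\beta$ (by Proposition \ref{DLN61}) and the Morita-composed index.

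With such an element in hand the commutativity is formal: since $(ev_0)_*$ is an isomorphism, $(ev_1)_*\circ(ev_0)_*^{-1}$ carries the class at $t=0$ to the class at $t=1$, which is precisely the statement that $\ind_{a,(N\times_X N,\beta\circ\mathscr{M})}$ followed by the Morita isomorphism equals $T_\beta^{-1}$ followed by the Fourier isomorphism. Passing to the degree-one part for the $S^1$-action then gives the asserted diagram in twisted $K$-theory.

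The hard part will be to check that the interpolating family $F$ is a genuine Kasparov bimodule over the whole deformation, i.e. that $F^2-1$ and the commutators $[F,a]$ lie in the compacts uniformly as $t\to 0$, and that the module structure is continuous across $t=0$, all while remaining $S^1$-equivariant. Rather than verifying these estimates directly, I would reduce them to the established untwisted case: over each chart $\Omega_i$ the central extension $R_{\mathscr{T}_N,\beta}$ splits and $[\mathscr{H},F]$ becomes the untwisted interpolating module of \cite{DLN}, so the required estimates hold there, and the global statement follows from the Mayer--Vietoris sequence in twisted $K$-theory of \cite{TXL} (Proposition 3.9), exactly as in the proof of Proposition \ref{DLN61}. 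The naturality of the four maps under the open inclusions needed to run Mayer--Vietoris is provided by Proposition \ref{opentwist} for the index, together with the explicit local descriptions of $T_\beta$, of the Fourier isomorphism, and of $\mathscr{M}$.
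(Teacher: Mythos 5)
Your overall strategy is the paper's: build an interpolating Kasparov bimodule $[\mathscr{H},F]$ over the $S^1$-central extension of the Thom (tangent) groupoid of $N\times_XN$, note that its evaluation at $t=0$ is $[\mathscr{H}_0,F_0]$ and hence recovers $T_\beta$, and then conclude from $\ind_{a}=(ev_1)_*\circ(ev_0)_*^{-1}$. The gap is at $t\neq 0$. You assert that the fibre of $[\mathscr{H},F]$ there \emph{is} the bimodule realizing the Morita equivalence $\mathscr{M}$, and on that basis declare the commutativity formal. But the natural interpolating operator --- the rescaled oscillatory integral with Clifford symbol, which is what the paper writes down and what \cite{DLN} uses --- evaluates at $t=1$ to a concrete Fredholm family $F_1$ acting on the $L^2$-spaces of the fibres $N_x$; it is not the Morita imprimitivity bimodule on the nose. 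One must still compute the class of $T_{\beta^{tan}}\otimes e_1\otimes \mathscr{M}$ in $KK(R_{X,\beta},R_{X,\beta})$ and show it is the unit; this is exactly the statement ``$\mathscr{M}\circ e_1\circ T_{\beta^{tan}}=\mathrm{id}$'' that makes the triangle close, and it is the analytic core of the argument. The paper handles it by identifying this class with the pair $(H_{\Lambda E},F_1)$ and invoking Lemma 2.4 of \cite{CS} (the index-one computation for the Bott--Dirac type family). Without some version of that computation, ``$ev_1$ recovers the Morita-composed index'' is precisely what is to be proved, not an observation.

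A secondary remark: you place the ``hard part'' in the Kasparov-module estimates for $F$ across $t=0$ and propose a chart-by-chart reduction plus Mayer--Vietoris. That is reasonable --- locally the central extension splits, so the untwisted estimates of \cite{DLN} apply, and the paper itself uses this Mayer--Vietoris reduction one step earlier, in the proof of Proposition \ref{DLN61}. But this is the routine part of the argument; the missing $t=1$ identification above is where the proof actually lives, and your proposal should be completed by carrying out (or at least correctly citing) that step in the twisted, $S^1$-equivariant setting.
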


\begin{proof}
Consider the deformation groupoid which realizes the twisted index morphism for  $(N\times_XN, \beta \circ \mathscr{M})$: 
$$R_{\beta^{tan}}=R_{N\oplus N, \pi^* \b}\times \{0\}\bigsqcup R_{N\times_X N,\beta \circ \mathscr{M}}\times (0,1].$$
That is, its deformation index gives the element $[ev_0]^{-1}\otimes [ev_1]\in KK(R_{N\oplus N,  \pi^*\b },R_{N\times_X N,\beta \circ \mathscr{M}})$ whose degree one  induces the analytical index morphism $\ind_{a,(N\times_XN, \beta \circ \mathscr{M})}$, modulo the Fourier isomorphism.

We will define a KK-element in $KK(R_{X,\beta},R_{\beta^{tan}})$ in the following way: let $\mathscr{H}$ be the 
$C^*(R_{\beta^{tan}})$-Hilbert module completion of $\ci_c(R_{\beta^{tan}},q^*\Lambda(N\oplus N))$ where $q:R_{\beta^{tan}}\rightarrow X$ is the obvious projection (using the target map of $R_{\beta^{tan}}$ for instance).
The endomorphism of $\mathscr{H}$ is given by
\begin{equation}
Fs(v,w,t)=\int_{(w',v')\in N_x\times N_x^*}e^{\frac{w-w'}{t}\cdot v'}C(v+i v')s(v,w')dw'\frac{dv'}{t^n}.
\end{equation}
for $t\neq 0$ (where $n=dim_{vect}N$) and by $F_0$ for $t=0$. We obtain an element $[\mathscr{H},F]\in KK(R_{X,\beta},R_{\beta^{tan}})$. Denote by $T_{\beta^{tan}}:K(R_{X,\beta})\longrightarrow K(R_{\beta^{tan}})$ the induced morphism in K-theory. It fits by construction in the following commutative diagram:
\begin{equation}
\xymatrix{
&&K(R_{N\oplus N,\beta})\\K(R_{X,\beta})\ar[rur]|{T_{\beta}}\ar[rr]|{T_{\beta^{tan}}}
\ar[rrd]|-{e_1\circ T_{\beta^{tan}}}&&K(R_{\beta^{tan}})\ar[u]_-{e_0}\ar[d]^{e_1} \\&&K(R_{N\times_XN,\beta}) 
}
\end{equation}
To conclude the proof it is enough to show that $\mathscr{M}\circ e_1\circ T_{\beta^{tan}}$ gives the identity in $K(R_{X,\beta})$.  Now, as a KK-element, 
$T_{\beta^{tan}}\otimes e_1\otimes \mathscr{M}\in KK(R_{X,\beta},R_{X,\beta})$ can be represented by $(H_{\Lambda E},F_1)$ where 
$$H_{\Lambda E}=(L^2(\bigsqcup_{\{i\in I, x\in \Omega_i\}}N_x\times S^1, \Lambda E))_{x\in \bigsqcup \Omega_j},$$
and the vector bundle is the pullback of $\Lambda E$ by the canonical projection 
$\bigsqcup_{\{i\in I, x\in \Omega_i\}}N_x\times S^1\rightarrow \bigsqcup_i\Omega_i\rightarrow X$.
The operator $F_1$ is as the operator F evaluated at $t=1$ but identified with a continuous family of Fredholm operators acting on $L^2(\bigsqcup_{\{i\in I, x\in \Omega_i\}}N_x\times S^1, \Lambda E)$ by
$$F_1s(x,v)=\int_{(w',v')\in N_x\times N_x^*}e^{i\, (v-w')\cdot v'}C(v+i v')s(x,w')dw'dv'.$$
The lemma 2.4 in \cite{CS} applies here and we have then that $(H_{\Lambda E},F_1)$ defines the unit element of $KK(R_{X,\beta},R_{X,\beta})$.
\end{proof}

\section{Longitudinal twisted index theorem for foliations}

Consider the case of a regular foliation $(M,F)$   with a twisting $\sigma$ on its leaf space $M/F$ in the sense of Definition \ref{twistingleaves}, 
by using the holonomy groupoid $\gr$ of $(M, F)$. The induced twisting on  $M$ is denoted by $\sigma_0$.  Recall that the Lie algebroid  $A\mathscr{G}$  is $F\rightarrow  M$.  In this section, we  define  the topological index for  $(M/F,  \sigma)$ and show
that it agrees with the twisted analytical index morphism for $(M/F,  \sigma)$. 

\subsection{Twisted topological index for foliation} \

Let $i :M\hookrightarrow \mathbb{R}^{2m}$ be an embedding and $T$ be the total space of the normal vector bundle $\pi_T: T\to M$  to the foliation in $\mathbb{R}^{2m}$ with $T_x =(i_*(F_x))^{\bot}$ for $x\in M$.  Consider the foliation  $M\times \mathbb{R}^{2m}$ given by  the bundle  $\tilde{F} = F\times \RR^{2m} \to M\times \RR^{2m}$. 
This foliation has $\tilde{\mathscr{G}}=\mathscr{G} \times \mathbb{R}^{2m} \rightrightarrows M\times \RR^{2m}$ as its holonomy groupoid
equipped  with the pull-back twisiting, still denoted by $\sigma$. 
The map  $(x,\xi) \mapsto (x, i(x)+\xi)$   identifies  an open neighborhood of the 0-section of $T$ with an open transversal of  $(M\times \mathbb{R}^{2m},\tilde{F})$, that we still denote by $T$ with the projection
$\pi_T: T\to M$. 
Denote by $N$  the total space of the normal vector bundle to the inclusion $T\subset M\times \mathbb{R}^{2m}$. We  
can identify $N$ with a small open neighborhood  $U$ of $T$ in $M\times \mathbb{R}^{2m}$.

We use the following diagram to streamline the above notations
\[
\xymatrix{
N\oplus N = N\times_T N  \ar[r]^-{\pi_1} \ar[dr]_{\pi_N} & N \ar[rd]^{p_N} \ar[r]^{\cong} &U \ar@{^{(}->}[dr]&  \tilde{F} \cong F\times \RR^{2m}\ar[d]  
& \tilde{\gr} \ar@<.5ex>[dl]\ar@<-.5ex>[dl]  \ar[r]^{\a\circ p}& PU(H) \ar@<.5ex>[dl]\ar@<-.5ex>[dl]  \\
&F \ar[rd]_{\pi_F}  & T \ar@{^{(}->}[r] \ar[d]^{\pi_T}& M\times \RR^{2m} \ar[ld]_{p} \ar[r]&\{e\} \\
& & M \ar@{-->}[r]^{\a_0}& PU(H)&
}, \]
where $\pi_1$ denotes the projection to the first component, and $\a_0$ denotes the restriction of the twisting $\sigma$ on $\gr$ to the unit space $M$.  As $T$ is an open transversal to the foliation $\tilde{F}$ on $M\times \RR^{2m}$, its normal bundle
$N$ is the pull-back  of $\tilde{F} = \pi_T^*F$ to $T$. We can see that $N\cong \pi_T^* F$ as   vector bundles over $T$.
Under the identification $N$ and $U$, $N\times_T N  = N\oplus   N$ is the total space of $\tilde{F}|_U$. Hence, there is a canonically defined projection $\pi_N$ making the above diagram
commutative.  The same arguments imply that  the total space $N\oplus N^*$ (as a vector bundle over $T$) is diffeomorphic to the total space $F^*\times \RR^{2m}$ (as a vector bundle over $M$).

\begin{lemma} \begin{enumerate}
  
\item There exists an open  neighborhood
$U$ of $T$ in $M\times \mathbb{R}^{2m}$ such  that
the honolomy groupoid of  $\tilde{F}|_U$ over $U$  is strictly isomorphic to  the  groupoid 
$  N\times_T N \rightrightarrows N$ associated to the submersion  $N\to T$.   The latter groupoid
$  N\times_T N \rightrightarrows N$ is Morita equivalent to the groupoid $T\rightrightarrows T$.


\item  Let    $\mu: (N\times_TN\rightrightarrows N)  \rightarrow (T\rightrightarrows T)$ be the Morita equivalence isomorphism of Lie groupoids, $j:N\times_T N \cong \tilde{\gr}|_U \hookrightarrow \gr\times \mathbb{R}^{2m}$ be  the inclusion as an open subgroupoid, and 
$p:\tilde \gr\times \mathbb{R}^{2m}\longrightarrow \gr$ be  the projection. Then
the twisting  $\sigma_0\circ \pi_T\circ \mu$ agrees  with  the twisting $ \sigma\circ p\circ j $ over the  groupoid
$N\times_T N \rightrightarrows N$.
 \end{enumerate}
\end{lemma}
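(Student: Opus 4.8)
The plan is to prove part (1) by standard transversal geometry and then derive part (2) from an explicit description of both twistings as $PU(H)$-bundles-with-action over the unit space $N$.

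For part (1), I would exploit that $T$ is a transversal to $\tilde{F}$, so that its normal bundle is canonically the leaf-direction bundle, $N\cong \pi_T^*F=\tilde{F}|_T$, as already recorded above. Choosing a foliated tubular neighbourhood $U\cong N$ of $T$, and shrinking it so that each leaf of $\tilde{F}$ meets $U$ in a single (hence contractible) plaque, the leaves of $\tilde{F}|_U$ become exactly the fibres of $\pi\colon N\to T$. Since these are the fibres of a vector bundle they are connected and simply connected, and the holonomy groupoid of the fibre foliation of a submersion with simply connected fibres is its fibre-product groupoid; thus the holonomy groupoid of $\tilde{F}|_U$ is strictly isomorphic to $N\times_T N\rightrightarrows N$, and for $U$ small this coincides with the restriction $\tilde{\mathscr{G}}|_U$. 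The Morita equivalence $N\times_T N\sim T$ is the standard one attached to the surjective submersion $\pi\colon N\to T$ (the linking bibundle being $N$ itself), exactly as in the fibre-product example of Section 1.

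For part (2), I would compare the two composite generalized morphisms $N\times_T N\to PU(H)$ by exhibiting an isomorphism of their associated $PU(H)$-principal bundles over the units $N$, compatible with the groupoid actions. Writing a point of $N$ over $\tau\in T$ in the coordinates $(x_n,v)$ of $U\subset M\times\mathbb{R}^{2m}$, and recording that $p|_T=\pi_T$, one computes that $\sigma_0\circ\pi_T\circ\mu$ is the bundle $n\mapsto P_\sigma|_{\pi_T(\pi(n))}=P_\sigma|_x$ with $N\times_T N$ acting by the identity (since $\mu$ sends every arrow to a unit of $T$), whereas $\sigma\circ p\circ j$ is the bundle $n\mapsto P_\sigma|_{x_n}$ with $N\times_T N$ acting through the holonomy action of the small element $\gamma\in\mathscr{G}$ corresponding under part (1) to an arrow $(n,n')$. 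The key step is then to build the intertwining isomorphism $\Phi$: over the fibre $N_\tau$ the zero point corresponds to $\tau$ itself, where $x_n=x$, and for any other $n\in N_\tau$ I would set $\Phi|_n\colon P_\sigma|_x\to P_\sigma|_{x_n}$ to be the holonomy isomorphism of the twisting along the plaque from $x$ to $x_n$. Simple-connectivity of the plaque makes $\Phi|_n$ well defined (independent of the path), and the coherence of the $\mathscr{G}$-action on the simply connected plaque gives, for an arrow $(n,n')$, that the holonomy from $x_{n'}$ to $x_n$ post-composed with $\Phi|_{n'}$ equals $\Phi|_n$; this is precisely the assertion that $\Phi$ intertwines the identity action with the holonomy action. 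Hence the two $PU(H)$-bundles-with-action coincide and the twistings agree.

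I expect part (1) to be essentially routine foliation theory; the main obstacle is the bookkeeping in part (2), namely showing rigorously that evaluating $\sigma$ on the non-constant family of small holonomy elements over a fixed fibre reproduces the constant pullback $\sigma_0$ at the transversal base point. This rests entirely on the simple-connectivity of the plaques together with the coherence of the holonomy action built into the notion of a twisting on the leaf space (the compatible $\mathscr{G}$-action), and it is exactly here that one uses a genuine twisting on $M/F$ rather than a merely leafwise-constant twisting on $M$.
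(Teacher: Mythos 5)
Your argument is correct and is essentially the paper's proof written out in full: the paper disposes of the lemma in one line ("follows from the above commutative diagram and the fact that $T$ is an open transversal to $\tilde{F}$"), and what you supply — the foliated tubular neighbourhood with simply connected plaques identifying $\tilde{\gr}|_U$ with $N\times_T N$ for (1), and the holonomy-built intertwiner of $PU(H)$-bundles-with-action for (2) — is exactly the standard content behind that assertion. Your closing observation that the argument uses the compatible $\gr$-action of a genuine twisting on $M/F$, not merely a leafwise-constant twisting on $M$, matches the paper's own caveat following Definition 1.9.
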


\begin{proof} The proof follows from the above commutative diagram and the fact   that  $T$ is  an open transversal to the foliation $\tilde{F}$ on $M\times \RR^{2m}$.
\end{proof}

This lemma enables us to define the twisted  topological index of $(M,F,\sigma)$ which agrees with
the Connes-Skadalis' longitudinal topological index of $( M ,F)$ when the twisting is trivial. 

\begin{definition}\label{twistedtopindex}  By the {\bf  twisted topological index of $(M,F,\sigma)$} we mean the morphism
\[
\ind_{t,(M/F,\sigma)}:K^i(F^*, \sigma_0 \circ \pi_F)\longrightarrow K^i(M/F,\sigma):= K^i (\gr,\sigma)
\]
given by the composition of various isomorphisms in  twisted K-theory of topological spaces, functoriality of open embedding and Bott isomorphism in  twisted K-theory of groupoids
\begin{equation}\small{
\xymatrix{
  K^i(F^*, \sigma_0 \circ \pi_F) \ar[r]^-{Bott}_-{\cong }& 
  K^i(F^*\times \RR^{2m}, (\sigma\circ p)_0 \circ \pi_F) \ar[r]^-{\cong}    & K^i( N\oplus N^*, \sigma_0\circ \pi_F\circ \pi_N)) \ar[d]^-{Thom^{-1}}_-{\cong} &  \\ 
     K^{i}(N\times_T N,\sigma\circ p\circ j) 
\ar[d]^\cong &K^{i}(N\times_T N,\sigma_0 \circ \pi_T\circ \mu ) \ar[l]^{\cong} &K^i(T,\a_0\circ \pi_T)\ar[l]^-{Morita}_-{\cong}&  \\ K^{i}(\tilde{\gr}|_U,\sigma\circ p\circ j)
\ar[r]^-{j_!} &
 K^{i}(\gr\times \Rr^{2m},\sigma\circ p)
\ar[r]^-{Bott^{-1}}_-\cong&K^{i}(\gr ,\sigma )  .
}}
\end{equation}
 
\end{definition}

 \subsection{The twisted longitudinal index theorem} \
 
 For a regular foliation $(M,F)$ with  a twisting $\sigma$ the space of leaves, that is, a generalized morphism
 from the honolomy groupoid $M/F$ to $PU(H)$, we have  the  twisted analytic index morphism 
 (Definition \ref{twistedindexmorphism})
\[
\ind_{a,(M/F,\sigma)}: K^i(F^*, \sigma_0 \circ \pi_F) \longrightarrow K^i(M/F,\sigma).
\]
The main theorem of the paper is to establish the equality between the twisted analytic index  
and the twisted topological index for $(M/F, \a)$ 
which generalizes the  longitudinal index theorem of Connes and Skadalis (\cite{CS}) for the trivial twisting. 

\begin{theorem}\label{twistlongind}
For a regular foliation $(M,F)$ with a twisting $\xymatrix{\sigma:M/F \ar@{-->}[r] &PU(H)}$ we have the following equality of morphisms:
\begin{equation}
\ind_{a,(M/F,\sigma)}=\ind_{t,(M/F,\sigma)}: K^i(F^*, \sigma_0 \circ \pi_F) \longrightarrow K^i(M/F,\sigma).
\end{equation}
\end{theorem}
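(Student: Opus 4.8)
The plan is to run the classical Atiyah-Singer/Connes-Skandalis axiomatic argument, with the three compatibility results of the previous section playing the role of the three axioms. Concretely, I would realize the composite defining $\ind_{t,(M/F,\sigma)}$ in Definition \ref{twistedtopindex} as the outer edge of one large diagram whose inner cells commute by Propositions \ref{Botttwist}, \ref{opentwist} and \ref{Thomtwist}, and then read off that this outer edge is nothing but $\ind_{a,(M/F,\sigma)}$. Recalling that $A\gr=F$, I would first use the Bott step: applying Proposition \ref{Botttwist} $m$ times to the $\Rr^2$ statement, the square with vertical Bott isomorphisms and horizontal maps $\ind_{a,(\gr,\sigma)}$ and $\ind_{a,(\gr\times\Rr^{2m},\sigma\circ p)}$ commutes, so that $\ind_{a,(\gr,\sigma)}=Bott^{-1}\circ\ind_{a,(\gr\times\Rr^{2m},\sigma\circ p)}\circ Bott$. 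This accounts exactly for the first and last steps ($Bott$ and $Bott^{-1}$) in Definition \ref{twistedtopindex}.

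Next I would bring in the open subgroupoid $\tilde\gr|_U\cong N\times_TN$ through the inclusion $j:\tilde\gr|_U\hookrightarrow\gr\times\Rr^{2m}$. Proposition \ref{opentwist} gives $j_!\circ\ind_{a,(\tilde\gr|_U,\sigma\circ p\circ j)}=\ind_{a,(\gr\times\Rr^{2m},\sigma\circ p)}\circ j_!$, where on the source the map $j_!$ is the open embedding of symbol spaces $A^*(\tilde\gr|_U)=N\oplus N^*\hookrightarrow A^*(\gr\times\Rr^{2m})=F^*\times\Rr^{2m}$; this is precisely the step labelled $j_!$ in the definition. Finally, invoking the Lemma preceding Definition \ref{twistedtopindex}, which identifies the twistings $\sigma\circ p\circ j$, $\sigma_0\circ\pi_T\circ\mu$ and $\a_0\circ\mathscr{M}$ over $N\times_TN$, together with Proposition \ref{Thomtwist}, I would replace $\ind_{a,(N\times_TN,\a_0\circ\mathscr{M})}$ by $Morita^{-1}\circ Thom^{-1}$ (modulo the Fourier isomorphism of Proposition \ref{twistedFourier}), which supplies exactly the $Thom^{-1}$ and $Morita$ steps. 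Stitching these three commuting cells along the diffeomorphism of total spaces $F^*\times\Rr^{2m}\cong N\oplus N^*$ appearing in the middle of the definition, every intermediate identification is matched and the outer composite is seen to equal $\ind_{a,(\gr,\sigma)}$.

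The hard part will not be the logical skeleton, which is standard once the three properties are available, but the bookkeeping of the twistings and of the Fourier and Morita normalisations as one threads them through the diagram. The delicate point is to check that the diffeomorphism $F^*\times\Rr^{2m}\cong N\oplus N^*$ used to pass from the symbol space of $\gr\times\Rr^{2m}$ to that of the tubular neighbourhood is compatible with the symbol-level open inclusion $j_!$, so that their composite acts as the identity on the Bott image; this is the twisted analogue of the excision normalisation in \cite{AS} and relies on the support condition produced by the Thom isomorphism of Proposition \ref{DLN61}. I would also have to verify that the Fourier isomorphism of Proposition \ref{twistedFourier}, which is built into the very definition of $\ind_{a,(\gr,\sigma)}$, passes consistently through the open-inclusion and Thom cells, and that, as in the proofs of Propositions \ref{Botttwist} and \ref{opentwist}, all the morphisms respect the $\ZZ$-grading under the $S^1$-action, so that only the degree-one components, namely the genuine twisted indices, are being compared.
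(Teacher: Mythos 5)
Your proposal is correct and follows essentially the same route as the paper: the proof there likewise assembles the three commuting squares from Propositions \ref{Botttwist}, \ref{opentwist} and \ref{Thomtwist} (together with the lemma identifying the twistings over $N\times_TN$) and observes that the outer composite of Definition \ref{twistedtopindex} collapses to $\ind_{a,(M/F,\sigma)}$. The paper also draws the same conclusion you hint at, namely that any index morphism satisfying the three properties must equal the twisted topological index.
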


\begin{proof}
We use the same notation and terminology of the definition of the twisted topological index for $M/F, \a)$.
Notice that  the analytic twisted index morphism  is compatible with the Bott isomorphism (Proposition \ref{Botttwist}) 
\[
\xymatrix{ K^i(F^*, \sigma_0 \circ \pi_F) \ar[r]^-{Bott}_-{\cong }  \ar[d]_{\ind_{a,(\gr,\sigma)}} & 
  K^i(F^*\times \RR^{2m},(\sigma\circ p)_0\circ \pi_F) \ar[d]^{\ind_{a,(\tilde\gr,\sigma\circ p)}}   \\
  K^i(M/F,\sigma)\ar[r]_-{Bott}^-\cong  &    K^i(   \gr \times \RR^{2m} ,\sigma\circ p) 
}\]
 and  open  inclusions of subgroupoids (Proposition  \ref{opentwist})
 \[
 \xymatrix{ 
 K^i( A^*\tilde{\gr}|_U,  (\a\circ p\circ j)_0 \circ \pi)   \ar[r]^-\cong \ar[d]_{\ind_{a,(\tilde\gr|_U ,\sigma\circ p)}}  & K^i(F^*\times \RR^{2m}, (\sigma\circ p)_0\circ \pi_F) \ar[d]^{\ind_{a,(\tilde\gr,\sigma\circ p)}}   \\
K^i(   \tilde\gr |_U,\sigma\circ p\circ j) \ar[r]_-{j_!}   & K^i(   \gr \times \RR^{2m} ,\sigma\circ p) .
}\]
 We only need to check that the
analytic index morphism for $(U, \tilde{F}|_U)$
\[
\ind_{a, (\tilde \gr|_U, \a\circ p\circ j)}:   K^i( A^*\tilde \gr|_U,  (\a\circ p\circ j)_0 \circ \pi)  
\longrightarrow  K^{i}(\tilde \gr|_U,\sigma\circ p\circ j)
\]
agrees with the  composition of the following part in the definition of  the twisted topological index for $(M/F, \a)$ 
\[
\xymatrix{
   K^i( N\oplus N^*, \sigma_0\circ \pi_F\circ \pi_N)) \ar[r]^-{Thom^{-1}}_-{\cong} & K^i(T,\a_0\circ \pi_T)\ar[r]^-{Morita}_-{\cong}  & K^{i}(\tilde{\gr}|_U,\sigma\circ p\circ j).
}
\]
Note that $A^*\gr|_U \cong N\oplus N^*$ as vector bundles over $U\cong N$. That is, the twisted analytic  index morphism (modulo the Morita equivalence isomorphism)
is the Thom inverse for  twisted K-theory  of Lie groupoids  as in Proposition \ref{Thomtwist}.  
Hence, we finish the proof of the  twisted longitudinal index theorem. 
\end{proof}

What we just proved in the last theorem is that any index morphism for foliations with twistings satisfying the three properties  in Propositions \ref{Botttwist}  \ref{opentwist} \ref{Thomtwist}  is equal to the twisted topological index. 

\smallskip
Indeed, if ${\it IND}_{(\hr,\beta)}$ is any such index morphism for twisted groupoids, the three mentioned properties imply that the following three square diagrams are commutative precisely by the same arguments as the proof above
{\tiny\[ \xymatrix{ K^*(F^*,\sigma_0\circ\pi_F) 
\ar[r]^-{Bott}_-{\approx} \ar[dd]|-{{\it IND}_{(M/F,\sigma)}} &K^*(F^*\times \RR^{2m},(\sigma\circ p)_0\circ \pi_F) \ar[dd]|-{{\it IND}_{(M/F \times \RR^{2m},\sigma\circ p)}}& K^*(A^*\tilde \gr|_U,  A^*\tilde{\gr}|_U,  (\a\circ p\circ j)_0 \circ \pi)\ar[dd]|-{{\it IND}_{(\tilde \gr|_U,\sigma\circ p\circ j)}} \ar[l]_-{\approx}_-{}&  K^*(N\oplus N^*,  \sigma_0\circ \pi_F\circ \pi_N)\ar[dd]|-{Thom^{-1}}\ar[l]_-{\approx}\\ && &\\ K^*(M/F,\sigma)\ar[r]_-{Bott}^-{\approx}&K^*(M/F\times \Rr^{2m},\sigma\circ p)&K^*(\tilde \gr|_U,\sigma\circ p\circ j)\ar[l]^-{j!}&K^*(T,\a_0\circ \pi_T)\ar[l]_-{\approx}^-{Morita} } ,
	\]
}
Hence, the big diagram is also commutative, and we obtain the equality ${\it IND}_{(M/F,\sigma)}=\ind_{t,(M/F,\sigma)}$.

\begin{remark} \begin{enumerate}
\item There is a second definition of
the twisted analytic index using projective pseudo-differential operators along the leaves which also satisfies
the three properties  in Propositions \ref{Botttwist}  \ref{opentwist} \ref{Thomtwist}.  We will return to this issue in a separate paper. 
 
\item The proof of the longitudinal index theorem we propose is not exactly that of Connes-Skandalis (\cite{CS})  but more in the spirit of the index theorem proved in \cite{Ca4}, theorem 6.4, which is based on the fact that the Thom (inverse) isomorphism can be realized as the index of some deformation groupoid, \cite{DLN}.

\end{enumerate}

\end{remark}

\section{ Wrong way functoriality}

 In this section we construct a push-forward map  in twisted K-theory for any smooth oriented map from 
 a manifold to $M/F$,   the space of leaves of a foliation $(M, F)$.  This construction generalizes 
Connes-Skandalis' push-forward map  in  K-theory for any K-oriented map from a manifold to $M/F$. The main result will be the functoriality of these wrong way morphisms.

Throughout  this section, we assume that   with respect to an open cover  $\{U_\alpha\}$  of $M$ and the  foliation  charts 
\[
 k_\alpha:  U_\alpha \cong 
  \RR^p \times \RR^q \to  \RR^{q} , 
  \]
    the twisting
$\sigma$ on $M/F$ is given by a $PU(H)$-valued 1-cocycle  
\[
\sigma_{\alpha\beta}:  \gr_{U_\alpha}^{U_\beta} \longrightarrow PU(H).
\]


Recall that  in  \cite{CS} a smooth map $f: W\to M/F$ is given by a principal right $\gr_M$-bundle over $W$
\[
\xymatrix{
 &  \gr_f \ar[dl]^{r_f}   \ar[rd]_{s_f}&\gr_M \ar@<.5ex>[d]_{r\ } \ar@<-.5ex>[d]^{\ s}  \\
W & &  M , 
}
\]
and $f: W\to M/F$
is called a submersion if the map $s_f$ is a submersion.  Equivalently,  a smooth map $f: W\to M/F$ is given by
a $\gr_M$-valued 1-cocycle  $(\Omega_i, f_{ij})$
\begin{equation}\label{fcocycle}
f_{ij}: \Omega_i\cap \Omega_j \longrightarrow \gr_M, 
\end{equation}
with respect to an open covering $\{\Omega_i\}$ of $W$ such that 
\begin{equation}\nonumber
f_{ij} (x) \circ f_{jk}(x) = f_{ik} (x)
\end{equation}
for any $x\in \Omega_i \cap \Omega_j \cap \Omega_k$.  Note that   $f_{ii}:  \Omega_i \longrightarrow M$  due to the fact that  $ f_{ii} (x) \circ f_{ii}(x) = f_{ii} (x)$ for any $x\in \Omega_i$.    We always assume that for each $\Omega_i$
\[
\overline{f_{ii} (\Omega_i)} \subset U_{\alpha(i)}
\]
for a chosen foliation chart $U_{\alpha (i)}$.  In particular, the pull-back twisting of $\sigma$ by $f$ on $W$, denoted by
$f^*\sigma$,  is
the composition 
\[
f^*\sigma =  \sigma_{\alpha (i)\alpha (j)} \circ f_{ij}:  \Omega_i\cap \Omega_j \longrightarrow  
 \gr_{U_{\alpha(i)}}^{U_{\alpha (j)}} \longrightarrow PU(H).
 \]

Let $\nu_F$ be the transverse bundle to the foliation $F$, that is, $(\nu_F)_x = T_xM/F_x$.
  One can check that the local
 vector bundles $f^*_{ii}\nu_F$ can be glued together using    $f_{ij}$  to form a real vector bundle 
 over $W$. We denote this vector bundle by $f^*\nu_F$.  A map $f: W\to M/F$ is called
 oriented if $TW\oplus f^*\nu_F$ is an oriented vector bundle over $W$.   The orientation twisting of 
 $TW\oplus f^*\nu_F$
 \[\xymatrix{
 o_{TW\oplus f^*\nu_F}:  W  \ar@{-->}[r] & PU(H)}
 \]
 is equivalent to a trivial twisting if and only if $TW\oplus f^*\nu_F$ is K-orientable.
 
We are now going to construct the push-forward map, with a possible degree shift given by $d(f) = dim (W) + rank (\nu_F) \mod 2$, 
\begin{equation}\label{push-forward}
f_!:  K^*(W, f^*\a + o_{TW \oplus  f^*\nu_F}) \longrightarrow K^{*+ d(f)}(M/F, \a)
\end{equation}
associated to any smooth oriented  map 
$f: W\longrightarrow M/F$ and a twisting $\xymatrix{\sigma:M/F \ar@{-->}[r] & PU(H)}$, where $f^*\a$ is the
 pull-back twisting on $W$, and $o_{TX \oplus f^*\nu_F}$ is the orientation twisting of  $TW\oplus \nu_F$. 
   Let 
 \[\xymatrix{  W\ar[rr]^-{f}   \ar[dr]_-{j} & & M/F \\
  & Z  \ar[ru]_-{g} }
 \]
 be a factorization of $f$ such that $j$ is oriented and proper, and $g$ is a submersion. Such a factorization can be
 found in \cite{Cosf} \cite{BH} using the foliation microbundle associated to a Haefliger structure on $W$.   
 According to \cite{CW},
 there is a push-forward map in twisted K-theory for any proper map $j: W  \to Z $
 \[
 j_!:  
  K^*(W, f^*\a + o_{TW \oplus  f^*\nu_F}) \longrightarrow  K^*( Z,g^*\a + o_{TZ \oplus g^*\nu_F}).
  \]
 Therefore, we only need to establish a push-forward map for any submersion $g:  Z   \to M/F$
  \[
g_!:   K^*(  Z, \hat f^*\a + o_{T Z  \oplus   g ^*\nu_F}) \longrightarrow  K^*(M/F, \a ), 
  \]
 and show that
  $f_! =g_! \circ j_! $ doesn't depend on the choice of the factorization $f = g\circ j$.


Let  $f: W\to M/F$ be a submersion,  that is, with respect to an open cover $\{\Omega _i\}$ of $W$, there
is a $\gr_M$-valued 1-cocylce $\{(\Omega_i, f_{ij}\}$ 
\[
f_{ij}:  \Omega_i\cap \Omega_j \longrightarrow \gr_M
\]
 such that  $f_{ii}: \Omega_i \to M$ are transverse to the foliation $F$. 
Denote by   $F_W=f^*(F)$  the pull-back  foliation  on $W$ 
 given by the kernel of the homomorphism (independent of $i$ and integrable)
\[
\pi \circ df_{ii}:  TW|_{\Omega_i} \longrightarrow TM \longrightarrow TM/F, 
\]
with the induced foliation charts given by
$
k_{\alpha(i)} \circ f_{ii}:  \Omega_i \longrightarrow U_{\alpha(i)} \longrightarrow  \RR^q.
$
Note that the transverse bundle to the foliation $F_W$, denoted by $\nu_{F_W}$, is isomorphic to $f^*\nu_F$, 
the pull-back transverse bundle.

Denote by $\gr_W\rightrightarrows W$ the holonomy groupoid of $(W,F_W)$. In \cite{CS} Lemma 4.2, Connes-Skandalis give a explicit (left) action of $\gr_W$ on $\gr_f$. In terms of Hilsum-Skandalis morphisms (Definition 1.1 {\it (i)} in \cite{HS}) this means precisely that  there is a generalized morphism 
$\tilde{f}:\gr_W\longrightarrow \gr_M$ given by  the graph $\gr_f$ seen as a 
$\gr_M-$principal bundle over $\gr_W$
\[
\xymatrix{
\gr_W \ar@<.5ex>[d]\ar@<-.5ex>[d]&\gr_f \ar[ld] \ar[rd]&\gr_M \ar@<.5ex>[d]\ar@<-.5ex>[d]\\
W&&M,
}
\]
or equivalently that we can factorize the submersion $f: W \to M/F$ as follows
\begin{equation}\label{fsubmersion}
\xymatrix{
W\ar[rd]_{p_W}\ar[rr]^f&&M/F\\
&W/F_W\ar[ru]_{\tilde{f}}&
}
\end{equation}
where $p_W:W\longrightarrow W/F_W$ is the natural submersion given by the inclusion of $W$ into
 the unit space $W/F_W$.
 Denote by $\tilde{f}^*\a$ be the pull-back twisting
on $W/F_W$,  then $f^*\a$  is equivalent to
the restriction of $\tilde{f}^*\a$ to the unit space of $\gr_W$. Note that twisting $o_{TW \oplus f^*\nu_F}$
is equivalent to the twisting $o_{\nu_{F_W}}$ as
\[
TW \oplus f^*\nu_F \cong F_W \oplus \nu_{F_W} \oplus f^* \nu_F \cong F_W \oplus  f^*(\nu_F\oplus   \nu_F ),
\]
and $\nu_F\oplus  \nu_F$ has a canonical Spin$^c$ structure. Hence, we have the following Thom isomorphism (Cf. \cite{CW})
\[
K^*(W, f^*\a + o_{TW \oplus \nu_{F_W}}) \cong K^* (W, f^*\a + o_{F_W}).
\]

\begin{definition}\label{p_W} 
The push-forward map for the submersion $p_W: W \to W/F_W$ is defined to the composition 
of the Thom isomorphism in twisted K-theory and the twisted analytic index of $(W/F_W, \tilde{f}^*\a)$
\[ \xymatrix{
(p_W)_!:    K^* (W, f^*\a + o_{F_W}) \ar[r]^-{Thom}_-\cong
&  K^*(F_W^*, \pi_{F_W}^* \tilde f^*\a)  \ar[rrr]^{\ind_{a, (W/F_W, \tilde f^*\a)} }& & & K^*(W/F_W, \tilde f^*\a  ) },
\]
with the degree shifted by the rank of $F_W$. Here $\pi_F$ is the projection $F^*_W \to W$.
\end{definition}

 

Now we are going   to construct a  push-forward map  
\begin{equation}
\tilde{f}_! :K^*(W/F_W,\tilde{f}^*\sigma)\longrightarrow K^*(M/F,\sigma).
\end{equation}
associated to the morphism $\tilde{f}:W/F_W\longrightarrow M/F$.

Note  that a generalized morphism   $\tilde{f}:W/F_W\longrightarrow M/F$ can be equivalently described  (Definition-Proposition 1.1 in \cite{HS}) by a strict morphism of groupoids 
\begin{equation}
\tilde{f}_T:(\gr_W)_{T }\longrightarrow (\gr_M)_{T }
\end{equation}
between the \'etale groupoids obtained from the restriction  to some complete faithful transversals. 
 The way of relating the morphism $\tilde{f}$ with $\tilde{f}_T$  is by means of the Morita equivalence between an holonomy groupoid and  the \'etale groupoid obtained by restriction to a transversal. In our case we have a commutative diagram
\begin{equation}\label{transversgrpds}
\xymatrix{
\gr_W\ar[r]^-{\tilde{f}}&\gr_M\\
(\gr_W)_{T }\ar[r]_-{\tilde{f}_T}\ar[u]^{\mathscr{M}_W}_{\sim}& (\gr_M)_{T }\ar[u]_{\mathscr{M}_M}^{{\sim}},
}
\end{equation}
where $\mathscr{M}$ stands for the corresponding Morita equivalence. 
We briefly recall the construction of $\tilde{f}_T:(\gr_W)_{T}\to (\gr_M)_{T}$ ( Details can be found in \cite{HS}, specially in Definition-Proposition 1.1  and paragraph 3.13).

Recall that  the foliation $F_W$ is constructed locally from the foliation chart
\begin{equation}\label{ki}
k_i:  \Omega_i\stackrel{f_{ii}}{\longrightarrow}U_{\alpha(i)}\stackrel{k_{\alpha (i)}}{\longrightarrow}\mathbb{R}^q
\end{equation}
 where $\{k_{\alpha (i)}: U_{\alpha(i)} \to \RR^q \}$ is the foliation chart  defining the foliation $ F$ on $M$. 
 Let $\T_i$ be a transversal of the foliation $F_W$  on $\Omega_i$ defined by choosing a right smooth inverse to $k_i$.  For each  foliation  chart 
  $k_\alpha: U_\alpha \to \RR^q$,  we can choose a smooth connected transversal 
  \[
  k_\alpha^{-1}:  \RR^q \longrightarrow \T_\alpha \subset U_\alpha, 
  \]
  in particular, $k_\alpha \circ k_\alpha^{-1} = Id$.  We can assume that $\{\T_\alpha\}$ are 
  pairwise disjoint and  $f_{ii}$ defines an embedding 
 \begin{equation}\label{fi}
 f_i: \T_i \longrightarrow \T_{\alpha(i)}
 \end{equation}
 where $\T_{\alpha(i)}$ is a transversal of the foliation  $F_M$ on $U_{\alpha(i)}$.  
Let   $\T_W   =  \bigsqcup_i   \T_i 
$ and $\T_M = \bigsqcup_{i} \T_{\alpha(i)}$. We have then the restricted groupoids
\smallskip
\begin{center}
$ (\gr_W)_{T }:=\bigsqcup_{i,j}(\gr_W)_{\T_j}^{\T_i}\rightrightarrows \T_W$ \, and  \, 
$ (\gr_M)_{T }:=\bigsqcup_{\alpha, \beta}(\gr_M)_{\T_{\alpha}}^{\T_{\beta }}\rightrightarrows \T_M$.
\end{center}

Recall that by definition $\tilde{f}$ is given by a cocycle $\{(\Omega_i, \tilde{f}_{ij})\}_i$ on $\gr_W$ with values on 
$\gr_M$.
The strict morphism of groupoids
\[
\xymatrix{
(\gr_W)_{T }\ar@<+2pt>[d]\ar@<-2pt>[d]\ar[r]^{\tilde{f}_T}& (\gr_M)_{T }\ar@<+2pt>[d]\ar@<-2pt>[d]\\
\T_W\ar[r]_-{\tilde{f}_{T}^{(0)}}&\T_M
}
\]
is defined as  the restrictions of the $\tilde{f}_{ij}$ to the $(\gr_W)_{\T_j}^{\T_i}$, {\it i.e.}, 
\[
\xymatrix{
(\gr_W)_{T_W}=\bigsqcup_{i,j}(\gr_W)_{\T_j}^{\T_i}\ar[rr]^-{\bigsqcup_{i,j}\tilde{f}_{ij}} &&(\gr_M)_{T_M}\subset \gr_M
}
\]
Notice that the above map has its image in $(\gr_M)_{T}$ by construction, where we are canonically identifying $(\gr_M)_{T}$ as the sub-groupoid of $\gr_M$ 
  given by
  \[
 \{ \gamma \in \gr_M: s(\gamma) \in \T_{\alpha(i)} , r(\gamma) \in \T_{\alpha(j)}.\}
 \]

Remark that $\T_W$ is a complete transversal and that, by enlarging $\T_M$, we can also assume $T_M$ is complete. Even more, in our case, the embedding $\tilde{f}_{T}^{(0)}:\T_W\hookrightarrow \T_M$ is \'etale (even we can assume it is a proper open inclusion, see (\ref{ki}), (\ref{fi}) or remark 1.4 in \cite{HS} for more details). In particular $\dim\, \T_W=\dim\, \T_M$.
The groupoids $(\gr_W)_{T}\rightrightarrows \T_W$ and $(\gr_M)_{T}\rightrightarrows \T_M$ are \'etale groupoids. Hence, the two arrow manifolds have the same dimension too: 
$\dim\,(\gr_W)_{T}=dim\, \T_W=\dim\, \T_M=\dim\,(\gr_M)_{T}$. 

What we just have argued can be stated in the following known fact:

\begin{lemma}(\cite{Haef62} p.378,\cite{CS} section IV,\cite{HS})
The homomorphism $\tilde{f}_T:(\gr_W)_{T_W}\longrightarrow (\gr_M)_{T_M}$ is an injective \'etale map. 
\end{lemma}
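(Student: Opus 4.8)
The plan is to verify the two assertions separately: that $\tilde{f}_T$ is \'etale (a local diffeomorphism) and that it is globally injective. The discussion preceding the lemma has already established the three facts that make this feasible, namely that $(\gr_W)_{T_W}$ and $(\gr_M)_{T_M}$ are \'etale groupoids, that the induced map on units $\tilde{f}_{T}^{(0)}:\T_W\hookrightarrow \T_M$ is a (proper) open embedding between manifolds of equal dimension, and that $\dim (\gr_W)_{T_W}=\dim (\gr_M)_{T_M}$. Since an injective local diffeomorphism is automatically an open embedding onto an open subset, the two assertions together give the statement.

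First I would prove \'etaleness. Because both groupoids are \'etale, their source maps $s_W$ and $s_M$ are local diffeomorphisms, and $\tilde{f}_T$, being a strict morphism, intertwines them with the map on units:
\[
s_M\circ \tilde{f}_T=\tilde{f}_{T}^{(0)}\circ s_W.
\]
Fix an arrow $\gamma\in (\gr_W)_{T_W}$, set $\eta=\tilde{f}_T(\gamma)$, and choose the local section $\tau$ of $s_M$ through $\eta$, which exists and is a local diffeomorphism since $s_M$ is \'etale. Then both $\tilde{f}_T$ and $\tau\circ\tilde{f}_{T}^{(0)}\circ s_W$ lift the map $\tilde{f}_{T}^{(0)}\circ s_W$ through the local diffeomorphism $s_M$ and agree at $\gamma$, hence coincide in a neighbourhood of $\gamma$ by uniqueness of such lifts. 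Locally, therefore, $\tilde{f}_T=\tau\circ\tilde{f}_{T}^{(0)}\circ s_W$ is a composition of local diffeomorphisms (here the dimension equality guarantees that the open embedding $\tilde{f}_{T}^{(0)}$ is itself a local diffeomorphism), so $\tilde{f}_T$ is \'etale.

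The heart of the argument, and the step I expect to be the main obstacle, is injectivity, which is where the hypothesis that $f$ is a submersion (equivalently, that $F_W=f^*F$ and that the transversal maps $f_i:\T_i\to\T_{\alpha(i)}$ of (\ref{fi}) are open embeddings) is genuinely used. Arrows of the \'etale groupoid $(\gr_W)_{T_W}$ are holonomy germs of $F_W$ between points of $\T_W$, and $\tilde{f}_T$ sends the holonomy germ of an $F_W$-leafwise path $c$ to the holonomy germ of its $f$-image $f\circ c$, which is an $F$-leafwise path. The key observation is that, because $F_W=f^*F$ and each $f_i$ carries $\T_i$ diffeomorphically onto an open subset of $\T_{\alpha(i)}$, the holonomy transport of $F_W$ along $c$ is the conjugate by $f_i$ of the holonomy transport of $F$ along $f\circ c$; passing to $f$-images thus loses no holonomy information. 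Consequently, if $\tilde{f}_T(\gamma_1)=\tilde{f}_T(\gamma_2)$, then $\gamma_1,\gamma_2$ have the same source and target in $\T_W$ by injectivity of $\tilde{f}_{T}^{(0)}$, and the same underlying holonomy germ by the faithfulness just described; hence $\gamma_1=\gamma_2$. Making this comparison of the holonomy pseudogroups of $F_W$ and $F$ precise, rather than the remaining formal bookkeeping, is the delicate point, and it rests squarely on the functoriality of holonomy under the submersion $f$ together with the \'etale property of $\tilde{f}_{T}^{(0)}$ recorded above.
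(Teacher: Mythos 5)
Your proof is correct. Note, however, that the paper does not actually prove this lemma: it records it as a ``known fact'' following from the preceding discussion, citing Haefliger, Connes--Skandalis (section IV) and Hilsum--Skandalis. What the paper's preceding paragraph actually establishes is exactly the input to your first half --- that both restricted groupoids are \'etale, that $\tilde{f}_T^{(0)}:\T_W\hookrightarrow\T_M$ is an open embedding, and that the arrow spaces have equal dimension --- and your factorization $\tilde{f}_T=\tau\circ\tilde{f}_T^{(0)}\circ s_W$ through a local section of $s_M$ is the standard (and correct) way to turn that into \'etaleness of the arrow map. For injectivity, which the paper delegates entirely to the references, you have correctly isolated the essential point: since $F_W=f^*F$ and each $f_i:\T_i\to\T_{\alpha(i)}$ is an open embedding, the holonomy transport of $F_W$ along a leafwise path $c$ is the conjugate by the $f_i$ of the holonomy transport of $F$ along $f\circ c$, and because $f_i(\T_i)$ is open the germs correspond bijectively; combined with injectivity of $\tilde{f}_T^{(0)}$ this gives injectivity on arrows. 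This is precisely the identification of the holonomy pseudogroup of a pullback foliation with (the restriction of) that of the original foliation, which is the content of the cited sources. So your argument is not a different route; it supplies the details the paper leaves to the literature, and it does so correctly.
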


Now, given  a twisting $\sigma:\gr_M--->PU(H)$,  by taking the refinement of the foliation charts $\{U_\alpha\}$ if necessary, we can construct as above a strict morphism of groupoids
\[
\sigma_T :   (\gr_M)_{T } \longrightarrow PU(H),\]
 which defines a twisting   on $(\gr_M)_{T }$.
 The $C^*$-algebras of two Morita equivalent extensions are also Morita equivalent, \cite{Ren87,TXL}. Hence, we have
\[
K^*(M/F,\sigma)\cong K^*((\gr_M)_T,\sigma_T).
\]


 The twisting $\a_T$ and $ \tilde{f}_T^* \a_T = \a_T \circ \tilde{f}_T$ induce the following injective immersions of  groupoid extensions
\begin{equation}
\xymatrix{
R_W\ar[r]\ar[d]&R_M\ar[d]   \\
(\gr_W)_T\ar[r]_{\tilde{f}_T}&(\gr_M)_T  . \\
}
\end{equation}
Taking  the degree one index morphism associated to the immersion $R_W\hookrightarrow R_M$ as in (\ref{indexR}), we  obtain a homomorphism 
\begin{equation}
(\tilde{f}_T)_! :K^*((\gr_W)_{T },\tilde{f}^*\sigma_T)\longrightarrow K^*((\gr_M)_{T },\sigma_T)
\end{equation}

We have the following lemma.

\begin{lemma}\label{tilde{f}}
With the notations above,  the push-forward map  
\begin{equation}
\tilde{f}_! :K^*(W/F_W,\tilde{f}^*\sigma)\longrightarrow K^*(M/F,\sigma)
\end{equation}
associated to $\tilde{f}: W/F_W \to M/F$ 
given by   the composition
$$K^*(W/F_W,\tilde{f}^*\sigma)\stackrel{\mathscr{M}_W}{\longrightarrow}K^*((\gr_W)_{T },\tilde{f}^*\sigma_T)\stackrel{(\tilde{f}_T)_!}{\longrightarrow}K^*((\gr_M)_{T },\sigma_T)\stackrel{\mathscr{M}_M }{\longrightarrow}
K^*(M/F,\sigma),$$ 
where  $\mathscr{M}_W$ and $\mathscr{M}_M$ denote   
the induced isomorphisms  from the Morita equivalences (\ref{transversgrpds}), does not depend on the choices of complete transversals.
\end{lemma}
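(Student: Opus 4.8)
The plan is to compare any two admissible systems of complete transversals by reducing to the case of an inclusion and then proving that the immersion index morphism is natural under the Morita equivalences such inclusions induce. Let $(\T_W,\T_M)$ and $(\T_W',\T_M')$ be two systems of complete transversals adapted to $\tilde f$. Since a disjoint union of complete transversals is again complete, and the restriction of a holonomy groupoid to a sub-transversal that remains complete is a Morita equivalence, it suffices to compare each system with the one built from the unions $\T_W\sqcup\T_W'$ and $\T_M\sqcup\T_M'$. Thus I may assume $\T_W\subset\T_W'$ and $\T_M\subset\T_M'$, with the inclusions compatible with the \'etale maps $\tilde f_T$ and $\tilde f_{T'}$.

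Because $\tilde f_T$ and $\tilde f_{T'}$ are restrictions of the single generalized morphism $\tilde f$, the diagram
\[\xymatrix{(\gr_W)_{T}\ar[r]^{\tilde f_T}\ar[d]&(\gr_M)_{T}\ar[d]\\(\gr_W)_{T'}\ar[r]_{\tilde f_{T'}}&(\gr_M)_{T'}}\]
commutes, the vertical inclusions being Morita equivalences. Pulling back the central extension $S^1\to U(H)\to PU(H)$ along $\sigma_T$, $\sigma_{T'}$ and their $\tilde f$-pullbacks lifts this to a commuting square of $S^1$-central extensions $R_W\hookrightarrow R_M$ and $R_W'\hookrightarrow R_M'$ with $S^1$-equivariant vertical inclusions.

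The crux is the naturality of the immersion index. The morphism $(\tilde f_T)_!$ of (\ref{indexR}) is built from the deformation groupoid $R_W^N\times\{0\}\sqcup R_M\times(0,1]$ of the immersion $R_W\hookrightarrow R_M$ and its evaluations $ev_0,ev_1$. Applying the functoriality (\ref{fundnc}) of the deformation to the normal cone to the inclusion of pairs $(R_M,R_W)\hookrightarrow(R_M',R_W')$, I obtain a morphism between the two deformation groupoids which equals the vertical inclusions for $t\ne0$ and their normal derivatives for $t=0$; since these inclusions are Morita equivalences, so is the resulting map, and by construction it intertwines both $ev_0$ and $ev_1$. By Remark \ref{Moritaextensions} the induced strong Morita equivalence of the deformation $C^*$-algebras respects the $\ZZ$-grading under the $S^1$-action, so restricting to the degree-one summand and passing to K-theory yields a commuting square
\[\xymatrix{K^*((\gr_W)_{T},\tilde f^*\sigma_T)\ar[r]^-{(\tilde f_T)_!}\ar[d]_-{\cong}&K^*((\gr_M)_{T},\sigma_T)\ar[d]^-{\cong}\\K^*((\gr_W)_{T'},\tilde f^*\sigma_{T'})\ar[r]_-{(\tilde f_{T'})_!}&K^*((\gr_M)_{T'},\sigma_{T'}),}\]
whose vertical maps are the change-of-transversal Morita isomorphisms.

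Finally, since Morita equivalences compose canonically, the isomorphism $\mathscr{M}_M$ attached to $\T_M$ equals $\mathscr{M}_{M'}$ precomposed with the right-hand vertical isomorphism above, and likewise $\mathscr{M}_W$ factors through the left-hand one. Inserting this commuting square into the definition of $\tilde f_!$ shows that $\mathscr{M}_{M}\circ(\tilde f_T)_!\circ\mathscr{M}_W^{-1}=\mathscr{M}_{M'}\circ(\tilde f_{T'})_!\circ\mathscr{M}_{W'}^{-1}$, which is the asserted independence. I expect the third step to be the main obstacle: one must check carefully that the Hilsum--Skandalis immersion index is genuinely functorial under the Morita equivalences coming from a change of transversals, that is, that the two deformation groupoids are Morita equivalent through a bibundle simultaneously compatible with $ev_0$, $ev_1$ and the $S^1$-grading. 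Once this naturality is in place, the remaining steps are formal.
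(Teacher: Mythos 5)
Your argument is correct and, unlike the paper's, is self-contained: the paper's entire proof of this lemma is a two-line deferral to Lemma~3.14 and Remark~3.12 of Hilsum--Skandalis, whereas you reconstruct the underlying mechanism. Your route --- reduce to nested systems of transversals via the common refinement $\T_W\sqcup\T_W'$, $\T_M\sqcup\T_M'$, then prove naturality of the immersion index under the resulting inclusions --- is the standard one and is presumably what the cited proof does. The one place where you claim more than you need is the third step: you do not have to show that the induced map of deformation groupoids is itself a Morita equivalence (which, as you yourself flag, would require an extra check). Since the deformation index is $(ev_1)_*\circ(ev_0)_*^{-1}$, it suffices that the two evaluation squares commute (immediate from functoriality of the inclusions), that $(ev_0)_*$ is invertible in each row (contractibility of $C^*(R\times(0,1])$), and that the outer vertical maps --- induced by open inclusions of restrictions to complete transversals, hence by full hereditary subalgebras --- coincide in K-theory with the Morita isomorphisms; compatibility with the $\ZZ$-grading under the $S^1$-action then follows from the equivariance of all the inclusions, as you say. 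Note also that since $\tilde f_T$ is an injective \'etale map the normal bundles in question have rank zero, so $R_W^N=R_W$ and the deformation-to-the-normal-cone step of your argument degenerates harmlessly. With these simplifications your proof is complete and considerably more explicit than the one given in the paper.
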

\begin{proof}
It is a direct adaptation of the proof of lemma 3.14 in \cite{HS}. Just observe that we can also apply remark 3.12 in \cite{HS}  in our context since the morphism $\tilde{f}:W/F_W\longrightarrow M/F$ is \'etale. 
\end{proof}

The following two propositions will allow us to use the factorization (\ref{fsubmersion}) and the precedent discussion to define the twisted pushforward map for any submersion.

\begin{proposition}\label{CS45}
Let $f:W\longrightarrow M/F$ be a smooth submersion and $\xymatrix{\sigma:M/F \ar@{-->}[r] &PU(H)}$ be a twisting. Let $X\stackrel{g}{\longrightarrow} W/F_W$ be another submersion. 
Let $\tilde{f} : W/F_W \to M/F$ and $ \tilde{g}: X/F_X \to W/F_W$ be the associated generalized morphisms.  Then 
$\tilde{f}_! \circ\tilde{g}_! =(\tilde{f} \circ\tilde{g})_!$.
\end{proposition}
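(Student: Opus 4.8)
The plan is to reduce the statement, by passing to complete transversals, to the functoriality of the transversal index morphism under composition of strict \'etale morphisms, where it becomes essentially tautological. First I would choose complete faithful transversals $\T_X$, $\T_W$, $\T_M$ for $X/F_X$, $W/F_W$ and $M/F$, adapted to the two submersions, so that the generalized morphisms $\tilde{g}$ and $\tilde{f}$ are represented by strict morphisms of \'etale groupoids $\tilde{g}_T:(\gr_X)_T\to(\gr_W)_T$ and $\tilde{f}_T:(\gr_W)_T\to(\gr_M)_T$ sharing the \emph{same} middle transversal $\T_W$. With such compatible choices the strict morphism representing the composite generalized morphism $\tilde{f}\circ\tilde{g}$ is precisely $\tilde{f}_T\circ\tilde{g}_T$; by Lemma \ref{tilde{f}} the three pushforwards are independent of the transversal choices, so I may compute $(\tilde{f}\circ\tilde{g})_!$ with this particular representative.

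Next I would unwind the three pushforwards from the formula in Lemma \ref{tilde{f}}, each being a composite of a Morita isomorphism, a transversal index morphism, and a second Morita isomorphism. In $\tilde{g}_!$ the final Morita map is $\mathscr{M}_W:K^*((\gr_W)_T,\tilde{f}^*\sigma_T)\to K^*(W/F_W,\tilde{f}^*\sigma)$, whereas in $\tilde{f}_!$ the initial Morita map is the inverse isomorphism $\mathscr{M}_W:K^*(W/F_W,\tilde{f}^*\sigma)\to K^*((\gr_W)_T,\tilde{f}^*\sigma_T)$. Hence in the composite $\tilde{f}_!\circ\tilde{g}_!$ these two cancel, and the domains and twistings match since $(\tilde{f}\circ\tilde{g})^*\sigma=\tilde{g}^*\tilde{f}^*\sigma$. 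The identity to be proved therefore reduces to
\[
(\tilde{f}_T)_!\circ(\tilde{g}_T)_!=(\tilde{f}_T\circ\tilde{g}_T)_!.
\]

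The crucial observation is then that, by the Lemma of Haefliger--Connes-Skandalis--Hilsum-Skandalis quoted above, $\tilde{g}_T$ and $\tilde{f}_T$ are injective \'etale homomorphisms between \'etale groupoids of equal dimension, hence open embeddings onto open subgroupoids, and so is their composite. Passing to the associated $S^1$-central extensions gives $S^1$-equivariant open inclusions $R_X\hookrightarrow R_W\hookrightarrow R_M$. For an open inclusion the normal bundle of the immersion vanishes, so the deformation groupoid of (\ref{defgroupoidext}) degenerates and the index morphism (\ref{indexR}) reduces to the open-subgroupoid pushforward $j_!$ of Proposition \ref{opentwist} --- concretely, the K-theory map induced by the inclusion of convolution $C^*$-algebras as an ideal. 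Since these inclusion-induced maps are $S^1$-equivariant they respect the $\ZZ$-grading of Remark \ref{decomposition}, so taking degree-one components commutes with composition, and the inclusion-induced map of the composite open inclusion $R_X\hookrightarrow R_M$ is the composite of the two. This yields $(\tilde{f}_T)_!\circ(\tilde{g}_T)_!=(\tilde{f}_T\circ\tilde{g}_T)_!$, and combining with the cancellation of the middle Morita isomorphisms gives $\tilde{f}_!\circ\tilde{g}_!=(\tilde{f}\circ\tilde{g})_!$.

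The step I expect to be the main obstacle is the identification in the last paragraph: verifying carefully that the deformation index morphism (\ref{indexR}) for an open inclusion of extensions genuinely coincides with the open-subgroupoid pushforward $j_!$, together with the bookkeeping ensuring that the compatible transversal choices produce $(\tilde{f}\circ\tilde{g})_T=\tilde{f}_T\circ\tilde{g}_T$ on the nose rather than merely up to equivalence of generalized morphisms --- this is exactly where the \'etale nature of the maps and the independence statement of Lemma \ref{tilde{f}} are essential.
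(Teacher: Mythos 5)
Your proposal is correct and follows essentially the same route as the paper: pass to the \'etale transversal groupoids via Lemma \ref{tilde{f}}, cancel the middle Morita isomorphisms, and reduce everything to the commutative triangle of immersions of central extensions $R_X\hookrightarrow R_W\hookrightarrow R_M$, concluding by functoriality of the associated index morphisms. The only difference is that where the paper simply invokes ``functoriality of the index morphism'' for this triangle, you justify it concretely by noting that these immersions are \'etale open inclusions, so the deformation groupoid (\ref{defgroupoidext}) has trivial normal bundle and the index morphism (\ref{indexR}) collapses to the inclusion-induced map $j_!$ of Proposition \ref{opentwist}, which manifestly composes --- a legitimate and slightly more self-contained rendering of the same argument.
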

\begin{proof}
By functoriality of the pullback foliation construction (for submersions) we have that 
$F_X =g^*F_W=(\widetilde{f}\circ g)^*F_M$. 
The commutative diagram of immersions of groupoids 
\[
\xymatrix{
(\gr_X)_T \ar[rr]^-{\tilde{f}_T\circ \tilde{g}_T} \ar[dr]_-{\tilde{g}_T} && (\gr_M)_T  \\
& (\gr_W)_T \ar[ru]_-{\tilde{f}_T} }
\]
induces the commutative diagram of immersions of  central extensions
\[
\xymatrix{
R_X \ar[rr]^{\tilde{f}_-T\circ \tilde{g}_T} \ar[dr]_-{\tilde{g}_T} && R_M  \\
& R_W \ar[ru]_-{\tilde{f}_T}. }
\]
The functoriality of index morphism implies that the following commutative
\[
\xymatrix{
K^*((\gr_X)_T,( \tilde{f}_T\circ \tilde{g}_T)^*\a_T )  \ar[rr]^-{(\tilde{f}_T\circ \tilde{g}_T)_!} \ar[dr]_-{(\tilde{g}_T)_!} && K^*((\gr_M)_T, \a_T )   \\
& K^*((\gr_W)_T,( \tilde{f}_T)^*\a_T )  \ar[ru]_-{(\tilde{f}_T)_!} .}
\]
Together with the functoriality of Morita equivalences, we complete the proof
of $\tilde{f}_! \circ\tilde{g}_! =(\tilde{f} \circ\tilde{g})_!$.
\end{proof}


\begin{proposition}\label{CS47}
Let $(M/F,\sigma)$ be a twisted foliation, $g:W\longrightarrow M$ be a smooth submersion
and $F_W$ be the pull-back foliation on $W$. Denote 
$f=p_M\circ g$. Then 
\[
f_!= (p_M)! \circ g_!  : K^*(W, f^*\a +o_{F_W}) \to K^*(M/F,  \a)
\] 
with the degree shifted by the rank of $F_W$ modulo $2$.
\end{proposition}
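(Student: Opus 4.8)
The plan is to unwind both sides into compositions of analytic indices together with Thom and Morita isomorphisms, and then to invoke the twisted longitudinal index theorem (Theorem \ref{twistlongind}) in order to pass to the geometric (topological) indices, where the required factorization becomes transparent. First I would record the bookkeeping. Since $f=p_M\circ g$ we have $f^*\a=g^*\a_0$, and since $g$ is a submersion with $F_W=(dg)^{-1}(F)$, the differential yields a short exact sequence of real vector bundles over $W$
\[
0\to T_gW\to F_W\xrightarrow{\,dg\,} g^*F\to 0,
\]
where $T_gW=\ker dg$ is the vertical bundle. Hence $o_{F_W}=o_{T_gW}+g^*o_F$ and the source twisting splits as $f^*\a+o_{F_W}=g^*(\a_0+o_F)+o_{T_gW}$, which is exactly the source twisting of the Carey--Wang pushforward $g_!\colon K^*(W,g^*(\a_0+o_F)+o_{T_gW})\to K^*(M,\a_0+o_F)$ for the submersion of manifolds $g$; here $(p_M)_!\colon K^*(M,\a_0+o_F)\to K^*(M/F,\a)$ is the pushforward of Definition \ref{p_W} (with trivial associated generalized morphism). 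The degree shifts add up because $\mathrm{rank}\,F_W=\mathrm{rank}\,T_gW+\mathrm{rank}\,F$. By the definition of the pushforward for a submersion, $f_!=\tilde f_!\circ(p_W)_!$, so the statement reduces to the identity $\tilde f_!\circ(p_W)_!=(p_M)_!\circ g_!$.

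Next I would exploit the structural fact that, because $g$ is a submersion and $F_W=g^*F$, the holonomy groupoid of $(W,F_W)$ is (Morita equivalent to) the pullback groupoid $\gr_W\cong W\times_M\gr_M\times_M W$, whose Lie algebroid $F_W$ carries the exact sequence above. Both $(p_W)_!$ and $(p_M)_!$ are built from analytic indices of holonomy groupoids, so applying Theorem \ref{twistlongind} to $(W/F_W,\tilde f^*\a)$ and to $(M/F,\a)$ I would replace $\ind_{a,(W/F_W,\tilde f^*\a)}$ and $\ind_{a,(M/F,\a)}$ by the corresponding topological indices $\ind_t$ of Definition \ref{twistedtopindex}. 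The topological index is assembled from an auxiliary embedding, the Thom isomorphism and Bott periodicity in twisted K-theory, the Morita isomorphism, and the open-inclusion map $j_!$. Choosing the embedding $M\hookrightarrow\RR^{2m}$ first and then embedding $W$ into a tubular neighbourhood of $M$ inside $M\times\RR^{2k}\hookrightarrow\RR^{2m}\times\RR^{2k}$ by means of $g$, the transverse normal data for $F_W$ decomposes as the vertical normal data of $g$ followed by the normal data for $F$ in $\RR^{2m}$. The chain of Thom, Bott, Morita and $j_!$ maps computing $\ind_{t,(W/F_W,\tilde f^*\a)}$ then splits, compatibly with $\tilde f_!$ (Lemma \ref{tilde{f}}, which supplies the transverse identification of leaf spaces), into the chain computing $g_!$ followed by the chain computing $\ind_{t,(M/F,\a)}$; the orientation twistings match along this splitting precisely because of the exact sequence above, and functoriality of the index morphism for immersions (as already used for Proposition \ref{CS45}) glues the two halves.

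The main obstacle, and where the real care is required, is the consistent tracking of the twistings through this factorization: every map involved is defined via the $S^1$-central extensions $R_\a$, so one must verify that the decomposition of the topological index respects the $\ZZ$-grading under the $S^1$-action and selects the degree-one piece on both sides, and that the Carey--Wang Thom isomorphisms used for $g_!$ are identified, under the splitting $o_{F_W}=o_{T_gW}+g^*o_F$, with the Thom isomorphisms appearing in Definition \ref{twistedtopindex}. Granting this, the two chains of geometric isomorphisms coincide, giving $\tilde f_!\circ(p_W)_!=(p_M)_!\circ g_!$ and hence $f_!=(p_M)_!\circ g_!$ with the asserted degree shift $\mathrm{rank}\,F_W \bmod 2$. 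As an alternative to the embedding argument, I could instead realize $g_!$ as the analytic index of the submersion groupoid $W\times_M W$, that is, as a Thom-inverse morphism in the sense of Proposition \ref{Thomtwist}, and deduce the factorization of $\ind_{a,\gr_W}$ directly from the pullback description $\gr_W\cong W\times_M\gr_M\times_M W$ by an iterated deformation of the tangent groupoid, in the spirit of \cite{Ca4,Ca5}; the longitudinal index theorem then only enters to guarantee that this geometric factorization computes the same pushforward maps.
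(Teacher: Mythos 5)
Your proposal follows the paper's own route: reduce to $\tilde f_!\circ(p_W)_!=(p_M)_!\circ g_!$, strip off the Thom isomorphisms so that only the analytic indices remain, invoke the twisted longitudinal index theorem to replace them by the topological indices, and then verify that each geometric ingredient of the topological index (Bott, Thom, Morita, open inclusion) intertwines the maps induced by $g$. The only differences are cosmetic --- your particular choice of compatible embeddings and the explicit splitting $o_{F_W}=o_{T_gW}+g^{*}o_{F}$ --- and the functoriality checks you defer (``granting this'') are precisely the content of the paper's Steps 1--5.
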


\begin{proof} Consider the commutative diagram
\[\xymatrix{
 W \ar[d]^{g  } \ar[rr]^-{p_W  }  &&
 W/F_W \ar[d]^{\tilde{f}  }\\
 M \ar[rr]_-{ p_M  }  && M/F_M.
}
\]
By definition, $f_!=  \tilde{f}_!  \circ (p_W)_!$. 
We  shall establish the following  commutative diagram:
\begin{equation}
\xymatrix{
K^*(W,  f^*\sigma+o_{F_W})\ar[d]^{g_! }_{[\dim W-\dim M]} \ar[rr]^-{(p_W)_! }_{[rank F_W]}&&
K^*(\gr_W, \tilde{f}^*\sigma)\ar[d]^{\tilde{f}_! }\\
K^{*}(M,  \sigma_0 + o_{F} ) \ar[rr]^-{(p_M)_! } _{[rank F]}&&K^*(\gr_M,\sigma)
}
\end{equation}
where $[n]$ denotes the degree shifted by $n\mod 2$, note that  $rank F_W = \dim W-\dim M + rank F $. 

Recall  that $(p_W)_!$ and $(p_M)_!$ are defined as the composition of the Thom isomorphism  and  the analytic index morphism for $(W/F_W, \tilde f^*\a)$ and $(M/F,  \a)$ respectively, see Definition \ref{p_W}.   As the push-forward map in twisted K-theory for topological spaces is compatible with the Thom
isomorphism  (\cite{CW}), we only need to show that 
 the following diagram commutes:
\begin{equation}\label{diagB}
\xymatrix{
K^*(F^*_W,  \pi_{F_W}^* f^*\sigma)\ar[d]_-{(d_Fg)_! } \ar[rrr]^{\ind_{a, (W/F_W, \tilde f^*\a)}}&&&
K^*(\gr_W, \tilde{f}^*\sigma)\ar[d]^{\tilde{f}_! }\\
K^{*}(F^*,  \pi_{F }^*  \sigma   ) \ar[rrr]_-{\ind_{a, (M/F,  \a)}}&&&K^*(\gr_M, \sigma).
}
\end{equation}

By  the twisted index theorem  (Theorem \ref{twistlongind}), it suffices  to show   the 
 following functorial property for the twisted topological index morphisms:
\begin{equation}\label{diagB:t}
\xymatrix{
K^*(F^*_W,  \pi_{F_W}^* f^*\sigma)\ar[d]_-{(d_Fg)_! } \ar[rrr]^{\ind_{t, (W/F_W, \tilde f^*\a)}}&&&
K^*(\gr_W, \tilde{f}^*\sigma)\ar[d]^{\tilde{f}_! }\\
K^{*}(F^*,  \pi_{F }^*  \sigma   ) \ar[rrr]_-{\ind_{t, (M/F,  \a)}}&&&K^*(\gr_M, \sigma).
}
\end{equation}


 To unravel the definition of
twisted topological index (Definition \ref{twistedtopindex}), we   choose two simultaneous embeddings 
$M\stackrel{j_M}{\hookrightarrow} \RR^{2m}$ and $W\stackrel{j_W}{\hookrightarrow}\RR^{2m}$. 
Denote by $T_{M}$ and $T_W$ be open neighbourhood of  the zero section in the normal bundles to the foliations $F_M$ and $F_W$ in $\RR^{2m}$ respectively.   Using the notations in the definition of the twisted topological index and Theorem \ref{twistlongind},  we can identify 
$T_M$ and $T_W$ with open transversals of the foliations
$(M\times \RR^{2m}, \tilde{F})$ and $(M\times \RR^{2m}, \tilde{F})$ such that the following 
 diagram  commutes
 \[
\xymatrix{
T_W \ar[dd] \ar[rr]^{\subset} \ar[dr]_{\pi_{ W}}& &  U_W \ar[dr]^{\subset} \ar@{..>}[dd] &&  \\
&W \ar[dd] &  & W\times \RR^{2m} \ar[ll] \ar[dd] &\\
T_M \ar@{..>}[rr]^{\ \  \ \ \subset} \ar[dr]_{\pi_{ M}} &&   U_W \ar[dr]^{\subset} & &\\
& M  & & W\times \RR^{2m} \ar[ll] & .}
\]
Here $U_W$ and $U_M$ are small neighborhoods of $T_W$ and $T_M$ in 
$M\times \RR^{2m}$ and $ M\times \RR^{2m}$, and also identified with the corresponding normal bundles $N_W$ and $N_M$  respectively.  

Note that the twisted topological index is defined as 
the composition of  the following five homomorphisms, namely, Bott isomorphism and  Thom isomorphism in twisted K-theory of topological spaces, 
Morita isomorphism, functorial map for open embedding and Bott isomorphism in  twisted K-theory
of Lie groupoids.  We now show that  the twisted topological index  is functorial  through the following five steps.


 {\bf Step 1. } Bott isomorphism  in twisted K-theory of topological spaces is functorial. 
 
 Let $i_W:F^*_W\rightarrow F^*_W \oplus \RR^{2m} $ and $i_M :F^*_M \rightarrow F^*_M \oplus \RR^{2m} $  be  the obvious  zero sections. Then Bott isomoprhisms  are given by 
 $(i_W)_!$ and $(i_M)_!$ respectively (Cf. \cite{CW,TXL}). 
   The derivate $dg:TW\rightarrow TM$ induces $d g:F_W\rightarrow F_M$ since in this case  $F_W=Ker(q_M\circ dg)$ where $q_M:TM\rightarrow \tau_M=TM/F_M$, and $F_M=Ker\, q_M$. Letting $d g\oplus Id : F^*_W\oplus \RR^{2m}  \longrightarrow F^*_M\oplus \RR^{2m}$ be
   the K-oriented vector bundle morphism,  then the diagram
   \begin{equation}
   \label{diagI}
   \xymatrix{K^*(F_W,\pi_{F_W}^*(f^*\sigma)_0) \ar[d]_-{(d  g)_!}\ar[r]^-{Bott}&K^*(F_W\oplus \RR^m, p_{F_W}^*\pi_W^*(f^*\sigma)_0)\ar[d]^-{(d g\oplus Id )_!}\\
   K^*(F_M,\pi_{F_M}^*\pi_M^*\sigma_0)\ar[r]_-{Bott}&K^*(F_M\oplus \RR^m,\pi_M^*\sigma_0)}
   \end{equation}
    is  commutative due to  the functoriality of the pushforward maps in twisted K-theory established in  \cite{CW}.

{\bf Step 2. } Thom  isomorphism  in twisted K-theory of topological spaces is functorial. 

Denote by $T_W\stackrel{d_Tg}{\longrightarrow}T_M$ the induced vector bundle morphism which comes from  the short exact sequences of vector bundles over $W$ and $M$
 \[
 \xymatrix{
 0\ar[r]&F_W\ar[r]\ar[d]_{d g}&W\times \RR^{2m} \ar[d]_{g\times id}\ar[r]&T_W\ar[d]_{d_Tg}\ar[r]&0\\0\ar[r]&F_M\ar[r]&M\times \RR^{2m}\ar[r]&T_M\ar[r]&0
 }\]
Choose a bundle morphism  $h =d g\oplus d_Tg$ under suitable identifications 
 $W\times \RR^m\cong F_W\oplus T_W$ and $M\times \RR^m\cong  F_M\oplus T_M$.  
 Note that $F^*_W\oplus \RR^{2m}\cong  N_W\oplus N^*_W$ and $F^*_M\oplus \RR^{2m}\cong
  N_M\oplus N^*_M$ as total spaces of vector bundles over $T_W$ and $T_M$. 
 We have the following commutative diagrams of maps
  \[\xymatrix{
  F^*_W\oplus \RR^{2m} \ar[r]^{\cong}\ar[d]_{d g\oplus h}&N_W\oplus N^*_W \ar[d] &T_W\ar[l]^-{\iota}\ar[d]^{d_Tg}\\
  F^*_M\oplus \RR^{2m}\ar[r]^{\cong}& N_M\oplus N^*_M &T_M\ar[l]^-{\iota}
  }\]
  where 
  $\iota_W $ and $\iota_W $ are the  zero sections of the vector bundles $N_W\oplus N_W^* \to T_W$ and $N_M\oplus N_M^* \to T_M$.  Hence,  by  the functoriality of Thom isomorphism   in \cite{CW} and the fact that $\iota_!$ (in twisted K-theory) coincides with the Thom isomorphism in  twisted  K-theory (see again \cite{CW}), we have the following commutative diagram:\begin{equation}
  \label{diagII}
  \xymatrix{
  K^*(F^*_W\oplus \RR^{2m}, \pi_{F_W}^* (f^*\sigma)_0)\ar[r]^-{\cong}\ar[d]_-{(d g\oplus h)_!}&K^*(N_W\oplus N^*_W,\pi_N^*( \pi_W ^*f^*\a_0))  \ar[d] \ar[r]^-{Thom^{-1}}&K^*(T_W, \pi_W ^*f^*\a_0)\ar[d]^{(d_Tg)_!}
  \\
  K^*(F^*_M\oplus \RR^{2m},\pi_{F_M}^*\sigma_0)\ar[r]^-{\cong}&K^*(N_M\oplus N^*_M,\pi_N^*(\pi_M ^*\a_0)) \ar[r]_-{Thom^{-1}}&K^*(T_M,\pi_M ^*\a_0)}\end{equation}

{\bf Step 3. }  Morita isomorphism is functorial.
 In the definition of the topological indices we can choose the neighborhoods
  $U_W$ and $U_M$ small enough such that the \'etale (generalized) morphism 
  $\widetilde{f\times id }:\widetilde{\gr_W}\longrightarrow \widetilde{\gr_M}$ 
  restricts to an \'etale (generalized) morphism  $(\widetilde{f\times id})|_{U_W}:N_W\times_{T_W}N_W\longrightarrow N_M\times_{T_M}N_M$ (remember $U_W$ and $U_M$ are  chosen such that  $\widetilde{\gr}|_{U}=N\times_TN$ ). This last morphism can be equivalently described  (Definition-Proposition 1.1 in \cite{HS} or our discussion after definition \ref{p_W}) by a strict morphism of groupoids  between the \'etale groupoids obtained from the restriction  to some complete  transversals. In this case, using the complete transversals $T_W$ and $T_M$ to the foliated manifolds $(U_W,\widetilde{F_W}|_{U_W})$ and $(U_M,\widetilde{F_M}|_{U_M})$ respectively, it is easily seen that $(\widetilde{f\times id})|_{U_W}$ is given by $d_Tg:T_W\to T_M$ modulo the respective Morita equivalences $N\times_TN\stackrel{\mathscr{M}}{\longrightarrow}T$.

 By lemma \ref{tilde{f}}, the twisted K-theory morphism 
 \[
 (\widetilde{f}\times id)|_{U_W}!:K^*(\widetilde{\gr_W}|_{U_W},\widetilde{f}^*\sigma\circ p_W\circ j_W)\longrightarrow K^*(\widetilde{\gr_M}|_{U_M},\sigma\circ p_M\circ j_M)\]
  can be defined as  the morphism $(d_Tg)_!:
  K^*(T_W,\pi_W^* (f^*\a)_0)\longrightarrow K^*(T_M, \pi_M^*  \a_0)$ via the   induced morphisms by the Morita equivalences (independent of   the choice of complete transversals). In other words, the following diagram is commutative
  \begin{equation}
  \label{diagIII}
  \xymatrix{ K^*(T_W,\pi_W^* (f^*\a)_0) \ar[r]^-{Morita}_-{\cong}\ar[d]_{(d_Tg)_!}&K^*(N_W\times_{T_W}N_W, \pi_N^* \pi_W ^*(f^*\a)_0 ) \ar[r]^{\cong} & K^*(\widetilde{\gr_W}|_{U_W},\widetilde{f}^*\sigma\circ p_W\circ j_W) \ar[d]^-{((\widetilde{f\times id})|_{U_W})_!}\\ 
  K^*(T_M,\pi_M^*  \a_0)\ar[r]^-{Morita}_-{\cong}&K^*(N_M\times_{T_M}N_M,\pi_N^*  \pi_M ^*  \a _0)\ar[r]^{\cong} &  K^*(\widetilde{\gr_M}|_{U_M},\sigma\circ p_M\circ j_M), }\end{equation}

 {\bf Step 4. }  Push-forward map for open embeddings in Lie groupoids is functorial.

The morphism $\widetilde{f\times id}$ can be equivalently described by a strict morphism of groupoids 
 between the \'etale groupoids obtained from the restriction  to some complete  transversals and it does not depend on the choice of these. It is obvious that these complete transversals can be chosen such that their restriction to the $U$'s give complete transversals for the restricted foliations. Hence, by Lemma \ref{tilde{f}} and Proposition \ref{opentwist}, the following diagram is commutative 
 \begin{equation}\label{diagIV}
 \xymatrix{
 K^*(\widetilde{\gr_W}|_{U_W},\widetilde{f}^*\sigma\circ p_W\circ j_W)  \ar[r]^-{j_!}\ar[d]_-{((\widetilde{f}\times id)|_{U_W})_!}&K^*(\widetilde{\gr_W}\widetilde{f}^*\sigma\circ p_W)\ar[d]^-{(\widetilde{f}\times id)_!}\\K^*(\widetilde{\gr_M}|_{U_M}\sigma\circ p_M\circ j_M)\ar[r]_-{j_!}&K^*(\widetilde{\gr_M},\sigma\circ p_M)
 }\end{equation} 

{\bf Step 5. }  Bott isomorphism in twisted K-theory of   Lie groupoids   is functorial.
 
 For the final diagram to be commutative it is enough to observe that if $T$ is a complete transversal of $(W,F)$ then $T\times \RR^m$ is a complete transversal of $(W\times \RR^m,F\times \{0\})$. We apply Lemma \ref{tilde{f}} and Proposition \ref{Botttwist}  to obtain the commutative diagram
 \begin{equation}
 \label{diagV}
 \xymatrix{K^*(\widetilde{\gr_W},\widetilde{f}^*\sigma\circ p_W) \ar[r]^-{Bott^{-1}}\ar[d]_-{(\widetilde{f}\times id)_!}&K^*(\gr_W,\widetilde{f}^*\sigma)\ar[d]^-{\widetilde{f}_!}\\
 K^*(\widetilde{\gr_M},\sigma\circ p_M)\ar[r]_-{Bott^{-1}}&K^*(\gr_M,\sigma)}
 \end{equation} 

 Putting together the commutative diagrams (\ref{diagI}), (\ref{diagII}), (\ref{diagIII}), (\ref{diagIV}) and (\ref{diagV}), we establish  the functoriality for  the twisted topological index morphism. This completes the proof.
\end{proof}

We can now define the twisted pushforwards for smooth submersions.

\begin{definition} \label{push-submersioon}
The push-forward map $f_!: K^*(W, f^*\a + o_{TW \oplus f^*\nu_{F}}) \to K^*(M/F_M, \a  )$   for the submersion $f: W \to M/F$ is defined to  be the composition of the following maps
\begin{equation}
\xymatrix{
K^*(W, f^*\a + o_{TW \oplus f^*\nu_{F}}) \ar[r]^\cong & K^* (W, f^*\a + o_{F_W})
\ar[r]^{(p_W)_!} &K^*(W/F_W, \tilde f^*\a  ) \ar[r]^{\tilde{f}_!} & K^*(M/F_M,\a  ).
}
\end{equation}
\end{definition}
 
We want now to give a definition for any smooth generalized morphism. We will need for that purpose the following result.

 
\begin{proposition}\label{functorial}
Let $(M/F,\sigma)$ be a twisted foliation and $f: W\to M/F$ be a smooth map. Assume $f$ factors in two different ways:
\begin{equation}
\xymatrix{
&Z_1\ar[rd]^{g_1}&\\
W\ar[rd]_{j_2}\ar[ru]^{j_1}\ar[rr]^{f}&&M/F\\
&Z_2\ar[ru]_{g_2}&
}
\end{equation}
where $g_1$ and $ g_2$ are smooth submersions. Then $(g_1)_! \circ( j_1)_! =(g_2)_! \circ (j_2)_! $.
\end{proposition}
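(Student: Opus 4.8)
The plan is to dominate both factorizations by a single common one obtained as a fibered product, and then to conclude by the two functoriality results already established, Propositions \ref{CS45} and \ref{CS47}, together with the functoriality of the push-forward for manifolds of Carey-Wang \cite{CW}. Since $W$, $Z_1$, $Z_2$ are honest manifolds and only the target $M/F$ is a leaf space, and since $g_1$ and $g_2$ are submersions, I would first form the fibered product $Z:=Z_1\times_{M/F}Z_2$. Concretely, using the description of the $g_i$ by principal $\gr_M$-bundles (generalized morphisms), $Z$ is the fibered product of these bundles; because $g_1$ (resp.\ $g_2$) is a submersion, the base change $p_2:Z\to Z_2$ (resp.\ $p_1:Z\to Z_1$) is again a submersion of manifolds, and $G:=g_1\circ p_1=g_2\circ p_2:Z\to M/F$ is a submersion. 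The identity $g_1\circ j_1=f=g_2\circ j_2$ produces, via the universal property, a smooth map $k:W\to Z$ with $p_1\circ k=j_1$ and $p_2\circ k=j_2$; since $j_1,j_2$ are proper and oriented and the $p_i$ are separated submersions, $k$ is again proper, and its orientation data, as well as those of $G$, are inherited from $j_i,g_i$ through the evident short exact sequences of transverse normal bundles. One also checks that all twistings match up, since $f^*\sigma=k^*G^*\sigma=j_1^*g_1^*\sigma=j_2^*g_2^*\sigma$ and the orientation twistings combine compatibly.

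With this common factorization in place, the computation would run
\begin{equation}\nonumber
(g_1)_!\circ (j_1)_! = (g_1)_!\circ (p_1)_!\circ k_! = (g_1\circ p_1)_!\circ k_! = G_!\circ k_!,
\end{equation}
and symmetrically $(g_2)_!\circ (j_2)_!=G_!\circ k_!$, whence the desired equality. The middle step $(g_1)_!\circ (p_1)_!=(g_1\circ p_1)_!=G_!$ is the functoriality of the push-forward for composable submersions: viewing $p_1:Z\to Z_1=Z_1/F_{Z_1}$ (trivial foliation on $Z_1$) and $g_1:Z_1\to M/F$ as composable submersions of leaf spaces, this is precisely Proposition \ref{CS45} applied to the associated generalized morphisms, with Proposition \ref{CS47} used to identify the intermediate submersion push-forward with the analytic index of the corresponding fibration.

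The main obstacle is the first equality $(j_1)_!=(p_1)_!\circ k_!$, which mixes the two flavours of push-forward: $j_1$ and $k$ are proper oriented maps of manifolds, whose push-forwards are the Carey-Wang maps of \cite{CW}, whereas $(p_1)_!$ is the submersion push-forward of Definition \ref{push-submersioon}. To bridge them I would use that for a submersion $p_1$ of manifolds the pulled-back foliation $F_Z=p_1^*(0)=\ker dp_1$ is the vertical foliation, whose holonomy groupoid is Morita equivalent to $Z_1$; thus Definition \ref{push-submersioon} reduces to the twisted family index of the fibration $Z\to Z_1$, and by Proposition \ref{CS47} (together with the remark that our push-forward agrees with that of Carey-Wang when the foliation is a single leaf) $(p_1)_!$ coincides with the Carey-Wang push-forward of $p_1$. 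Granting this identification, the equality $(j_1)_!=(p_1)_!\circ k_!$ is exactly the functoriality of the Carey-Wang push-forward applied to the composition $W\stackrel{k}{\to}Z\stackrel{p_1}{\to}Z_1$ of oriented manifold maps, proved in \cite{CW}.

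In summary, the proof reduces the independence of the factorization to three functorialities that are already available — Proposition \ref{CS45} for submersions of leaf spaces, \cite{CW} for proper oriented maps of manifolds, and the compatibility furnished by Proposition \ref{CS47} between the submersion push-forward of Definition \ref{push-submersioon} and the manifold push-forward — glued along the fibered product $Z$. The delicate points to verify carefully, beyond the routine base-change statements, are that the fibered product is a manifold with the claimed submersion projections, that $k$ is proper with inherited orientation, and the bridging identification of $(p_1)_!$ with the Carey-Wang push-forward; I expect this last point to require the most care.
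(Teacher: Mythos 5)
Your argument is correct and follows essentially the same route as the paper: the paper likewise reduces to the fibered product $Z=Z_1\times_{M/F}Z_2$ (citing \cite{CS}, \cite{BH} for its smooth structure and the submersion projections), invokes the Carey--Wang functoriality for the manifold maps into $Z$, and handles the composable-submersion step by combining Propositions \ref{CS45} and \ref{CS47} through the factorization over the intermediate leaf space. The only organizational difference is that the paper first isolates the case where both $j_i$ are submersions, proving it via the equality $\tilde{g}_1\circ\tilde{h}_1=\tilde{f}=\tilde{g}_2\circ\tilde{h}_2$ of generalized morphisms out of $W/F_W$, and then feeds that lemma into the fibered-product argument, whereas you merge the two steps; your ``bridging identification'' of $(p_1)_!$ with the Carey--Wang push-forward is exactly what that submersion case supplies.
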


\begin{proof}
First, let us assume $j_i $ and $j_2$ are submersions. Consider the factorization of $g_1=\widetilde{g_1}\circ p_{Z_1}$. 
Putting  $h_1 = p_{Z_1}\circ j_1$,  we have a commutative diagram
\[
\xymatrix{
W\ar[rr]^{j_1}\ar[d]_{p_W}\ar[rd]_{h_1}&&Z_1\ar[d]^{g_1}\ar[dl]^{p_{Z_1}}\\
W/F_W\ar[r]_{\widetilde{h}_1}&Z_1/F_{Z_1}\ar[r]_{\widetilde{g_1}}&M/F.
}
\]
By Propositions \ref{CS47} and \ref{CS45} , we have $ (h_1)_! =  (\tilde{h}_1)_! \circ (p_W)_! =    (p_{Z_1})_! 
\circ (j_1)_! $   and  
$(\widetilde{g_1})_! \circ (\widetilde{h}_1)! =
(\widetilde{g_1} \circ  \widetilde{h}_1)_! $.
Hence, 
 \[
(g_1)_! \circ( j_1)_! = (\tilde{g}_1)_! \circ (p_{Z_1})_! 
\circ (j_1)_! =   (\tilde{g}_1)_!  \circ ( \tilde{h}_1)_! \circ (p_W)_!  = (\widetilde{g_1} \circ\widetilde{h}_1)_! 
\circ (p_W)_! .\]

Similarly, the commutative diagram
\[
\xymatrix{
W\ar[rr]^{j_1}\ar[d]_{p_W}\ar[rd]_{h_2}&&Z_2\ar[d]^{g_1}\ar[dl]^{p_{Z_2}}\\
W/F_W\ar[r]_{\widetilde{h}_2}&Z_2/F_{Z_2}\ar[r]_{\widetilde{g_1}}&M/F.
}
\]
implies that 
\[
(g_2)_! \circ( j_2)_! =    (\widetilde{g_2} \circ\widetilde{h}_2)_! 
\circ (p_W)_! .\] 
Then  $(g_1)_! \circ( j_1)_! =(g_2)_! \circ (j_2)_! $ follows $\widetilde{g_1} \circ\widetilde{h}_1 = \widetilde{g_2} \circ\widetilde{h}_2$.

For the general case, recall that Connes and Skandalis construct in \cite{CS} the following commutative diagram
\begin{equation}
\xymatrix{
&Z_1\ar[rd]^{g_1}&\\
W\ar[rd]_{j_2}\ar[ru]^{j_1}\ar[r]^{j}&Z\ar[u]_{\pi_1}\ar[d]^{\pi_2}\ar[r]^g&M/F\\
&Z_2\ar[ru]_{g_2}&
}
\end{equation}
where $Z$ is a smooth manifold, $j$ is  a smooth map and $\pi_1$, $\pi_2$ and $g$  are smooth submersions. The manifold $Z$ corresponds to the fibered product of $Z_1$ and $Z_2$ over $M/F$ (see also \cite{BH} for further discussions). The desire equality follows immediately from the wrong way functoriality in twisted K-theory for manifolds proved in \cite{CW} and the submersion case treated above.
\end{proof}

We can give the definition of the push-forward map for any smooth map $f: W\to  M/F$ where
$M/F$ is equipped with a twisting $\a$.  

\begin{definition}\label{Main:def}
Let $(M/F,\sigma)$ be a twisted foliation and $f:W\longrightarrow M/F$ be any smooth map. We define  
\[
f_!:   K^{*}(W, f^*\a_0 + o_{TW \oplus  f^*\nu_F})\longrightarrow K^*(M/F,\sigma)
\]
 to be the composition 
$g_! \circ j_! $ for any factorization $f=g\circ j$ through $g:Z\longrightarrow M/F$ a smooth submersion. Here the push-forward map \[
 j_!:  
  K^*(W, f^*\a_0 + o_{TW \oplus  f^*\nu_F}) \longrightarrow  K^*( Z,g^*\a + o_{TZ \oplus g^*\nu_F}).
  \] 
  is  established in \cite{CW}, and the push-forward map $g_!$ for the submersion $g$ is defined in
  Definition \ref{push-submersioon}, with the possible shift on degree (see also definition \ref{p_W}). 
\end{definition}

 The main result of this section can be now stated.

\begin{theorem}\label{twistfunctoriality}
The push-forward morphism is  functorial, that means, if we have a composition of smooth maps
\begin{equation}
X\stackrel{g}{\longrightarrow}W\stackrel{f}{\longrightarrow}M/F,
\end{equation}
and a twisting  $\xymatrix{\sigma:M/F \ar@{-->}[r] &PU(H)}$,  then the following diagram commutes
\[
\xymatrix{
K^{* }(X,(f\circ g)^*\sigma+ o_{TX\oplus ( f\circ g)^*\nu_F}  ) \ar[rr]^-{(f\circ g)_!}\ar[rd]_-{g_!}&&K^*(M/F,\sigma)\\
&K^{*}(W,  f^*\sigma+ o_{TW \oplus f^*\nu_F} )\ar[ru]_-{f_!}&
}
\]
\end{theorem}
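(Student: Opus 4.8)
The plan is to reduce the functoriality over the leaf space $M/F$ to the already-established wrong-way functoriality of the Carey--Wang push-forward between manifolds \cite{CW}, exploiting the independence of $f_!$ on the chosen factorization guaranteed by Proposition \ref{functorial}.

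First I would fix a factorization of $f$ of the kind used in Definition \ref{Main:def}: write $f=\hat f\circ j$ with $\hat f\colon Z\to M/F$ a smooth submersion and $j\colon W\to Z$ a smooth map of manifolds, the existence of such a factorization being guaranteed by the foliation microbundle construction of \cite{Cosf,BH}. With this choice Definition \ref{Main:def} reads $f_!=\hat f_!\circ j_!$, where $j_!$ is the manifold push-forward of \cite{CW} and $\hat f_!$ is the submersion push-forward of Definition \ref{push-submersioon}.

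Next I would observe that the composite admits the factorization $f\circ g=\hat f\circ(j\circ g)$, in which $\hat f$ is still a submersion onto $M/F$ while $j\circ g\colon X\to Z$ is now a smooth map of manifolds. This is exactly a factorization of $f\circ g$ through a submersion in the sense of Definition \ref{Main:def}, so evaluating the push-forward on it gives $(f\circ g)_!=\hat f_!\circ(j\circ g)_!$. The right to evaluate $(f\circ g)_!$ on this particular factorization rather than any other is precisely the content of the well-definedness result, Proposition \ref{functorial}. It then remains to compare $(j\circ g)_!$ with $j_!\circ g_!$; but both are push-forwards between \emph{manifolds}, so their equality is exactly the wrong-way functoriality of Carey--Wang \cite{CW}. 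Combining the three identities yields
\[
(f\circ g)_!=\hat f_!\circ(j\circ g)_!=\hat f_!\circ j_!\circ g_!=(\hat f_!\circ j_!)\circ g_!=f_!\circ g_!,
\]
which is the assertion of the theorem.

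The one point demanding care, and the main technical obstacle, is the bookkeeping of twistings and of the mod-$2$ degree shifts along the two routes. I would check that the source twisting of $g_!$, namely $g^*\bigl(f^*\sigma+o_{TW\oplus f^*\nu_F}\bigr)+o_{TX\oplus g^*TW}$, agrees with $(f\circ g)^*\sigma+o_{TX\oplus(f\circ g)^*\nu_F}$, while its target twisting $f^*\sigma+o_{TW\oplus f^*\nu_F}$ matches the source of $f_!$. This uses the additivity $o_A+o_B=o_{A\oplus B}$ of orientation twistings under direct sum and the triviality of $o_{g^*TW\oplus g^*TW}$, which holds because $V\oplus V\cong V\otimes_{\RR}\CC$ carries a canonical $Spin^c$-structure. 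The corresponding additivity of the degree shifts is already encoded in the manifold statement of \cite{CW}, since $\dim X-\dim Z\equiv(\dim X-\dim W)+(\dim W-\dim Z)\pmod 2$. Once these compatibilities are recorded, the displayed chain of equalities is purely formal.
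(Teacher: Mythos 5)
Your argument is correct and coincides with the paper's own proof: both fix a factorization $f=q\circ j$ of $f$ through a submersion $q\colon Z\to M/F$, observe that $f\circ g=q\circ(j\circ g)$ is then a factorization of the composite through the same submersion, and conclude by combining the independence of the choice of factorization (Proposition \ref{functorial}) with the Carey--Wang wrong-way functoriality applied to the manifold maps $g$ and $j$. Your explicit bookkeeping of the orientation twistings and mod-$2$ degree shifts is a useful supplement to what the paper leaves implicit.
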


\begin{proof} Choose a factorization of   $f:W\longrightarrow M/F$  as $W \stackrel{j}{\longrightarrow}Z \stackrel{q}{\longrightarrow}M/F$ where $q$ is a submersion, then 
$f\circ g = (g\circ j)  \circ q$ is a  factorization of   $f \circ g$.   Then the claim follows 
Proposition  \ref{functorial} and the functorial property of the push-forward map in \cite{CW}. \end{proof}

We remark that when $\sigma$ is trivial and $f: W\to M/F$ is K-oriented (that is  $TW\oplus \nu_F$ is K-oriented), then
 our push-forward map  in Definition \ref{Main:def}  agrees with the one constructed in \cite{CS}. Also, when the foliation consists on a single leaf (hence a manifold) but the twisting is not necessarily trivial, we obtain otherwise the push-forward map defined in \cite{CW} by Carey-Wang.

\section{Further developpements}
 
It is very natural now to use the wrong way functoriality studied in the last section to construct an assembly map adapted to our twisted situation. Indeed, it is possible to adapt to foliations the twisted geometric K-homology introduced in \cite{W08} by the second author. 
 
As indicated in \cite{CaWangCRAS}, we will obtain a twisted assembly map 
\begin{equation}
\mu_{\sigma}:K_{*}^{geo}(M/F,\sigma+o_{\nu_F})\longrightarrow K^*(M/F,\sigma). 
\end{equation} 
There are two very interesting particular cases: the first is when $\sigma$ is trivial, in this case we recover the assembly map considered by Connes-Skandalis:
$$\mu_{\tau}:K_{*}^{geo}(M/F, o_{\nu_F})\longrightarrow K^*(M/F),$$
and the second is when $\sigma=o_{\nu_F}$, in this case we obtain the following assembly map:
$$\mu:K_{*}^{geo}(M/F)\longrightarrow K^{*} (M/F, o_{\nu_F}).$$ 

The situation here is more subtle than the untwisted case since we have not developped the appropriate pseudodifferential calculus and/or we have not discussed the construction of analytic elements from (twisted) correspondences. In particular the understanding of these subjects will lead us to prove and understand the bordism invariant of our twisted indices, and then, to understand how the assembly map fits into some kind of $S^1$-equivariant assembly map (we already saw that our indices are naturally factors of a $S^1$-equivariant index). We will discuss these topics in detail in a forthcoming paper.

\bibliographystyle{plain}
\bibliography{bibliographie}

\end{document}